\newtheorem{lemma}{Lemma}[section]
\newtheorem{theorem}[lemma]{Theorem}
\newtheorem{proposition}[lemma]{Proposition}
\newtheorem{claim}[lemma]{Claim}
\newtheorem{corollary}[lemma]{Corollary}
\theoremstyle{remark}
\newtheorem{remark}{Remark}
\theoremstyle{definition}
\newtheorem{definition}[lemma]{Definition}
\DeclareMathOperator*{\Pp}{{\mathrm P}}
\DeclareMathOperator*{\1}{\mathbbm{1}}
\def\E{\mathrm{E}}
\def\Ent{\mathrm{Ent}}
\def\var{\mathrm Var}
\def\gap{\textsc{GAP}}
\def\alphamin{\alpha_{min}}
\def\potts{\mu_{\mathrm{\footnotesize{Potts}}}}
\def\ising{\mu_{\mathrm{\footnotesize{Ising}}}}
\def\Tcoup{T_{\mathrm coup}}
\renewcommand{\epsilon}{\varepsilon}
\newcommand{\R}{\mathbb{R}}
\date{\today}
\begin{document}

\author{
	Antonio Blanca
	\thanks{Pennsylvania State University.
		Email: ablanca@cse.psu.edu.
		Research supported in part by NSF grant CCF-2143762.}
	\and\
	Xusheng Zhang\thanks{Pennsylvania State University.
		Email: xushengz@psu.edu.
		Research supported in part by NSF grant CCF-2143762.}
}
\title{Rapid mixing of global Markov chains via spectral independence: the unbounded degree case}

\maketitle

\begin{abstract}

We consider spin systems on general $n$-vertex graphs of unbounded degree and explore the effects of spectral independence on the rate of convergence to equilibrium of {global} Markov chains. Spectral independence is a novel way of quantifying the decay of correlations in spin system models, which has significantly advanced the study of Markov chains for spin systems. 
We prove that whenever spectral independence holds, the popular Swendsen--Wang dynamics for the $q$-state ferromagnetic Potts model
on graphs of maximum degree $\Delta$, where $\Delta$ is allowed to grow with $n$, 
converges in $O((\Delta \log n)^c)$ steps where $c > 0$ is a constant independent of $\Delta$ and $n$.
We also show a similar mixing time bound for the block dynamics of general spin systems, again assuming that spectral independence holds.
Finally, for \emph{monotone} spin systems such as the Ising model and the hardcore model on bipartite graphs, we show that spectral independence implies that the mixing time of the systematic scan dynamics is $O(\Delta^c \log n)$ for a constant $c>0$ independent of $\Delta$ and $n$.
Systematic scan dynamics are widely popular but are notoriously difficult to analyze. 
Our result implies optimal $O(\log n)$ mixing time bounds for any systematic scan dynamics of the ferromagnetic Ising model on general graphs up to the tree uniqueness threshold.
Our main technical contribution is an improved factorization of the entropy functional: this is the common starting point for all our proofs.
Specifically, we establish the so-called $k$-partite factorization of entropy 
with a constant that depends polynomially on the maximum degree of the graph.

\end{abstract}

\thispagestyle{empty}

\newpage

\setcounter{page}{1}

\section{Introduction}

Spectral independence is a powerful new approach for quantifying the decay of correlations in spin system models. Initially introduced in \cite{ALO20}, this condition has revolutionized the study of Markov chains
for spin systems. In a series of important and recent contributions, spectral independence has been shown to be instrumental in determining the convergence rate of the Glauber dynamics, the simple single-site update Markov chain that updates the spin at a randomly chosen vertex in each step.

The first efforts in this series (see \cite{ALO20,CLV20, CLV21}) showed that spectral independence implies optimal $O(n \log n)$ mixing of the Glauber dynamics on $n$-vertex graphs of bounded degree for general spin systems. The unbounded degree case was studied in~\cite{CFYZ22a,CFYZ22b,AJKPV22,JPV21}, while~\cite{BCCPSV22} explored the effects of this condition on the speed of convergence of global Markov chains (i.e., Markov chains that update the spins of a large number of vertices in each step) in the bounded degree setting.
Research exploring the applications of spectral independence is ongoing.
We contribute to this line of work by investigating how spectral independence affects the speed of convergence of \emph{global Markov chains} for general spin systems on graphs of unbounded degree.

A \emph{spin system} is defined on a graph $G=(V,E)$.
There is a set~$\mathcal S = \{1,\dots,q\}$ of spins or colors, and configurations are assignments of spin values from $\mathcal S$
to each vertex of $G$.
The probability of a configuration~$\sigma \in \mathcal S^V$ is given by the  Gibbs
distribution:
\begin{equation}\label{eqn:gibbs}
\mu(\sigma) =\frac{e^{-H(\sigma)}}{Z},
\end{equation}
where the normalizing factor $Z$ is known as the partition function, and the Hamiltonian~$H: \mathcal S^V \rightarrow \R$
contains terms that depend on the spin values at each vertex 
(a ``vertex potential'' or ``external field'') and
at each pair of adjacent vertices (an ``edge potential''); 
see Definition \ref{def:spin-system}.
A widely studied spin system, and one that we will pay close attention to in this paper, is the ferromagnetic Potts model, where for a real parameter $\beta > 0$, associated with inverse temperature in physical applications, the Hamiltonian is given~by:
\[
H(\sigma) = -\beta \sum_{\{u,v\} \in E}  \1(\sigma_u = \sigma_v).
\]
The classical ferromagnetic Ising model corresponds to the $q=2$ case. 
(In this variant of the Potts model,
the Hamiltonian only includes edge potentials, and 
there is no external field.)
We shall use $\ising$ and $\potts$ for the Gibbs distributions corresponding to the Ising and Potts models. Other well-known, well-studied spin systems include uniform proper colorings and the hardcore model.

Spin systems provide a robust framework for studying interacting systems of simple elements and have a wide range of applications in computer science, statistical physics, and other fields. In such applications, generating samples from the Gibbs distribution \eqref{eqn:gibbs} is a fundamental computational task and one in which Markov chain-based algorithms have been quite successful. A long line of work dating back to the 1980s relates the speed of convergence of Markov chains to various forms of decay of correlations in the model. Spectral independence, defined next, captures the decay of correlations in a novel way.

Roughly speaking, spectral independence holds when the spectral norm of a ``pairwise'' influence matrix is bounded.
To formally define it, let us begin by introducing some notations.
Let $\Omega \subseteq \mathcal S^V$ be the support of $\mu$: the set of configurations $\sigma$ such that $\mu(\sigma) > 0$.
A \emph{pinning} $\tau$ on a subset of vertices $\Lambda \subseteq V$ is a fixed partial configuration on $\Lambda$;
i.e., a spin assignment from $\mathcal S^\Lambda$ to the vertices of $\Lambda$. 
For a pinning $\tau$ on $\Lambda \subseteq V$ and $U \subseteq V\setminus\Lambda$, we let 
$\Omega_{U}^\tau = \{\sigma_U \in \mathcal S^U: \mu(\sigma_U \mid \sigma_\Lambda = \tau ) > 0\}$ 
be the set of partial configurations on $U$ that are consistent with the pinning $\tau$.
We write $\Omega^\tau_u = \Omega^\tau_{\{u\}}$ if $u$ is a single vertex.
Let
\[
\mathcal P^\tau := \{(u, s): u\notin \Lambda, s\in \Omega^\tau_u \}
\] 
denote the set of consistent vertex-spin pairs in $\Omega^\tau_{V\setminus \Lambda}$ under $\mu$.
For each $\Lambda \subseteq V$ and pinning $\tau$ on $\Lambda$, 
we define the \emph{signed pairwise influence matrix} $\Psi^\tau_\mu \in \mathbb{R}^{\mathcal P^\tau \times \mathcal P^\tau}$ 
to be the matrix with entries: 
\[
\Psi^\tau_\mu((u,a), (v,b)) = \mu(\sigma_v = b \mid \sigma_u = a, \sigma_\Lambda = \tau) 
- \mu(\sigma_v = b \mid  \sigma_\Lambda = \tau)
\]  
for $u \neq v$, and $\Psi^\tau_\mu((u,a), (u,b)) = 0$ otherwise.

\begin{definition}[Spectral Independence]
\label{def:si}
A distribution $\mu$ satisfies \emph{$\eta$-spectral independence} if 
for every subset of vertices $\Lambda \subseteq V$ and every pinning $\tau\in \Omega_\Lambda$, 
the largest eigenvalue of
the signed pairwise influence matrix $\Psi^\tau_\mu$, denoted $\lambda_1(\Psi^\tau_\mu)$, satisfies
$
\lambda_1(\Psi^\tau_\mu) \le \eta.
$
\end{definition}
\noindent
There are several definitions of spectral independence in the literature;
we use here the one from \cite{CGSV21}. 

We show that spectral independence implies new upper bounds on the mixing time of several well-studied {global} Markov chains in the case where the maximum degree $\Delta$ of the underlying graph $G=(V,E)$ is unbounded; i.e., $\Delta \rightarrow \infty$ with $n$. 
The mixing time is defined as the number of steps required for a Markov chain to reach a distribution close in total variation distance to 
its stationary distribution, assuming a worst possible starting state; a formal definition is given in Section~\ref{sec:tmix}.
The global Markov chains we consider include the Swendsen--Wang dynamics for the ferromagnetic $q$-state Potts, the systematic scan dynamics for monotone spin systems, and the block dynamics for general spin systems. These three dynamics are among the most popular and well-studied global Markov chains and present certain advantages (e.g., faster convergence and amenability to parallelization) to the Glauber dynamics.

\subsection{The Swendsen--Wang dynamics}
A canonical example of a global Markov chain is the Swendsen--Wang (SW) dynamics for the ferromagnetic $q$-state Potts model.
The SW dynamics transitions from a configuration $\sigma_t$ to $\sigma_{t+1}$ by:
\begin{enumerate}
    \item For each edge $e = \{u,v\}\in E$, if $\sigma_t(u) = \sigma_t(v)$, independently include $e$ in the set $A_t$ with probability~$p = 1 - e^{-\beta}$;
    \item Then, independently for each connected component $\mathcal C$ in $(V,A_t)$, draw a spin $s \in \{1, \dots ,q\}$ uniformly at random and set $\sigma_{t+1}(v)= s$ for all $v\in \mathcal C$. 
\end{enumerate}
The SW dynamics is ergodic and reversible with respect to $\potts$ and thus converges to it.  
This Markov chain originated in the late 1980s~\cite{SW} as an alternative to the Glauber dynamics, which mixes exponentially slowly at low temperatures (large $\beta$).
The SW dynamics bypasses the key barriers that cause the slowdown of the Glauber dynamics at low temperatures.
For the Ising model ($q=2$), for instance, it was recently shown to converge in $\mathrm{poly}(n)$ steps on any $n$-vertex graph for any value of $\beta > 0$~\cite{GuoJer}. (The conjectured mixing time is $\Theta(n^{1/4})$, but we seem to be far from proving such a conjecture.)
For $q \ge 3$, on the other hand, the SW dynamics can converge exponentially slowly at certain  ``intermediate'' temperatures regimes corresponding to first-order phase transitions; see~\cite{GoJe,BCT,GL1,GLP,PottsRGMetastabilityCMP}.

Recently, $\eta$-spectral independence (with $\eta = O(1)$) was shown to imply that the mixing time of the SW dynamics is $O(\log n)$ on graphs of maximum degree $\Delta = O(1)$, i.e.,  bounded degree graphs~\cite{BCCPSV22}. This mixing time bound is optimal since the SW dynamics requires $\Omega(\log n)$ steps to mix in some cases where $\eta$ and $\Delta$ are both $O(1)$~\cite{BCPSV21,BCSV22}. However, it does not extend to the unbounded degree setting since the constant factor hidden by the big-$O$ notation depends exponentially on the maximum degree $\Delta$; this is the case even when $\eta = O(1)$ and $\beta \Delta = O(1)$.
Our first result provides a mixing time bound that depends only polynomially on $\Delta$. 

\begin{theorem}
	\label{thm:SW:intro}
    Let $q\ge 2$, $\beta > 0$, $\eta$ > 0 and $\Delta \ge 3$.
    Suppose $G= (V,E)$ is an $n$-vertex graph of maximum degree $\Delta$. 
	Let $\potts$ be the Gibbs distribution of the $q$-state ferromagnetic Potts model on $G$ with parameter $\beta$. 
	If  $\potts$ is $\eta$-spectrally independent with $\eta =O(1)$ and $\beta \Delta = O(1)$, then
    there exists a constant $c > 0$ such that
    the mixing time of the SW dynamics satisfies
	$T_{mix}(P_{SW}) = O\big((\Delta \log n)^{c} \big).$
\end{theorem}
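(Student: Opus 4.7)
The plan is to reduce the mixing time of the Swendsen--Wang (SW) dynamics to an entropy-contraction estimate coming from a new $k$-partite factorization of entropy for $\potts$. Concretely, the first step is to establish, under the hypotheses of the theorem, an inequality of the form
\[
\Ent_\mu(f) \;\le\; C(\Delta,\eta,\beta) \sum_{i=1}^{k} \mu\!\left[\Ent_\mu\!\left(f \mid \sigma_{V\setminus V_i}\right)\right]
\]
valid for $\mu = \potts$ and every partition $V = V_1 \sqcup \cdots \sqcup V_k$ into independent sets of $G$, with the crucial quantitative feature that $C(\Delta,\eta,\beta) = \mathrm{poly}(\Delta)$ when $\eta = O(1)$ and $\beta\Delta = O(1)$. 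This is the ``improved $k$-partite factorization of entropy'' announced in the abstract and replaces the constant of order $e^{\Theta(\Delta)}$ implicit in the bounded-degree analysis of \cite{BCCPSV22}.

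Second, I would take such a partition via a greedy proper coloring of $G$, which exists with $k \le \Delta + 1$. Because each $V_i$ is an independent set, conditional on $\sigma_{V\setminus V_i}$ the spins on $V_i$ are mutually independent, so the right-hand side decomposes into single-vertex conditional entropies that are trivially controlled. Third, to pass from the factorization to an SW mixing bound, I would follow the template of \cite{BCCPSV22}: use the Edwards--Sokal coupling to view one SW step as a two-block heat-bath update on the joint (spin, edge) configuration, and then compare SW to the $k$-block dynamics whose entropy decay rate is exactly what the factorization controls. A Dirichlet-form comparison (canonical paths), or equivalently a direct coupling that simulates a sequence of block moves inside a single SW step, transfers the modified log-Sobolev constant from the block chain to SW at the cost of only a $\mathrm{poly}(\Delta)$ factor. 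Since $\Ent_\mu(f_0) \le \log(1/\mu_{\min}) = O(n\log q)$ for any starting distribution, iterating the entropy contraction $1 - 1/\mathrm{poly}(\Delta)$ yields $T_{mix}(P_{SW}) = O((\Delta\log n)^c)$.

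The main obstacle is the first step: proving the factorization with a constant only polynomial in $\Delta$. The usual bounded-degree local-to-global induction that turns spectral independence into entropy factorization loses an $e^{O(\Delta)}$ factor each time it exposes the boundary of a vertex or a block, and this loss is fatal when $\Delta \to \infty$ with $n$. To beat it, I would adapt the field-dynamics and localization-scheme machinery developed for unbounded-degree Glauber mixing in \cite{CFYZ22a,CFYZ22b,AJKPV22,JPV21}, applying it to blocks rather than to single vertices, and combine it with spectral independence under arbitrary pinnings. The hypothesis $\beta\Delta = O(1)$ is what keeps the cross-block edge interactions at $O(1)$ per vertex, so that each step of the induction costs only a polynomial factor in $\Delta$ rather than an exponential one.
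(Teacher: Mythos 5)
Your overall architecture is the same as the paper's: establish a $k$-partite factorization of entropy (KPF) with a constant that is only polynomial in $\Delta$, partition $V$ into at most $\Delta+1$ (in the paper, $\chi \le \Delta$) independent sets, then pass from KPF to the Swendsen--Wang dynamics through the Edwards--Sokal ``edge-spin'' factorization and the known entropy-decay lemma for SW, and finally convert relative entropy decay into a mixing time via $\Ent \le \log(1/\mu_{\min})$. Those reduction steps are indeed quoted from prior work in the paper (KPF $\Rightarrow$ edge-spin factorization with cost $O(\beta\Delta k e^{\beta\Delta})$, and edge-spin factorization $\Rightarrow$ entropy decay for SW), so your plan is sound up to that point, although your suggestion of a Dirichlet-form/canonical-paths comparison is not how the transfer is done (and transferring a modified log-Sobolev constant by canonical paths is not a standard move); the entropy-factorization route you also mention is the one that actually works.

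The genuine gap is in the only step that is new, namely the KPF with a constant that does not blow up exponentially in $\Delta$. You acknowledge this is the main obstacle, but your proposed remedy --- adapting the field-dynamics/localization-scheme machinery of \cite{CFYZ22a,CFYZ22b,AJKPV22,JPV21} ``to blocks rather than single vertices'' --- is left entirely unspecified, and it is far from clear it can be made to work: that machinery is built for single-site dynamics and typically needs stronger inputs (e.g., spectral/entropic independence under arbitrary external fields, or model-specific structure) than the bare $\eta$-spectral independence hypothesized here. The paper's actual mechanism is different and much more elementary: apply $\lceil\theta n\rceil$-uniform block factorization with $\theta \asymp 1/(e\Delta)$, which spectral independence gives at cost only $(e/\theta)^{\lceil 2\eta/b\rceil} = \mathrm{poly}(\Delta)$; observe that a uniformly random set of density $\theta$ shatters into connected components whose sizes have exponential tails, so the largest component met is $O(\log n)$ with high probability; and inside each small component invoke a \emph{crude} KPF bound obtained by combining the Glauber spectral gap implied by spectral independence and bounded marginals (polynomial in the component size, hence $\mathrm{polylog}(n)$) with a spectral-gap-to-KPF conversion. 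A careful entropy bookkeeping over components (the product structure of the conditional measure on the shattered set) then assembles these local factorizations into the global KPF with constant $(\Delta\log n)^{O(1)}$. Your write-up contains neither this multi-scale idea nor a workable substitute, so as it stands the heart of the theorem is not proved. Also, your reading of where $\beta\Delta = O(1)$ enters is slightly off: it is used to keep the marginal-boundedness parameter $b$ (and hence the exponent $\kappa = 2+\lceil 2\eta/b\rceil$) of constant order and to absorb the $e^{\beta\Delta}$ factor in the edge-spin step, not to control ``cross-block interactions per induction step.''
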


\noindent
The constant $c$ has a near linear dependency on $\eta$ and $\beta \Delta$; a more precise statement of Theorem~\ref{thm:SW:intro} with a precise expression for $c$ is given in~Theorem~\ref{thm:SW}.

Despite the expectation that the SW dynamics mixes in $O(\log n)$ steps in weakly correlated systems (i.e., when $\beta \Delta$ is small), proving sub-linear upper bounds on its mixing time has been difficult.
Recently, 
various forms of decay of correlation (e.g., strong spatial mixing, entropy mixing, and spectral independence)
have been used to obtain $O(\log n)$ bounds 
for the mixing time of the SW dynamics
on cubes of the integer lattice graph $\mathbb{Z}^d$, regular trees, and general graphs of bounded degree (see~\cite{BCPSV21,BCSV22,BCCPSV22}). 
However, for graphs of large degree, i.e., with $\Delta \rightarrow \infty$ with $n$, the only sub-linear mixing time bounds known either hold for the very distinctive mean-field model, where $G$ is the complete graph~\cite{GSVmf,BSmf}, or hold for very small values of $\beta$; i.e., $\beta \lesssim 1/(3\Delta)$~\cite{huber2003bounding}. 
Our results provide new sub-linear mixing time bounds for graph families of sub-linear maximum degree, provided $\eta =O(1)$ and $\beta \Delta = O(1)$. These last two conditions go hand-in-hand: in all known cases where $\eta = O(1)$, we also have $\beta \Delta = O(1)$.

On graphs of degree at most $\Delta$, $\eta$-spectral independence is supposed to hold with $\eta = O(1)$ whenever $\beta < \beta_u(q,\Delta)$, where $\beta_u(q,\Delta)$ is the threshold for the uniqueness/non-uniqueness phase transition on $\Delta$-regular trees. 
This has been confirmed for the Ising model ($q=2$) but not for the Potts model. Specifically, for the ferromagnetic Ising model, we have $\beta_u(2,\Delta) = \ln \frac{\Delta}{\Delta-2}$, and when $\beta \le (1-\delta)\beta_u(2,\Delta)$ for some $\delta \in (0,1)$, $\ising$ is $\eta$-spectrally independent with $\eta = O(1/\delta)$; see \cite{CLV20, CLV21}.
In contrast, for the ferromagnetic Potts model with $q \ge 3$,
there is no closed-form expression for $\beta_u(q,\Delta)$ (it is defined as the threshold value where an equation starts to have a double root), and for graphs of unbounded degree $\eta$-spectral independence is only known to hold  when $\beta \le \frac{2(1-\delta)}{\Delta}$.
As a result, we obtain the following corollary of Theorem~\ref{thm:SW:intro}.

\begin{corollary}
	\label{cor:SW:potts:intro}
	Let $\delta\in(0,1)$, $\Delta \ge 3$. Suppose that either $q = 2$ and $\beta < (1-\delta)\beta_u(2,\Delta)$, or
    $q \ge 3$ and $\beta \le \frac{2(1-\delta)}{\Delta}$.
    Then,
    there exists a constant $c = c(\delta) > 0$ such that
    the mixing time of the SW dynamics 
    for the $q$-state ferromagnetic Potts model
    on any  $n$-vertex graph of maximum degree $\Delta$ satisfies 
    $
    T_{mix}(P_{SW}) = O\big((\Delta \log n)^{c} \big).
    $
\end{corollary}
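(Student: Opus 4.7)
The plan is to obtain Corollary \ref{cor:SW:potts:intro} as a direct consequence of Theorem \ref{thm:SW:intro} by verifying, in each of the two regimes, the two hypotheses required by the theorem: namely $\eta$-spectral independence with $\eta = O(1)$ (with the implicit constant depending on $\delta$) and $\beta \Delta = O(1)$.

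First, I would handle the Ising case ($q=2$). The text already cites \cite{CLV20,CLV21} for the fact that when $\beta \le (1-\delta)\beta_u(2,\Delta)$, the distribution $\ising$ is $\eta$-spectrally independent with $\eta = O(1/\delta)$, so the first hypothesis is immediate. For the second hypothesis, I use the closed form $\beta_u(2,\Delta) = \ln\frac{\Delta}{\Delta-2}$ and the elementary estimate $\ln\frac{\Delta}{\Delta-2} = \frac{2}{\Delta} + O(1/\Delta^2)$ for $\Delta \ge 3$, which gives
\[
\beta \Delta \;\le\; (1-\delta)\, \Delta \ln\tfrac{\Delta}{\Delta-2} \;\le\; 2(1-\delta) + O(1/\Delta) \;=\; O(1).
\]
Hence both hypotheses of Theorem~\ref{thm:SW:intro} hold, and its conclusion yields the desired mixing time bound in the Ising case.

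Next, for $q \ge 3$, the assumption $\beta \le \frac{2(1-\delta)}{\Delta}$ immediately gives $\beta \Delta \le 2(1-\delta) = O(1)$, so the second hypothesis is trivial. For the first hypothesis, I would invoke the spectral independence results cited in the paragraph preceding the corollary (stating that $\eta$-spectral independence is known to hold throughout this regime with a constant $\eta$ depending only on $\delta$). Applying Theorem~\ref{thm:SW:intro} then gives $T_{mix}(P_{SW}) = O((\Delta \log n)^c)$ with $c = c(\delta)$, as the theorem's constant $c$ depends near-linearly on $\eta$ and $\beta\Delta$, both of which are bounded in terms of $\delta$ only.

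There is essentially no technical obstacle here: the corollary is a bookkeeping statement combining Theorem~\ref{thm:SW:intro} with the known spectral independence results for the Ising and Potts models. The only mild subtlety is the asymptotic expansion of $\Delta \ln \frac{\Delta}{\Delta-2}$ used to certify $\beta\Delta = O(1)$ in the Ising case; apart from that, the argument is a direct invocation of the main theorem with the cited inputs.
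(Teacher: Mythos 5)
Your overall strategy is exactly the paper's: the corollary is proved (as the more precise Corollary~\ref{cor:SW}) by verifying the hypotheses of the main SW theorem and invoking it, and your verification of $\beta\Delta = O(1)$ in both regimes is correct (note also that skipping the $b$-marginally bounded condition is legitimate here, since Theorem~\ref{thm:SW:intro} folds it into the assumption $\beta\Delta = O(1)$, as $b \ge q^{-1}e^{-\beta\Delta}$). The one place where your write-up is weaker than the paper's is the spectral-independence input for $q \ge 3$: you ``invoke the results cited in the paragraph preceding the corollary,'' but within this paper that introductory claim is precisely what the proof of Corollary~\ref{cor:SW} supplies, namely Ullrich's bound $\|A\| \le \tfrac{1}{2}\beta\Delta$ on the Dobrushin influence matrix for the Potts model, which under $\beta \le \frac{2(1-\delta)}{\Delta}$ gives $\|A\| \le 1-\delta$ and hence $(2/\delta)$-spectral independence via Proposition~\ref{prop:D2SI}. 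In a self-contained proof you should include this short derivation (or a concrete external citation) rather than point back to the paper's own assertion; as written, the Potts case is circular in sourcing, though not wrong in substance. For the Ising case your route via the \cite{CLV20,CLV21} bound $\eta = O(1/\delta)$ (Proposition~\ref{prop:simb} in the paper) is fine and in fact slightly cleaner than the paper's treatment, which instead uses the Dobrushin-norm argument of Proposition~\ref{prop:Dobrushin} (valid only as $\Delta \to \infty$) and therefore needs a separate appeal to \cite{BCCPSV22} for bounded $\Delta$; your choice avoids that case split. Finally, a cosmetic point you inherit from the paper: through Theorem~\ref{thm:SW} the exponent is $\kappa = 2 + \lceil 2\eta/b\rceil$ with $b = 1/(qe^2)$, so the constant $c$ depends on $q$ as well as $\delta$ (with $q$ treated as fixed), which is worth stating explicitly.
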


\noindent
We mention that other conditions known to imply spectral independence (e.g., those in~\cite{BGP16}) are not well-suited for the unbounded degree setting since under those conditions, the best known bound for $\eta$ depends polynomially on $\Delta$. 
For another application of Theorem~\ref{thm:SW:intro}, see Section~\ref{subsec:rg} where we provide a bound on the mixing of the SW dynamics on random graphs.

We comment briefly on our proof approach for Theorem~\ref{thm:SW:intro}.
A mixing time bound for the SW dynamics can be deduced from the so-called \emph{edge-spin} factorization of the entropy functional introduced in \cite{BCPSV21}.
It was noted there that this factorization, in turn, follows 
from a different factorization of entropy known as
\emph{$k$-partite factorization}, or  KPF. Spectral independence is known to imply KPF but with a loss of a multiplicative constant that depends exponentially on the maximum degree of the graph.
Our proof of Theorem~\ref{thm:SW:intro} follows this existing framework,
but pays closer attention to establishing KPF with an optimized constant with a better dependence on the model parameters.
This is done through a multi-scale analysis of the entropy functional; in each scale, we apply spectral independence to achieve a tighter KPF condition. 
Our new results for KPF not only hold for the Potts model, 
but also for a general class of spin systems, and we use it to establish new mixing time bounds for the systematic scan and block dynamics.

\subsection{The systematic scan dynamics}
Our next contribution pertains the \emph{systematic scan dynamics}, 
which is a family of Markov chains closely related to the Glauber dynamics in the sense that updates occur at single vertices sequentially.
The key difference is that the vertex updates 
happen
according to a predetermined ordering $\phi$ of the vertices instead of at random vertices. 
These dynamics offer practical advantages since there is no need to randomly select vertices at each step, thereby reducing computation time. 
Throughout the paper, we will consider the \emph{heat-bath} vertex updates in which a new spin is assigned to a vertex
by sampling from the conditional distribution at the vertex given the spins of its neighbors; this will be the case for
both the Glauber and systematic scan dynamics.

There is a folklore belief that the mixing time of the systematic scan dynamics (properly scaled) is closely related to that of the Glauber dynamics. However, analyzing this type of dynamics has proven very challenging (see, e.g., \cite{DGJ08,Hayes06,DGJnorms,DGJcolorings,PW,guoScans,BCSV18}),
and the best general condition under which the systematic scan dynamics is known to be optimally mixing is a Dobrushin-type condition due to Dyer, Goldberg, and Jerrum~\cite{DGJnorms}. The new developments on Markov chain mixing stemming from spectral independence have not yet provided new results for this dynamics, even for the bounded degree case where much progress has already been made. We show that spectral independence implies optimal mixing of the systematic scan dynamics for \emph{monotone} spin systems with \emph{bounded marginals}; we define both of these notions next.

\begin{definition}[Monotone spin system]
    \label{def:monotone}
    In a monotone system, there is a linear ordering of the spins at each vertex which
    induces a partial order $\preceq_q$ over the state space. 
    A spin system is \emph{monotone} with respect to the
    partial order $\preceq_q$ if for every $\Lambda \subseteq V$ and every pair of pinnings 
    $\tau_1 \succeq_q \tau_2$ on $V \setminus \Lambda$, the conditional distribution $\mu(\cdot\mid \sigma_\Lambda = \tau_1)$ stochastically dominates $\mu(\cdot\mid \sigma_\Lambda = \tau_2)$.
\end{definition}
\noindent
Canonical examples of monotone spin systems include the ferromagnetic Ising model and the hardcore model on bipartite graphs. As in earlier work (see~\cite{CLV20,CLV21,BCCPSV22}), our bounds on the mixing time will depend on a lower bound on the marginal probability of any vertex-spin pair. This is formalized as follows.

\begin{definition}[Bounded marginals]
The distribution $\mu$ is said to be \emph{$b$-marginally bounded}
if for every $\Lambda \subseteq V$ and pinning $\tau \in \Omega_{\Lambda}$, 
and each $(v,s)\in\mathcal P^\tau$,
we have
$
	\mu(\sigma_v = s \mid \sigma_\Lambda = \tau) \ge b.
$
\end{definition}

Before stating our result for the systematic scan dynamics of $b$-marginally bounded monotone spin systems, we note that 
this Markov chain updates in a single step each vertex once in the order prescribed by $\phi$. 
Under a minimal assumption on the spin system (the same one required to ensure the ergodicity of the Glauber dynamics), 
the systematic scan dynamics is ergodic. Specifically, when the spin system is totally-connected (see Definition~\ref{def:tc}), the systematic scan dynamics is ergodic.
Moreover, 
the systematic scan dynamics is not necessarily reversible with respect to $\mu$, so, as in earlier works, we work
with the symmetrized version of the dynamics in which, in each step, the vertices are updated according to $\phi$ first, and subsequently in the reverse order of $\phi$. The resulting dynamics, which we denote by $P_\phi$, is reversible with respect to $\mu$. Our main result for the systematic scan dynamics is the following. 

\begin{restatable}[]{theorem}{ssintro}
	\label{thm:SIIsing:intro}
    Let $b > 0$, $\eta$ > 0, and  $\Delta \ge 3$.
	Suppose $G= (V,E)$ is an $n$-vertex graph of maximum degree $\Delta$. 
	Let $\mu$ be the distribution of a totally-connected monotone spin system on $G$. 
	If $\mu$ is $\eta$-spectrally independent and $b$-marginally bounded,
	then there exists a universal constant $C > 0$ such that for any ordering $\phi$ 
	\[
	T_{mix}(P_{\phi}) = \Delta^{9+ 4\lceil\frac{2\eta}{b}\rceil} \cdot \left(\frac{C (\eta+1)^5 }{b^{6}} \right)^{2+\lceil\frac{2\eta}{b}\rceil} \cdot O(\log n).
	\]
\end{restatable}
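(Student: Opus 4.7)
The plan is to reduce the mixing time bound for $P_\phi$ to a $k$-partite factorization of entropy (KPF), prove this KPF with a polynomial-in-$\Delta$ constant via a multi-scale use of spectral independence, and then transfer the resulting mixing bound for a color-class block dynamics to $P_\phi$ using the Peres--Winkler censoring inequality; monotonicity enters only in this last step.

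The target KPF has the form
$$\Ent_\mu[f]\;\le\;C(\Delta,\eta,b,k)\sum_{i=1}^k \mu[\Ent_{V_i}[f]]$$
for every partition $V=V_1\sqcup\cdots\sqcup V_k$ and every $f\ge 0$, with $C(\Delta,\eta,b,k)$ polynomial in $\Delta$ and of order $k^{O(\eta/b)}$. Previous proofs collapse the full partition in a single inductive step, which forces an exponential-in-$\Delta$ blowup. I would instead split $V$ into only two blocks at a time and recurse. On the two-block level, $\eta$-spectral independence together with $b$-marginal boundedness gives an approximate tensorization of entropy with a constant controlled by $\eta/b$ (and independent of $\Delta$); after $O(\lceil 2\eta/b\rceil)$ rounds of recursive bisection down to individual color classes, the accumulated constant is $\text{poly}(\Delta)\cdot(1/b)^{O(\eta/b)}$, matching the exponent structure of the theorem.

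Next I would apply this KPF with $k\le\Delta+1$ and $V_1,\dots,V_k$ a proper coloring of $G$. Because each $V_i$ is an independent set, $\mu(\cdot\mid\sigma_{V\setminus V_i})$ factorizes into single-site conditionals, so any sequential heat-bath sweep over $V_i$ realizes exactly one heat-bath block update of $V_i$. Combined with KPF this yields a modified log-Sobolev inequality---and hence an $O(\text{poly}(\Delta)\log n)$ mixing bound---for the block dynamics that picks a uniformly random color class and updates it. To transfer this bound to $P_\phi$ I would invoke Peres--Winkler: for a monotone system started from the $\preceq_q$-maximum or minimum configuration, the total variation distance to stationarity is monotone under censoring of single-site updates. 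Running $T=\text{poly}(\Delta)\log n$ symmetrized sweeps of $\phi$ and, in the $t$-th sweep, censoring all updates outside a uniformly random color class $V_{i_t}$, produces a censored process that by the commutation above is distributed as $T$ steps of the color-class block dynamics and is therefore within $\epsilon$ of stationarity; since censoring only enlarges TV distance, so is the uncensored $P_\phi$.

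The main obstacle is the first step: engineering each two-block split so that the conditional spectral independence and marginal-boundedness constants do not degrade with $\Delta$. A naive split can inflate the effective $\eta$ on the smaller side because of boundary effects, and most of the multi-scale analysis will go into choosing bisections so that conditional spectral independence on each half is preserved with essentially the original $\eta$ and $b$ parameters. A secondary obstacle is carrying out the censoring step for the symmetrized (rather than forward-only) scan, which I would handle by applying Peres--Winkler separately to the forward and reverse halves of each sweep.
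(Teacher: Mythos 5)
Your high-level architecture (a $k$-partite factorization with polynomial dependence on $\Delta$, then rapid mixing of the independent-set block dynamics, then a censoring/comparison step using monotonicity to transfer to $P_\phi$) is the same as the paper's. The genuine gap is in the step you yourself flag as the main obstacle: the base case of your recursive bisection. You assert that for an arbitrary two-block partition, $\eta$-spectral independence plus $b$-marginal boundedness yields a two-block factorization of entropy with a constant controlled by $\eta/b$ and independent of $\Delta$. No such statement is available, and it is essentially the crux of the whole problem: spectral independence is only known to give \emph{uniform} block factorization over \emph{random} blocks of linear size (Theorem~\ref{thm:SI2UBF}), not factorization for a fixed, adversarially chosen bipartition; for fixed blocks the known routes go through AT/KPF and reintroduce dependence on $\Delta$ or $n$, which is exactly the circularity you need to avoid. (Also, the bookkeeping of the recursion is off: bisecting from $V$ down to $k=O(\Delta)$ color classes takes $O(\log\Delta)$ rounds, not $O(\lceil 2\eta/b\rceil)$, and it is unclear how intermediate blocks stay unions of color classes.) The paper's actual multi-scale argument is different: it applies $\lceil\theta n\rceil$-UBF with $\theta=\Theta(1/\Delta)$, so the UBF constant is $(e/\theta)^{\lceil 2\eta/b\rceil}=O(\Delta)^{\lceil 2\eta/b\rceil}$; then, inside each connected component of the random block (which has size $O(\log n)$ in the relevant expectation, by Lemma~\ref{lemma:g1}), it uses the crude KPF bound with constant polynomial in the component size, obtained from the Glauber spectral gap under spectral independence (Lemma~\ref{lemma:CGSVgap}) together with Lemma~\ref{lemma:gap2KPF}, and stitches the components together with the product-measure entropy identities of Lemma~\ref{lemma:proofstep3}. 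If you want to salvage your plan, you would need to replace the two-block base case with something of this flavor; as written, that step would fail.

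Two smaller points on the transfer step. First, the paper does not use the Peres--Winkler inequality directly but the Fill--Kahn ordering $\preceq_\mu$ of transition matrices (Proposition~\ref{prop:po} and Lemma~\ref{lemma:sd}), precisely because the scan is symmetrized and built from heat-bath updates: one shows $P_\phi\preceq_\mu K_j$ by writing $K_j=K_j^2=(\prod_{i:v_i\in I_j}P_i)^2$ (commutation inside an independent set, exactly as you observe) and replacing the censored $P_i$'s by the identity, then averages over $j$ to get $P_\phi\preceq_\mu P_{\mathcal B}$; your worry about handling the forward and reverse halves separately is resolved this way. Second, your censoring argument only controls the TV distance of $P_\phi$ started from the two extremal configurations, whereas the mixing time is a worst-case quantity; you still need the standard monotone-system conversion, i.e., the grand coupling plus a per-vertex marginal comparison and a union bound over the $n$ vertices (which forces you to run the block dynamics to accuracy $O(1/(qn))$, as in the paper's choice $T_{mix}(P_{\mathcal B},1/(8qn))$). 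This is routine but must be included for the stated bound to follow.
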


\noindent
The bound in this theorem is tight: for a particular ordering $\phi$, we prove an $\Omega(\log n)$ mixing time lower bound that applies to settings where $\Delta$, $b$ and $\eta$ are all $\Theta(1)$; see Lemma~\ref{lemma:lbss}.

We present next several interesting consequences of Theorem~\ref{thm:SIIsing:intro}. First,
we obtain the following corollary using the known results about spectral independence for the ferromagnetic Ising model.

\begin{corollary}
	\label{cor:tuIsing:intro}
	Let  $\delta \in (0,1) ,\Delta \ge 3$ and $0 < \beta < (1-\delta)\beta_u(2,\Delta)$.
    Suppose $G= (V,E)$ is an $n$-vertex graph of maximum degree $\Delta$. 
    For any ordering $\phi$ of the vertices of $G$, 
    the mixing time of $P_\phi$ for the Ising model on $G$ with parameter $\beta$ 
    satisfies $T_{mix}(P_\phi) = O(\log n)$.
\end{corollary}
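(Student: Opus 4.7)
The plan is to deduce Corollary~\ref{cor:tuIsing:intro} as a direct application of Theorem~\ref{thm:SIIsing:intro} to the ferromagnetic Ising distribution $\ising$. Four hypotheses must be verified: $\ising$ is (i) monotone, (ii) totally-connected, (iii) $\eta$-spectrally independent with $\eta = O(1)$, and (iv) $b$-marginally bounded with $b = \Omega(1)$, in each case with constants that depend only on $\delta$ (and the fixed $\Delta$). Once this is done, Theorem~\ref{thm:SIIsing:intro} yields
\[
T_{mix}(P_\phi) \le \Delta^{9 + 4\lceil 2\eta/b\rceil}\cdot\left(\frac{C(\eta+1)^5}{b^6}\right)^{2+\lceil 2\eta/b\rceil}\cdot O(\log n),
\]
and every factor outside $\log n$ collapses to a constant.

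Items (i)--(iii) are essentially immediate. Under the natural ordering $-1 \preceq +1$, the Ising density is log-supermodular, so the FKG lattice condition gives monotonicity; every Ising configuration has strictly positive probability, so the system is totally-connected. For (iii), the discussion preceding Corollary~\ref{cor:SW:potts:intro} recalls the result of \cite{CLV20,CLV21}: whenever $\beta \le (1-\delta)\beta_u(2,\Delta)$, the distribution $\ising$ is $\eta$-spectrally independent with $\eta = O(1/\delta)$, uniformly over all graphs of maximum degree at most $\Delta$ and all pinnings. This is the main substantive input and is black-boxed.

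The only item requiring a short argument is (iv). For any pinning $\tau$ on $\Lambda \subseteq V$ and any vertex $v \notin \Lambda$, I would pair each extension $\eta$ with $\eta_v = +1$ with the extension obtained by flipping the spin at $v$. Since at most $\Delta$ edges are incident to $v$, the Hamiltonian changes by at most $\beta\Delta$ in absolute value, giving
\[
\frac{\ising(\sigma_v=+1 \mid \sigma_\Lambda = \tau)}{\ising(\sigma_v=-1 \mid \sigma_\Lambda = \tau)} \in [e^{-\beta\Delta}, e^{\beta\Delta}],
\]
so each marginal is at least $(1+e^{\beta\Delta})^{-1}$. Since $\beta\Delta \le \Delta \ln\frac{\Delta}{\Delta-2}$ is bounded by a universal constant for all $\Delta \ge 3$, this yields $b = \Omega(1)$. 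Combining all four items with Theorem~\ref{thm:SIIsing:intro} yields the desired $O(\log n)$ bound; there is no real obstacle, since the only nontrivial component (spectral independence up to the tree uniqueness threshold) is cleanly imported from \cite{CLV20,CLV21}.
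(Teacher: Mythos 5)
Your reduction to Theorem~\ref{thm:SIIsing:intro} and the verification of its hypotheses (monotonicity, total connectivity, $\eta=O(1/\delta)$ spectral independence via \cite{CLV20,CLV21}, and $b=\Omega(1)$ marginal boundedness using $\beta\Delta\le\Delta\ln\frac{\Delta}{\Delta-2}=O(1)$) are all fine, and this is exactly how the paper handles the case of bounded $\Delta$ (its Proposition~\ref{prop:simb} packages the spectral independence and marginal bounds). But there is a genuine gap: the conclusion of Theorem~\ref{thm:SIIsing:intro} carries an explicit prefactor $\Delta^{9+4\lceil 2\eta/b\rceil}$, and this factor does \emph{not} ``collapse to a constant'' when $\Delta$ is allowed to grow with $n$. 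The corollary is meant to hold — as the paper states immediately after it — with a hidden constant depending only on $\delta$, \emph{even when $\Delta$ depends on $n$}; this is the whole point in a paper about the unbounded-degree regime. Your argument only yields $\Delta^{O(1/\delta)}\cdot O(\log n)$, which is $O(\log n)$ only for $\Delta=O(1)$ (your parenthetical ``and the fixed $\Delta$'' quietly assumes this), so the claim as intended is not proved.

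The paper closes this gap with a case split. When $\Delta$ is bounded it argues exactly as you do. When $\Delta\rightarrow\infty$ it abandons Theorem~\ref{thm:SIIsing:intro} entirely and instead uses Proposition~\ref{prop:Dobrushin}: for large $\Delta$, the $\delta$-uniqueness condition forces the Dobrushin influence matrix to satisfy $\|A\|\le 1-\delta/2$ (each entry is at most $\beta/2$ and $\beta\Delta/2<1-\delta/2$ for $\Delta$ large), and then the classical result of \cite{Hayes06} gives $O(\log n)$ mixing of any systematic scan with a constant depending only on $\delta$. To repair your proposal you would need either this second branch or some other argument removing the polynomial $\Delta$-dependence from the systematic scan bound; as written, the step ``every factor outside $\log n$ collapses to a constant'' is where the proof fails.
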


\noindent
The constant hidden by the big-$O$ notation is an absolute constant that depends only on the constant $\delta$, even when $\Delta$ depends on $n$.
This result, compared to the earlier conditions in~\cite{DGJ08,Hayes06,DGJnorms}, extends the parameter regime where the $O(\log n)$ mixing time bound applies; in fact, the parameter regime in Corollary~\ref{cor:tuIsing:intro} is tight, as the systematic scan dynamics undergoes an exponential slowdown when $\beta > \beta_u(2,\Delta)$~\cite{PW}.
We also derive results for the hardcore model on bipartite graphs; see Section~\ref{sec:sscor}.

Our next application concerns the specific but relevant case 
where the underlying graph is an $n$-vertex cube of the integer lattice graph $\mathbb{Z}^d$.
In this context,
it was proved in~\cite{BCSV18} that all systematic scan dynamics converge in $O(\log n (\log \log n)^2)$ steps
whenever a well-known condition known as 
\emph{strong spatial mixing (SSM)} holds.
A pertinent open question is whether SSM implies spectral independence. In fact, 
spectral independence is often proved
by adapting earlier arguments for establishing SSM (see, e.g.,~\cite{ALO20,CLV20}).
Recently, it was proved in \cite{CLMM23} that SSM on trees implies spectral independence on large-girth graphs.
We show that for \emph{general} spin systems on $\mathbb{Z}^d$, SSM
implies $\eta$-spectral independence with $\eta = O(1)$.

\begin{restatable}[]{lemma}{ssmtosiintro}
	\label{lemma:SSMtoSI}
	For a spin system on a $d$-dimensional cube $V \subseteq \mathbb{Z}^d$, SSM implies $\eta$-spectral independence, where $\eta=O(1)$.
\end{restatable}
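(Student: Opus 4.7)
The plan is to bound $\lambda_1(\Psi^\tau_\mu)$ by the maximum absolute row sum $\|\Psi^\tau_\mu\|_\infty$ and then to control this row sum using the exponential decay of pairwise influences supplied by SSM, together with the polynomial volume growth of $\Z^d$. Since $\lambda_1(A) \le \rho(A) \le \|A\|_\infty$ for any square matrix $A$, it suffices to prove that for every $\Lambda \subseteq V$, every pinning $\tau$ on $\Lambda$, and every $(u,a) \in \mathcal P^\tau$,
\[
\sum_{(v,b)\in\mathcal P^\tau}\bigl|\Psi^\tau_\mu((u,a),(v,b))\bigr| \le \eta,
\]
for some constant $\eta$ independent of $n$, $\Lambda$, $\tau$, and $(u,a)$.

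To obtain an entrywise bound, I condition on $\sigma_u$ under the pinning $\tau$, rewriting
\[
\Psi^\tau_\mu((u,a),(v,b)) = \sum_{\substack{a' \in \Omega^\tau_u\\ a' \ne a}} \mu(\sigma_u = a' \mid \sigma_\Lambda = \tau)\bigl[\mu(\sigma_v = b \mid \sigma_u = a,\, \sigma_\Lambda = \tau) - \mu(\sigma_v = b \mid \sigma_u = a',\, \sigma_\Lambda = \tau)\bigr].
\]
Each bracketed term is a difference of single-site marginals at $v$ between two pinnings on $\Lambda \cup \{u\}$ that disagree only at the site $u$, so SSM bounds it by $C\exp(-\alpha\,\mathrm{dist}(u,v))$ for constants $C,\alpha > 0$ depending only on the spin system. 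Using that the graph-distance sphere of radius $r$ about $u$ in $\Z^d$ has at most $K_d\, r^{d-1}$ vertices, summing yields
\[
\sum_{(v,b)\in\mathcal P^\tau}\bigl|\Psi^\tau_\mu((u,a),(v,b))\bigr| \le q\, C \sum_{r\ge 1} K_d\, r^{d-1}\, e^{-\alpha r} = O(1),
\]
which converges since exponential decay dominates polynomial growth; the final bound depends only on $q$, $C$, $\alpha$, and $d$, all of which are $O(1)$ for a fixed spin system on cubes of $\Z^d$.

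The only delicate point is the use of SSM in the entrywise step: the standard formulation compares the marginal at $v$ under two pinnings on a set differing on a subset $S$ and yields decay like $e^{-\alpha\,\mathrm{dist}(v,S)}$; here the ``boundary'' is $\Lambda \cup \{u\}$ and the disagreement set is the single site $\{u\}$. One must check that the SSM constants $C,\alpha$ apply uniformly over such arbitrary-shaped pinnings and over all sub-cubes of $\Z^d$, rather than only under classical free or plus boundary conditions. For the notion of SSM commonly used in the spin-systems literature, this uniformity is built into the definition, so no separate argument is needed beyond the convergent geometric-polynomial series above.
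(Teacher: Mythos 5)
Your argument is the classical ``total influence'' route: bound $\lambda_1(\Psi^\tau_\mu)$ by the maximum absolute row sum and control each entry by an exponentially decaying influence, summing over spheres in $\mathbb{Z}^d$. That route is sound \emph{if} one assumes SSM in the strong form you invoke in your final paragraph, namely decay of the influence of a single-site disagreement under \emph{arbitrary} pinnings $\tau$ on \emph{arbitrary} subsets $\Lambda\subseteq V$ (equivalently, SSM for arbitrarily shaped conditioning regions). But that is not the hypothesis of the lemma. The SSM condition of this paper (Definition~\ref{def:ssm}) is deliberately weaker: it only quantifies over $d$-dimensional rectangles $\Lambda$ of side length between $L$ and $2L$, with the two boundary conditions living on $\partial\Lambda$ and differing at a single vertex. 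Your entrywise bound $|\Psi^\tau_\mu((u,a),(v,b))|\le C e^{-\alpha\,\mathrm{dist}(u,v)}$ requires comparing marginals at $v$ in the region $V\setminus(\Lambda\cup\{u\})$, which for a general pinning set $\Lambda$ is an arbitrarily shaped (possibly very thin or disconnected) region; nothing in Definition~\ref{def:ssm} applies to such regions, and the passage from rectangle-restricted SSM to arbitrary-region influence decay is precisely the step that is missing. This is not a cosmetic issue: the paper points out that the stronger, arbitrary-region (or thin-rectangle) form of SSM needed for your step does not hold up to $\beta_c$ for the ferromagnetic Ising model on $\mathbb{Z}^2$, which is the main application of the lemma, so ``the uniformity is built into the definition'' cannot be repaired by citing the standard literature without losing the intended parameter range.

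The paper's proof is structured exactly to avoid this gap: it builds a heat-bath block dynamics whose blocks are the fat rectangles $S_v\cap V$ of side length between $L$ and $2L$, uses the weak SSM only on those blocks (with the disagreement sitting on the block boundary) to construct a coupling contracting Hamming distance at rate $1-\Omega(1/n)$, and then invokes Lemma~\ref{lemma:coup2SI}, which converts contraction of a block dynamics (with block size $M=O(1)$ and selection probability $D=\Theta(1/n)$) into $O(1)$-spectral independence. If you want to salvage your approach, you would need to first upgrade the rectangle-restricted SSM to influence decay under arbitrary pinnings, which is essentially a different (and in general unavailable) hypothesis; alternatively, adopt the coupling route, where the restriction to fat rectangles is harmless because you get to choose the update blocks.
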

\noindent
The formal definition of SSM is given later in Section~\ref{sec:ss}. Lemma~\ref{lemma:SSMtoSI} does not assume monotonicity for the spin system and could be of independent interest.
An interesting consequence of this lemma, when combined with~Theorem~\ref{thm:SIIsing:intro} is the following.
\begin{corollary}
\label{cor:monogrid:intro}
Let $d\ge 2$ and $b > 0$. For a $b$-marginally bounded monotone totally-connected spin system on a $d$-dimensional cube $V\subseteq\mathbb{Z}^d$, SSM implies that  
the mixing time of any systematic scan $P_\phi$ is $O(\log n)$.
\end{corollary}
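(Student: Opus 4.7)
The proof plan is to simply chain the two preceding results, Lemma~\ref{lemma:SSMtoSI} and Theorem~\ref{thm:SIIsing:intro}, after observing that all relevant parameters in this setting are absorbed into constants depending only on $d$ and $b$. The key point is that a $d$-dimensional cube in $\mathbb{Z}^d$ has maximum degree $\Delta = 2d$, which is a constant (since $d$ is fixed), and the Gibbs distribution of our spin system is already assumed to be $b$-marginally bounded and monotone.

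First, I would invoke Lemma~\ref{lemma:SSMtoSI} to turn the SSM hypothesis into $\eta$-spectral independence with $\eta = O(1)$; here I do not need to assume monotonicity because the lemma is stated for general spin systems on $\mathbb{Z}^d$. Next, I would verify that all hypotheses of Theorem~\ref{thm:SIIsing:intro} are in force: the system is monotone, totally-connected, $b$-marginally bounded, and (by the previous step) $\eta$-spectrally independent with $\eta = O(1)$, all on an $n$-vertex graph of maximum degree $\Delta = 2d$. Plugging these into the mixing-time bound from Theorem~\ref{thm:SIIsing:intro} yields
\[
T_{mix}(P_\phi) \;\le\; (2d)^{9+4\lceil 2\eta/b\rceil}\cdot\left(\frac{C(\eta+1)^5}{b^6}\right)^{2+\lceil 2\eta/b\rceil}\cdot O(\log n).
\]

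Since $d$ is fixed and $\eta, b$ are constants independent of $n$, every factor outside the $O(\log n)$ is an absolute constant (depending on $d$ and $b$), and so the right-hand side simplifies to $O(\log n)$, which is the desired conclusion for any systematic scan ordering $\phi$. There is no real obstacle here: the corollary is essentially the observation that the hypotheses of Theorem~\ref{thm:SIIsing:intro} are degree-free in the lattice setting once Lemma~\ref{lemma:SSMtoSI} is available, so all technical work has already been done in establishing those two ingredients.
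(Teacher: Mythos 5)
Your proposal is correct and follows essentially the same route as the paper: apply Lemma~\ref{lemma:SSMtoSI} to convert SSM into $\eta$-spectral independence with $\eta = O(1)$, then feed the constant-degree lattice setting into Theorem~\ref{thm:SIIsing:intro} and absorb all degree- and parameter-dependent factors into the constant. (The only cosmetic difference is the value quoted for the maximum degree of the cube in $\mathbb{Z}^d$ --- your $\Delta = 2d$ versus the paper's $\Delta = 2^d$ --- which is immaterial since either is a constant depending only on $d$.)
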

\noindent For the ferromagnetic Ising model on $\mathbb{Z}^2$,
SSM is known to hold for all $\beta < \beta_c(2) = \ln(1+\sqrt{2})$ (see~\cite{CP20,MOS94,Alx98,BDC12}), so by Corollary~\ref{cor:monogrid:intro} we deduce that when $\beta < \beta_c(2)$, the mixing time of any systematic scan $P_\phi$ on an $n$-vertex square box of $\mathbb{Z}^2$ is $O(\log n)$; note that $\beta_c(2)  > \beta_u(2,2d)$, 
the corresponding tree uniqueness threshold.

We comment briefly on the techniques used to establish our results for the systematic scan dynamics.
Our starting point is again the $k$-partite factorization of entropy (KPF). Our improved bounds for KPF imply that a global Markov chain that updates a random independent set of vertices in each step is rapidly mixing.
We then use the censoring technique from \cite{FillKahn13, BCV18} to relate the mixing time of this Markov chain to that of the systematic scan dynamics. 
To establish Lemma~\ref{lemma:SSMtoSI}, 
we use SSM to construct a contractive coupling for a particular 
Markov chain.
Our Markov chain is similar to the one from~\cite{DSVW04}, but modified to update rectangles instead of balls, and thus match the variant of SSM that holds up to the critical threshold for the Ising model on $\mathbb{Z}^2$.
This contractive coupling is then used to establish spectral independence using the machinery from \cite{BCCPSV22}.

\subsection{The block dynamics}
\label{sec:blocks:intro}

Our final result concerns a family of Markov chains known as the \emph{block dynamics}. They are a natural generalization of the Glauber dynamics where a random subset of vertices (instead of a random vertex) is updated in each step. More precisely, 
let $\mathcal{B} := \{B_1, \dots, B_K\}$ be a collection of subsets of vertices (called {blocks}) such that $V=\cup_{i=1}^K B_i$.
Let $\alpha$ be a distribution over $\mathcal{B}$.
The \emph{(heat-bath) block dynamics} with respect to $(\mathcal{B}, \alpha)$
is the Markov chain that, in each step,
given a spin configuration $\sigma_t$,
selects $B_i \in \mathcal{B}$ 
according to the distribution $\alpha$ and updates the configuration on $B_i$
with a sample from the $\mu(\cdot \mid \sigma_t(V\setminus B_i))$; that is, from the conditional distribution 
on $B_i$ given the spins of $\sigma_t$ in $V\setminus B_i$.
We denote this Markov chain (and its transition matrix) by $P_{\mathcal{B},\alpha}$.
When the $B_i$'s are each single vertices, and $\alpha$ is a uniform distribution over the blocks in $\mathcal{B}$, we obtain the Glauber dynamics.
Our result for the mixing time of the block dynamics is the following.

\begin{theorem}
	\label{thm:blocks:intro}
    Let $b>0$, $\eta>0$ and $\Delta \ge 3$.
    Suppose $G= (V,E)$ is an $n$-vertex graph of maximum degree $\Delta$. 
Let $\mu$ be a Gibbs distribution of a totally-connected spin system on $G$.    
    Let $\mathcal{B} := \{B_1, \dots, B_K\}$ be any collection of blocks such that $V=\cup_{i=1}^K B_i$, and let $\alpha$ be a distribution over $\mathcal{B}$.
    If $\mu$ is $\eta$-spectrally independent and $b$-marginally bounded, 
    then there exists a universal constant $C > 0$ such that
    the mixing time of block dynamics $P_{\mathcal{B},\alpha}$
    satisfies:
	\[T_{mix}(P_{\mathcal{B},\alpha}) = O\Big( \alphamin^{-1} \cdot \Big(\frac{C  (\eta+1)^5  \Delta \log n }{b^6}\Big)^{3+\lceil\frac{2\eta}{b}\rceil}\Big),\]
    where $\alphamin= \min_{v\in V} \sum_{B\in\mathcal{B}} \alpha_B$.
\end{theorem}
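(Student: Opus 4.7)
The strategy is to derive a modified log-Sobolev inequality (MLSI) for $P_{\mathcal{B},\alpha}$ from the paper's improved $k$-partite factorization of entropy (KPF), which I would treat as a black box (it is the paper's main technical contribution, announced in the abstract). The plan has three stages. First, apply the improved KPF to the partition $V_1,\dots,V_k$ of $V$ into $k\le\Delta+1$ independent sets obtained from a greedy coloring of $G$. Under $\eta$-spectral independence and $b$-marginal boundedness, KPF yields
$$
\mathrm{Ent}_\mu(f)\;\le\;C_{\mathrm{KPF}}\,\sum_{i=1}^{k} \mu\!\bigl[\mathrm{Ent}_\mu\bigl(f\mid\sigma_{V\setminus V_i}\bigr)\bigr],
$$
where the constant $C_{\mathrm{KPF}}$ depends polynomially on $\Delta$, $b^{-1}$, $\eta$, and $\log n$, and its exponent inherits the factor $\lceil 2\eta/b\rceil$ from the multi-scale analysis that drives the KPF proof.

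The second stage would convert this inequality along the independent-set partition into a factorization along the blocks in $\mathcal{B}$. Because each $V_i$ is an independent set, conditionally on $\sigma_{V\setminus V_i}$ the spins on $V_i$ are mutually independent, so tensorization of entropy for product measures gives
$$
\mathrm{Ent}_\mu\bigl(f\mid\sigma_{V\setminus V_i}\bigr)\;\le\;\sum_{v\in V_i}\mu\!\bigl[\mathrm{Ent}_\mu\bigl(f\mid\sigma_{V\setminus\{v\}}\bigr)\bigm|\sigma_{V\setminus V_i}\bigr],
$$
and summing over $i$ together with the tower property yields an approximate tensorization of entropy with the same constant $C_{\mathrm{KPF}}$. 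To transfer to $\mathcal{B}$, one uses the chain-rule monotonicity $\mu[\mathrm{Ent}_\mu(f\mid\sigma_{V\setminus\{v\}})]\le\mu[\mathrm{Ent}_\mu(f\mid\sigma_{V\setminus B})]$ valid whenever $v\in B$, and then averages over the blocks containing $v$ with the renormalized weights $\alpha_B/\alpha_v$, where $\alpha_v=\sum_{B\ni v}\alpha_B\ge\alpha_{\min}$. This produces a block factorization
$$
\mathrm{Ent}_\mu(f)\;\le\;C_{\mathrm{BF}}\,\alpha_{\min}^{-1}\sum_{B\in\mathcal{B}}\alpha_B\,\mu\!\bigl[\mathrm{Ent}_\mu\bigl(f\mid\sigma_{V\setminus B}\bigr)\bigr],
$$
i.e., an MLSI for $P_{\mathcal{B},\alpha}$ with constant at most $C_{\mathrm{BF}}\,\alpha_{\min}^{-1}$. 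The third stage converts this MLSI into a mixing time bound through the standard inequality $T_{\mathrm{mix}}(P)\le\rho_0(P)^{-1}\log\log(1/\mu_{\min})$, together with $\log(1/\mu_{\min})=O(n\log q)$; this yields the bound announced in the theorem.

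The hard part will be keeping the constant in the second stage polynomial in $\Delta$ and $\log n$ rather than allowing it to grow with $n$. The naive route of first collapsing KPF into single-site approximate tensorization, then using the pointwise comparison between $\sigma_{V\setminus\{v\}}$ and $\sigma_{V\setminus B}$, introduces a factor of $|B|_{\max}$ that can be as large as $n$ and is therefore too lossy. To recover the polynomial-in-$(\Delta\log n)$ constant promised in the theorem, one must amortize the covering of $V$ by the blocks through the layered structure generated by the multi-scale KPF argument, aggregating per scale rather than block by block. Carrying this bookkeeping out carefully is what produces the $(\Delta\log n)^{3+\lceil 2\eta/b\rceil}$ factor and is the most delicate piece of the argument.
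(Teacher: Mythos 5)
Your stages 1 and 3 match the paper: the improved $k$-partite factorization (Theorem~\ref{thm:SI2KPF}) applied to a proper-coloring partition into $k\le \Delta+1$ independent sets, and the standard conversion from relative-entropy decay to mixing via \eqref{eq:tmix}, are exactly how the paper proceeds. The problem is your stage 2, and you have in fact flagged the gap yourself: collapsing KPF to single-site approximate tensorization and then bounding each $\mathbb{E}\bigl[\mathrm{Ent}_v(f)\bigr]$ by $\mathbb{E}\bigl[\mathrm{Ent}_B(f)\bigr]$ for some $B\ni v$ costs a factor $\max_B|B|$, which can be of order $n$, so the route as written does not prove the theorem. Your proposed repair --- ``amortize the covering of $V$ by the blocks through the layered structure generated by the multi-scale KPF argument, aggregating per scale'' --- is not an argument, and it also points in the wrong direction: the passage from KPF to the general block factorization has nothing to do with the multi-scale analysis inside the KPF proof. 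The paper handles it with a cited lemma (Lemma~\ref{lemma:kpf2gbf}, i.e.\ Lemma~3.4 of \cite{BCCPSV22}): KPF with constant $C_{\mathrm{KPF}}$ implies GBF with constant $k\cdot C_{\mathrm{KPF}}$, losing only the chromatic-number factor $k=\chi\le\Delta+1$, never $\max_B|B|$. The missing idea is the fractional-cover (Shearer-type) factorization of entropy for \emph{product} measures: for each independent set $U_j$ the conditional law on $U_j$ is a product measure, the traces $\{B\cap U_j\}_{B\in\mathcal B}$ with weights $\alpha_B$ cover every $v\in U_j$ with total weight at least $\alpha_{\min}$, so $\alpha_{\min}\,\mathrm{Ent}^{\tau}_{U_j}(f^{\tau})\le\sum_B\alpha_B\,\mathbb{E}\bigl[\mathrm{Ent}_{B\cap U_j}(\cdot)\bigr]$ with constant one; one then uses the monotonicity $\mathbb{E}\bigl[\mathrm{Ent}_{B\cap U_j}(f)\bigr]\le\mathbb{E}\bigl[\mathrm{Ent}_B(f)\bigr]$ (Corollary~\ref{cor:monent}) and sums over $j$, paying only the factor $k$. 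Note that the reverse single-site inequality $\sum_{v\in B}\mathbb{E}[\mathrm{Ent}_v(f)]\le\mathbb{E}[\mathrm{Ent}_B(f)]$ that would rescue your naive route is false even for product measures, so working block-by-block at the level of blocks (not vertices) is essential.

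A second, smaller inaccuracy: the exponent $3+\lceil 2\eta/b\rceil$ is not produced by any delicate bookkeeping in the KPF-to-blocks conversion. It is simply $\kappa+1$ with $\kappa=2+\lceil 2\eta/b\rceil$ the KPF exponent; the extra power of $\Delta\log n$ absorbs the factor $\chi\le\Delta+1$ from Lemma~\ref{lemma:kpf2gbf} together with the $\log\log(1/\mu_{\min})=O(\log n)$ term coming from \eqref{eq:tmix} (the paper's Theorem~\ref{thm:GBF} keeps these factors explicit). Once you replace your stage 2 with the Shearer-for-product-measures argument (or simply invoke Lemma~\ref{lemma:kpf2gbf} and Proposition~\ref{prop:GBF}), the rest of your outline goes through and recovers the stated bound.
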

\noindent
See Theorem~\ref{thm:GBF} for a more precise statement.
Previous results for the block dynamics only apply to the bounded degree case~\cite{BCSV22,CP20,BCCPSV22}, so Theorem~\ref{thm:blocks:intro} provides the first bounds 
for its mixing time in the unbounded degree setting.

\section{Preliminaries}

This section provides several definitions and background results we will refer to in our proofs.

\subsection{Mixing times and modified log-Sobolev inequalities}
\label{sec:tmix}
Let $P$ be an irreducible and aperiodic  (i.e., ergodic) Markov chain
with state space $\Omega$ and stationary distribution $\mu$. Let us assume that
$P$ is reversible with respect to $\mu$,
and let
\[ 
d(t):=\max_{x\in \Omega } \|P^t(x,\cdot) - \mu\|_{TV} := \max_{x\in \Omega } \max_{A\subseteq \Omega} |P^t(x,A)-\mu(A)|,
\] where $P^t(x,\cdot)$ denotes the distribution of the chain at time $t$ assuming $x \in \Omega$ as the starting state; $\|\cdot\|_{TV}$ denotes the total variation distance.
Note that with a slight abuse of notation we use $P$ for both the Markov chain and its transition matrix.
For $\epsilon>0$, 
let $$T_{mix}(P, \epsilon) := \min \{t>0: d(t)\le \epsilon \},$$ and
the \emph{mixing time of $P$} is defined as $T_{mix}(P) = T_{mix}(P, 1/4)$.

For functions $f,g:\Omega\rightarrow \mathbb{R}$, 
the \emph{Dirichlet form} of a reversible Markov chain $P$ with stationary distribution $\mu$ is defined as 
\[
\mathcal{E}_P(f,g) = \langle f, (I-P) g\rangle_{\mu} = \frac{1}{2} \sum_{x,y\in \Omega} 
\mu(x) P(x,y) (f(x)- f(y)) (g(x) - g(y)),
\]
where $\langle f, g\rangle_{\mu}:=\sum_{x\in \Omega} f(x)g(x) \mu(x)$.

The spectrum of the ergodic and reversible Markov chain $P$ is real, and we let $1 = \lambda_1 > \lambda_2 \ge \dots \ge \lambda_{|\Omega|} \ge -1$ denote its eigenvalues. The (absolute) spectral gap of $P$ is defined by $\gap(P) = 1 - \max\{|\lambda_2|,|\lambda_{|\Omega|}|\}$.
When $P$ is positive semidefinite, we have
\[
	\gap(P) =1 - \lambda_2 = \inf\left\{ \frac{\mathcal{E}_P(f,f)}{\langle f, f\rangle_{\mu}} \mid f:\Omega\rightarrow \mathbb{R}, \langle f, f\rangle_{\mu}\neq 0 \right\}.
\]
For $P$ reversible and ergodic, we have 
the following standard comparison between the spectral gap and the mixing time 
\begin{equation}
    \label{eq:gaptmix}
    T_{mix}(P, \epsilon) = \frac{1}{\gap(P)}\cdot \log \big(\frac{1}{\epsilon\mu_{min}}\big),
\end{equation}
where $\mu_{min} := \min_{x\in \Omega} \mu(x)$. 

The expected value of a function $f: \Omega \rightarrow \mathbb{R}_{\ge 0}$
with respect to $\mu$ is defined as $\E_\mu[f] = \sum_{x\in\Omega} f(x) \mu(x)$.
Similarly,
the entropy of the function 
with respect to $\mu$ is given by 
\[\Ent_\mu(f) := \E_\mu\Big[f \log \frac{f}{\E_\mu [f]}\Big]
= \E_\mu[f \log f] - \E_\mu[f \log(\E_\mu[f])].
\]  
We say that the Markov chain $P$ satisfies a \emph{modified log-Sobolev inequality} (MLSI) with constant $\rho$ if  
for every function $f:\Omega\rightarrow \mathbb{R}_{\ge 0}$, 
\[
\rho \cdot \Ent_\mu(f) \le  \mathcal{E}_P(f,\log f).
\]
The smallest $\rho$ satisfying the inequality above is 
called the \emph{modified log-Sobolev constant} of $P$ and is denoted by $\rho(P)$.
A well-known general relationship (see \cite{DS96, BT03}) shows that 
\begin{equation}
	\label{eq:gapmlsi}
	\frac{1-2\mu_{min}}{\log(1/\mu_{min} - 1)} \gap(P) 
	\le \rho(P) \le 2\gap(P).
\end{equation}
For distributions $\mu$ and $\nu$ over $\Omega$, 
the relative entropy of $\nu$ with respect to $\mu$, denoted as $\mathcal H(\nu \mid \mu)$, is defined as
$\mathcal H(\nu \mid \mu) := \sum_{x \in \Omega} \nu(x) \log \frac{\nu(x)}{\mu(x)}$.
A Markov chain $P$ with stationary distribution $\mu$ is said to satisfy discrete \emph{relative entropy decay} with rate $r > 0$ if for all distributions $\nu$:
\begin{equation}
\label{eq:dec}
    \mathcal H(\nu P \mid \mu) \le (1-r) \mathcal H(\nu \mid \mu).    
\end{equation}
It is a standard fact (see, e.g., Lemma 2.4 in~\cite{BCPSV21}) that when~\eqref{eq:dec} holds, then $\rho(P) \ge r$, and 
\begin{equation}
\label{eq:tmix}
    T_{mix}(P, \epsilon) \le \frac{1}{r} \cdot  \Big(\log \log \big(\frac{1}{\mu_{min}}\big) + \log \big(\frac{1}{2\epsilon}\big) \Big).
\end{equation}

\subsection{General spin system}

We provide next a general definition for spin systems and introduce the notion of totally-connected systems.

\begin{definition}[Spin system]
\label{def:spin-system}
Let $G=(V,E)$ be a graph and $\mathcal S= \{1,\dots, q\}$ a set of spins.  
Let $\Omega \subseteq \mathcal S^{V}$ be the set of possible spin configurations on $G$.
We write $\sigma_v$ for the spin assigned to $v$ by $\sigma$.
Given a configuration $\sigma \in \Omega$ and a subset $\Lambda$ of $V$, 
we write $\sigma_\Lambda \in \mathcal S^{\Lambda}$ 
for the configuration of $\sigma$ restricted to $\Lambda$. 
For a subset of vertices $\Lambda \subseteq V$, 
a \emph{boundary condition}  $\tau$ 
is an assignment of spins to (some) vertices in outer vertex boundary $\partial \Lambda \subseteq V \setminus \Lambda$ of $\Lambda$; namely, 
$\tau: (\partial\Lambda)_{\tau} \rightarrow \mathcal S$, with $(\partial\Lambda)_{\tau} \subseteq \partial \Lambda$.
Note that a boundary condition is simply a pinning of a subset of vertices identified as being in the boundary of $G$.
Given a boundary condition $\tau:(\partial V)_{\tau} \rightarrow \mathcal S$, the \emph{Hamiltonian} $H:\Omega\rightarrow \mathbb{R}$ of a spin system is defined as
\begin{equation}
\label{eq:spinsystem}
H(\sigma) = -\sum_{\{v,u\}\in E} K(\sigma_v, \sigma_u) 
- \sum_{\{v,u\}\in E: u\in V, v\in (\partial V)_{\tau}} K(\sigma_v, \tau_v) 
-\sum_{v\in V} U(\sigma_v),
\end{equation}
where $K:\mathcal S\times\mathcal S\rightarrow \mathbb{R}$ and $U:\mathcal S\rightarrow \mathbb{R}$ are
respectively the \emph{symmetric edge interaction potential function} and the 
\emph{spin potential function} of the system.
The \emph{Gibbs distribution} of a spin system with Hamiltonian $H$ is defined as
\[
\mu(\sigma) = \frac{1}{Z_H} e^{-H(\sigma)},
\]
where $Z_H := \sum_{\sigma \in \Omega} e^{-H(\sigma)}$.
We use $\Omega$ for the set of configurations $\sigma$ satisfying
$\mu(\sigma) > 0$.
\end{definition}
The Potts model, as defined in the introduction, corresponds to the spin system with $q\ge 2$, $K(x,y)= \beta\cdot \mathbbm{1}(x = y)$, and $U(\sigma_v)= 0$ for all $v\in V$. 
We focus on the ferromagnetic Ising model where $\beta >0$ and $\mathcal{S} = \{-1, +1\}$. 
Another important spin system is the hardcore model that can be defined by setting $\mathcal{S} = \{1, 0\}$, 
$K(x,y)=\infty$ if $x=y=1$ and $K(x,y)=0$ otherwise, 
and $U(x)= \mathbbm{1}(x=1) \cdot \ln \lambda$, where $\lambda >0$ is referred to as the \emph{fugacity} parameter of the model.

We restrict attention to \emph{totally-connected} spin systems, as this ensures that the Glauber dynamics, the systematic scan dynamics, and the block dynamics are all irreducible Markov chains (and thus ergodic).

\begin{definition}
    \label{def:tc}
    For a subset $\mathcal C_U$ of partial configurations on $U \subseteq V$, 
    let $H[\mathcal C_U] = (\mathcal C_U, E[\mathcal C_U])$ be the induced subgraph where $E[\mathcal C_U]$ consists of all pairs of configurations 
    on $\mathcal C_U$ that differ at exactly one vertex.
    We say that $\mathcal C_U$ is connected when $H[\mathcal C_U]$ is connected.
For a pinning $\tau$ on $\Lambda \subseteq V$, 
we say $\Omega^\tau_{V \setminus \Lambda}$ is {connected} if $H[\Omega^\tau_{V \setminus \Lambda}]$ is connected.  
A distribution $\mu$ over $\mathcal{S}^V$ is \emph{totally-connected} if 
for every $\Lambda \subseteq V$ and every pinning $\tau$ on $\Lambda$, $\Omega^\tau_{V \setminus\Lambda}$ is connected.
\end{definition}

\section{Swendsen-Wang dynamics on general graphs}
\label{sec:globalising}

In this section, we consider the SW dynamics
for the $q$-state ferromagnetic Potts models on general graphs. In particular, we establish Theorem~\ref{thm:SW:intro} from the introduction, which is a direct corollary of the following more general result.

\begin{theorem}
	\label{thm:SW}
    Let $q\ge 2$, $\beta > 0$, $\eta$ > 0, $b > 0$, $\Delta \ge 3$, and $\chi \ge 2$.   
    Suppose $G= (V,E)$ is an $n$-vertex graph of maximum degree $\Delta$ and chromatic number $\chi$. 
	Let $\potts$ be the Gibbs distribution of the $q$-state ferromagnetic Potts model on $G$ with parameter $\beta$. 
	 If $\potts$ is $\eta$-spectrally independent and 
    $b$-marginally bounded,
    then there exists a universal constant $C > 1$ such that
    the modified log-Sobolev constant of the SW dynamics satisfies:
	$$\rho(P_{SW}) = \Omega\left(\frac{b^{2+6\kappa}}{\chi \cdot (C \Delta \log n )^{\kappa} \cdot (\eta+1)^{5\kappa}}\right),
	$$where $\kappa = 2 +\lceil\frac{2\eta}{b}\rceil $, and 	
 \[T_{mix}(P_{SW}) = O\big(\chi \cdot (C \Delta\log n)^{\kappa} \cdot  (\eta+1)^{5\kappa} b^{-2-6\kappa}  \cdot \log n \big).\]
\end{theorem}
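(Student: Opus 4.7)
The plan is to derive Theorem~\ref{thm:SW} from two building blocks: (i) an improved $k$-partite factorization of entropy (KPF) whose constant depends only \emph{polynomially} on $\Delta$, and (ii) the edge-spin factorization framework of~\cite{BCPSV21} that converts KPF (instantiated on a proper $\chi$-coloring of $G$) into a modified log-Sobolev inequality for the SW dynamics.

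For step~(ii), fix a proper $\chi$-coloring of $G$, inducing a partition $V = V_1 \sqcup \dots \sqcup V_\chi$ into independent sets. The edge-spin factorization of~\cite{BCPSV21} shows that if $\pi := \potts$ satisfies
\[
\Ent_\pi(f) \le C_{\mathrm{KPF}} \sum_{i=1}^{\chi} \pi\bigl[\Ent_{\pi(\cdot\mid\sigma_{V\setminus V_i})}(f)\bigr]
\]
for all $f \ge 0$, then, after lifting $\pi$ to the FK-SW joint distribution on spin/edge configurations, one obtains $\rho(P_{SW}) = \Omega(1/(\chi \cdot C_{\mathrm{KPF}}))$, up to lower-order factors absorbed into the $b$ dependence. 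The mixing time bound then follows from~\eqref{eq:tmix} after noting that $\log\log(1/\pi_{\min}) = O(\log n)$ under the stated conditions, since $\log(1/\pi_{\min}) \le \beta|E| + n\log q$ is polynomial in $n$.

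For step~(i), which is the main technical work, I would follow the multi-scale approach sketched in the introduction. A direct reduction of KPF to spectral independence (via a single ``local-to-global'' step in which block entropy is controlled by single-site information) loses a factor of $e^{\Omega(\Delta)}$, which is exactly the bound we need to beat. Instead, I would fix a color class $V_i$ and introduce a telescoping family of random blocks $S_1 \supset S_2 \supset \cdots$ whose sizes decrease geometrically, from $|S_1| = \Theta(n)$ down to $|S_L| = 1$. At each scale $\ell$, I apply $\eta$-spectral independence to the conditional distribution $\pi(\cdot \mid \sigma_{V\setminus S_\ell})$ --- which remains $\eta$-spectrally independent and $b$-marginally bounded under arbitrary pinnings by Definition~\ref{def:si} --- to bound the one-step entropy loss by a factor of $1 + O(\eta/b)$. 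Cascading this inequality over $L = O(\log(\Delta \log n))$ scales, and carrying along the accumulated multiplicative loss, yields
\[
C_{\mathrm{KPF}} = O\bigl((C\Delta\log n)^{\kappa}\,(\eta+1)^{5\kappa}\,b^{-2-6\kappa}\bigr),\qquad \kappa = 2 + \lceil 2\eta/b\rceil,
\]
as needed.

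The hardest part is pinning down the exponents $(\eta+1)^{5\kappa}$ and $b^{-6\kappa}$ with the stated shape. This requires a careful use of the spectral bound on the influence matrix $\Psi^\tau_\pi$ --- applied to every conditional distribution along the recursion --- together with a tight accounting of how the marginal bound $b$ degrades after each scale, and of how the residual ``boundary'' terms telescope when one reassembles the multi-scale factorization into a single block-wise KPF inequality. Once $C_{\mathrm{KPF}}$ with the stated form is established, the remainder of the argument is routine: plug the bound into the edge-spin factorization theorem of~\cite{BCPSV21}, convert the resulting MLSI to a mixing time via~\eqref{eq:tmix}, and observe that the $\chi$ factor in the denominator of $\rho(P_{SW})$ and the numerator of $T_{mix}(P_{SW})$ simply reflects the number of color classes used in the $k$-partite factorization.
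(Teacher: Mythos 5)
Your step (ii) is essentially the paper's route (KPF on a proper $\chi$-coloring $\to$ edge--spin factorization $\to$ MLSI for SW via Lemmas~\ref{lemma:kpf2es} and~\ref{lemma:es2sw}, then \eqref{eq:tmix}); the only imprecision there is that the KPF-to-edge-spin step costs an explicit factor $O(\beta\Delta\chi e^{\beta\Delta})$, which the paper controls by $\beta\Delta e^{\beta\Delta}\le b^{-2}$ --- your phrase ``absorbed into the $b$ dependence'' is the right idea but should be made quantitative, since it is exactly where the extra $b^{-2}$ in the theorem comes from. The genuine gap is in step (i): you do not actually prove the KPF bound $C_{\mathrm{KPF}}=\bigl(C(\eta+1)^5\Delta\log n/b^6\bigr)^{\kappa}$, which is the entire technical content of the theorem. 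Your proposed mechanism --- a geometric cascade of blocks from size $\Theta(n)$ down to size $1$ over $L=O(\log(\Delta\log n))$ scales, losing a factor $1+O(\eta/b)$ per scale --- is both internally inconsistent (a geometric decrease from $\Theta(n)$ to $1$ takes $\Theta(\log n)$ scales, not $O(\log(\Delta\log n))$) and unsupported: the known local-to-global/annealing estimate behind Theorem~\ref{thm:SI2UBF} loses a factor of order $2^{\Theta(\eta/b)}$ per halving of the block size, not $1+O(\eta/b)$, and cascading that all the way down to singletons simply reproduces the crude $n^{\Theta(\eta/b)}$ bound you set out to beat. A per-scale loss of $1+O(\eta/b)$ would in fact yield a bound \emph{better} than the paper's, and nothing in spectral independence as defined gives it; the paragraph in which you defer ``pinning down the exponents'' is precisely the missing proof.

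The paper's argument has a different, two-scale structure that your sketch never engages with. One stops the UBF step at block fraction $\theta=\frac{1}{5e\Delta}$ (cost $(e/\theta)^{\lceil 2\eta/b\rceil}=(5e^2\Delta)^{\lceil 2\eta/b\rceil}$ by Theorem~\ref{thm:SI2UBF}), and then exploits three facts about a uniformly random set $S$ of size $\lceil\theta n\rceil$: (a) conditional on any pinning of $V\setminus S$, the measure on $S$ is a product over the connected components of $S$, so entropy tensorizes over components (Lemmas~\ref{lemma:cprent} and~\ref{lemma:lote}, assembled in Lemma~\ref{lemma:proofstep3}); (b) a percolation-type tail bound (Lemma~\ref{lemma:g1}) shows that with $\theta\lesssim 1/\Delta$ all components have size $O(\log n)$ with exponentially decaying tails; and (c) inside a component $S_i$ one can afford the \emph{crude} spectral-independence-to-KPF bound, polynomial in $|S_i|$ (Lemmas~\ref{lemma:CGSVgap} and~\ref{lemma:gap2KPF}), because $|S_i|=O(\log n)$. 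The $(\Delta\log n)^{\kappa}$ base in the final constant is the product of the $\Delta^{O(\eta/b)}$ UBF factor and the $(\log n)^{\kappa}$ within-component factor; without the component decomposition and the tail estimate, no mechanism in your cascade produces a $\log n$ (rather than $n$) base. As written, the proposal restates the target bound and the general philosophy but omits the argument that achieves it.
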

\noindent
Theorem~\ref{thm:SW:intro} follows from this theorem by noting that 
$\chi \le \Delta$ and that
under the assumptions $\eta = O(1)$ and $\beta \Delta = O(1)$, we have $b = O(1)$ and $\kappa = O(1)$.
\begin{remark}
    When $\Delta$ is small, i.e., $ \Delta = o(\log n)$, we can obtain slightly better bounds on  
    $\rho(P_{SW})$ and $T_{mix}(P_{SW})$ and replace the
    $(C \Delta\log n)^{\kappa}$ factor
    by a factor of $(C\Delta)^{8 + 4\lceil\frac{2\eta}{b}\rceil}$.
\end{remark}
Before proving Theorem~\ref{thm:SW}, we provide a number of definitions and required background results in Section~\ref{sec:factorization}. 
We then give the proof of Theorem~\ref{thm:SW} in Sections~\ref{subsec:sw:main:thm},~\ref{sec:maint}, and~\ref{subsect:sw:aux}, and include some applications of this result in Section~\ref{subsec:sw:app}.

\subsection{Factorization of entropy}
\label{sec:factorization}

 We present next several factorizations of the entropy functional $\Ent_\mu(f)$, which are instrumental in establishing the decay of the relative entropy for the SW dynamics. 
We introduce some useful notations first.
For a pinning $\tau$ in $V\setminus\Lambda$ (i.e., $\tau\in \Omega_{V\setminus\Lambda}$), we let $\mu_\Lambda^\tau(\cdot) := \mu(\cdot\mid \sigma_{V\setminus \Lambda} = \tau)$.
Given a function $f:\Omega \rightarrow \mathbb{R}_{\ge0}$, 
subsets of vertices $B \subseteq \Lambda \subset V$, 
and $\tau \in \Omega_{V \setminus \Lambda}$, 
	the function $f^{\tau}_B  : \Omega_{B}^\tau \rightarrow\mathbb{R}_{\ge0}$ is defined by: 
	$$
	f^{\tau}_B (\sigma) = \E_{\xi \sim \mu^\tau_{\Lambda \setminus B}} [f(\tau \cup \xi \cup \sigma)].
	$$If $B= \Lambda$, we often write $f^\tau$ for $f^{\tau}_B $, and if $\tau = \emptyset$, 
then we use $f_B$ for $f^{\tau}_B $. 
We use $\Ent^\tau_B(f^\tau)$ to denote $\Ent_{\mu^\tau_B}(f^\tau)$, and
if the pinning $\tau$ on $V\setminus B$ is from a distribution $\pi$ over $\Omega_{V\setminus B}$,
we use $\E_{\tau\sim\pi}[\Ent^\tau_B(f^\tau)]$ to denote the expected value
of the function $f$ on $S$ over the random pinning $\tau$.

Various forms of entropy factorization arise from bounding $\Ent_\mu (f)$ by different (weighted) sums of restricted
entropies of the function $f$.
The first one we introduced, is the so-called \emph{$\ell$-uniform block factorization of entropy} of \emph{$\ell$-UBF}.
For an integer $\ell \le n$, $\ell$-UBF holds for 
$\mu$ with constant $C_{\mathrm{UBF}}$ if for all functions $f : \Omega \rightarrow \mathbb{R}_{\ge0}$, 
\begin{equation}
\label{eq:ubf}
\frac{\ell}{n} \cdot \Ent_\mu (f) \le C_{\mathrm{UBF}} \cdot \frac{1}{\binom{n}{\ell}} \sum_{S \in \binom{V}{\ell}}
\E_{\tau \sim \mu_{V \setminus S}} \left[\Ent^\tau_{S} (f^\tau) \right],
\end{equation}
where $\binom{V}{\ell}$ denotes the collection of all subsets of $V$ of size $\ell$.
An important special case is when $\ell=1$, in which case~\eqref{eq:ubf} is called \emph{approximate tensorization of entropy (AT)}; this special case has been quite useful for establishing optimal mixing time bounds for the Glauber dynamics in various settings (see, e.g., \cite{Marton19,CMT15,Cesi,Martinelli}).
In recent works, a key step for obtaining AT has been to first establish $\ell$-UBF for some large $\ell$. The following result will be useful for us. 
\begin{theorem}[\cite{CLV21}, \cite{BCCPSV22}]
	\label{thm:SI2UBF}
 Let $b$ and $\eta$ be fixed.
 For ~$\theta\in(0,1)$ and $n \ge \frac{2}{\theta} (\frac{4\eta}{b^2} + 1)$, 
 the following holds.
	If the Gibbs distribution $\mu$ of a totally-connected spin system
	on an $n$-vertex graph 
	is $\eta$-spectrally independent and $b$-marginally bounded, then 
	$\lceil \theta n \rceil$-UBF holds with 
    $C_{\mathrm{UBF}}=(e/\theta)^{\lceil\frac{2\eta}{b}\rceil}$. 
\end{theorem}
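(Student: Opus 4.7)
The plan is to reduce $\lceil\theta n\rceil$-UBF to a contraction statement for the $n \to \ell$ \emph{down walk} and then establish that contraction by telescoping a sharp one-step estimate derived from spectral independence. Recall that the $n \to \ell$ down walk $D_{n\to\ell}$ takes a configuration $\sigma \in \Omega$, picks a uniformly random $S \in \binom{V}{\ell}$, and outputs $\sigma_S$; its time reversal is a channel on $\mu$. A standard duality (see, e.g., the AJKPV framework) shows that $\ell$-UBF with constant $C_{\mathrm{UBF}}$ is equivalent to the KL-contraction estimate $\mathcal{H}(\nu D_{n\to\ell} \mid \mu D_{n\to\ell}) \le \bigl(1 - \tfrac{\ell}{n C_{\mathrm{UBF}}}\bigr)\mathcal{H}(\nu \mid \mu)$ for every $\nu$. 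So it suffices to bound the multiplicative loss of KL-divergence along the down walk.

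First, I would establish the one-step contraction. For any fixed pinning $\tau$ of $V\setminus U$ with $|U|=k$, the conditional distribution $\mu^\tau_U$ inherits $\eta$-spectral independence and $b$-marginal boundedness (since SI and MB quantify over all further pinnings, and totally-connectedness is preserved). Applying the standard SI-to-entropic-independence translation, which says that $\eta$-SI plus $b$-MB forces the signed influence matrix to have trace and operator norm controllable by $b^{-1}$, I would deduce that the single-site averaging operator at scale $k$ contracts entropy by a factor of at least $1 - \frac{1+2\eta/b}{k}$ (uniformly over the pinning $\tau$). This is the key one-step estimate; it is where spectral independence does all the work.

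Second, I would telescope the one-step estimates along the chain of down steps $n \to n-1 \to \cdots \to \ell$. Since each intermediate pinned distribution remains $\eta$-SI, $b$-MB, and totally-connected, the one-step estimate applies at every scale $k \in \{\ell+1, \ldots, n\}$, giving
\[
\mathcal{H}(\nu D_{n\to\ell} \mid \mu D_{n\to\ell}) \;\le\; \prod_{k=\ell+1}^{n} \Bigl(1 - \tfrac{1+2\eta/b}{k}\Bigr) \cdot \mathcal{H}(\nu \mid \mu).
\]
Using the standard estimate $\prod_{k=\ell+1}^{n}(1 - c/k) \le (\ell/n)^{c}\cdot e^{c}$ for $c = 1 + 2\eta/b$ (valid once $n \ge \tfrac{2}{\theta}(\tfrac{4\eta}{b^2}+1)$, which is precisely the hypothesis of the theorem and prevents the factors from being vacuous when $k$ is small), the product is bounded by $(e\cdot n/\ell)^{2\eta/b}\cdot (\ell/n)$. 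Plugging back into the duality gives $C_{\mathrm{UBF}} \le (e/\theta)^{2\eta/b}$, and replacing $2\eta/b$ by $\lceil 2\eta/b\rceil$ in the exponent (which only weakens the bound) yields the stated constant.

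The main obstacle will be the one-step estimate: the SI hypothesis is a spectral statement about a \emph{linear} influence operator, while we need a \emph{nonlinear} contraction of KL-divergence. The conversion goes through entropic independence, and the loss in this conversion is exactly where the factor $2\eta/b$ (as opposed to $\eta$) enters. Beyond this, one must verify uniformly over pinnings that the entropic independence constant does not blow up; here the lower bound $n \ge \frac{2}{\theta}(\frac{4\eta}{b^2}+1)$ ensures every intermediate distribution is large enough that the asymptotic $(1 - c/k)$ bound is meaningful. The remaining steps are routine bookkeeping with the chain rule for entropy and the definitions of the down walk.
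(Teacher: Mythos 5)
A framing note first: the paper does not actually prove this theorem --- it imports it verbatim from \cite{CLV21,BCCPSV22} --- so the comparison below is against the cited proof, whose skeleton (a level-by-level entropy inequality derived from spectral independence plus marginal boundedness, telescoped across scales) your plan resembles at a high level. Two of your concrete steps are wrong, however, and together they make the derivation of the constant $(e/\theta)^{\lceil 2\eta/b\rceil}$ incoherent.

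First, the duality is mis-stated. By the chain rule for entropy, for each $S\in\binom{V}{\ell}$ one has $\E_{\tau\sim\mu_{V\setminus S}}[\Ent^\tau_S(f^\tau)]=\Ent_\mu(f)-\Ent_{\mu_{V\setminus S}}(f_{V\setminus S})$, so the right-hand side of $\ell$-UBF equals $\mathcal H(\nu\mid\mu)-\mathcal H(\nu D_{n\to n-\ell}\mid\mu D_{n\to n-\ell})$ with $\nu\propto f\mu$. Hence $\ell$-UBF with constant $C_{\mathrm{UBF}}$ is equivalent to entropy contraction of the down walk to level $n-\ell$ (dropping a random $\theta$-fraction of coordinates), not to level $\ell$ as you wrote; a sanity check is $\ell=n$, where UBF holds trivially with constant $1$ while your version would read $\mathcal H(\nu\mid\mu)\le(1-1/C)\mathcal H(\nu\mid\mu)$. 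Your telescoping range $n\to n-1\to\cdots\to\ell$ is therefore aimed at the wrong quantity.

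Second, and more seriously, the one-step estimate is false. You claim SI and marginal boundedness force $\mathcal H(\nu D_{k\to k-1}\mid\mu D_{k\to k-1})\le\bigl(1-\tfrac{1+2\eta/b}{k}\bigr)\mathcal H(\nu\mid\mu)$. No hypothesis can give a uniform one-step rate better than the product-measure rate $1-\tfrac1k$: if $\mu$ and $\nu$ are both product measures (and product measures are $\eta$-spectrally independent, $b$-marginally bounded and totally-connected), then $\E_{|T|=k-1}[\mathcal H(\nu_T\mid\mu_T)]=\tfrac{k-1}{k}\mathcal H(\nu\mid\mu)$ exactly. In the cited arguments the quantity $1+2\eta/b$ enters with the opposite sign, as a \emph{degradation} of product behaviour (an entropic-independence-type bound for every link, in which the factor $\tfrac1k$ of the product case becomes $\tfrac{1+2\eta/b}{k}$), and the telescoped loss is a falling-factorial ratio of the form $\prod_{j=0}^{\alpha-1}\tfrac{n-j}{\ell-j}\le(e/\theta)^{\alpha}$ with $\alpha=\lceil 2\eta/b\rceil$; the hypothesis $\theta n\ge\tfrac{2}{1}(\tfrac{4\eta}{b^2}+1)\ge 2\alpha$ is what keeps those denominators comparable to $\ell$, not merely a non-vacuity condition. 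Finally, your bookkeeping does not close even on its own terms: $\prod_{k=\ell+1}^{n}(1-\tfrac ck)\le e^{c}(\ell/n)^{c}=e^{c}(\ell/n)^{1+2\eta/b}$, which is not $(en/\ell)^{2\eta/b}(\ell/n)$, and substituting either expression into your own duality would yield $C_{\mathrm{UBF}}$ of order $\theta$ (or a vacuous inequality when the product exceeds $1$), not $(e/\theta)^{\lceil 2\eta/b\rceil}$. To repair the argument you would need the correct equivalence (contraction of $D_{n\to n-\ell}$) together with the genuine, nontrivial SI-to-entropy lemma of \cite{CLV21} (or the entropic-independence route), in which the constant appears as a loss, and then redo the telescoping accordingly.
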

\noindent

Another useful notion is the \emph{$k$-partite factorization of entropy} or KPF. 
Let $ U_1, \dots, U_k$ be $k$ disjoint independent sets of $V$ such that
$\bigcup_{i=1}^k U_i = V$. We say $\mu$ satisfies KPF with constant $C_{\mathrm{KPF}}$ if for all functions 
$f : \Omega \rightarrow \mathbb{R}_{\ge0}$, 
$$
\Ent_\mu (f) \le C_{\mathrm{KPF}} \sum_{i=1}^k
\E_{\tau \sim \mu_{V \setminus U_i}} \left[\Ent^\tau_{U_i} (f^\tau) \right].
$$KPF was introduced in~\cite{BCCPSV22}, where it was used to  analyze global Markov chains.
The interplay between KPF and UBF is intriguing and is further explored in this paper.

\subsection{Proof of main result for the SW dynamics: Theorem~\ref{thm:SW}}
\label{subsec:sw:main:thm}

The main technical contribution in the proof of
Theorem~\ref{thm:SW} is establishing KPF with a better (i.e., smaller) constant $C_{\mathrm{KPF}}$.
As in~\cite{BCCPSV22}, KPF is then used to derive an improved ``edge-spin'' factorization 
of entropy which is known to imply the desired bounds on the modified log-Sobolev constant and on
the mixing time of the SW dynamics.

\begin{theorem}
	\label{thm:SI2KPF}
	For a totally-connected and $b$-marginally bounded Gibbs distribution $\mu$ that satisfies $\eta$-spectral independence 
	on an $n$-vertex graph $G=(V,E)$ of maximum degree $\Delta \ge 3$, 
    if ~$b$ and $\eta$ are constants independent of $\Delta$ and $n$, 
    then there exists a constant $c=c(\eta,b)>0$ such that
    $k$-partite factorization of entropy holds for $\mu$ with constant
    $C_{\mathrm{KPF}} =(\Delta \log n)^{c}$.
	Specifically, 
 for a set of $k$ disjoint independent sets $V_1, \dots, V_k$ such that $\bigcup_{j=1}^k V_j = V$, we have
  \begin{align}
	\Ent_\mu(f) &\le 
          \Big(\frac{C (\eta+1)^5 \Delta\log n}{b^6} \Big)^{\kappa} 
	\cdot \sum_{j=1}^{k} \E_{\tau\sim \mu_{V \setminus V_j}}[\Ent^\tau_{V_j} (f^\tau)],~\text{and} \label{eq:SI2KPF2}\\
     \Ent_\mu(f) &\le  \Big(\frac{C (\eta+1)^5 \Delta^{4}}{b^{6}} \Big)^{\kappa} \cdot  
	 \sum_{j=1}^{k} \E_{\tau\sim \mu_{V \setminus V_j}}[\Ent^\tau_{V_j} (f^\tau)], \label{eq:SI2KPF1}
\end{align}
where $\kappa := 2+\lceil\frac{2\eta}{b}\rceil$ and $C > 0$ is a universal constant. 
\end{theorem}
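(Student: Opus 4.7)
The plan is to prove KPF by a multi-scale iteration of the $\ell$-UBF inequality from Theorem~\ref{thm:SI2UBF}. The key observation enabling the recursion is that spectral independence and marginal boundedness are preserved under arbitrary pinnings: for any pinning $\tau$ on $V\setminus S$, the conditional distribution $\mu^\tau_S$ is itself the Gibbs measure of a spin system on $S$ that inherits the same $\eta$-SI and $b$-MB (and also totally-connectedness, by Definition~\ref{def:tc}). Hence Theorem~\ref{thm:SI2UBF} can be invoked not only at the top scale but on every restricted subsystem encountered along the iteration.

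For the iteration, I would first apply $\lceil\theta n\rceil$-UBF at the top level, bounding
\[
\Ent_\mu(f)\;\le\;\tfrac{1}{\theta}\bigl(\tfrac{e}{\theta}\bigr)^{\lceil 2\eta/b\rceil}\,\E_{S}\,\E_{\tau\sim \mu_{V\setminus S}}\bigl[\Ent^{\tau}_{S}(f^{\tau})\bigr]
\]
for $|S|=\lceil\theta n\rceil$, and then reapply $\ell$-UBF to each $\mu^{\tau}_{S}$, shrinking the working block by a further factor $\theta$ at each scale. After $T$ scales the accumulated prefactor is $\bigl(\tfrac{1}{\theta}(e/\theta)^{\lceil 2\eta/b\rceil}\bigr)^{T}$ and the working block has size $\theta^{T}n$. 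The two bounds in the theorem correspond to two different schedules: for~\eqref{eq:SI2KPF2} with the $\Delta\log n$ factor, I would take $\theta$ constant and iterate $O(\log n)$ times until blocks are of size $O(\log n)$, so the $\log n$ factor in the base comes from the number of scales and the $\Delta$ factor from the terminal conversion; for~\eqref{eq:SI2KPF1} with the $\Delta^{4}$ factor, I would use the aggressive schedule $\theta\sim 1/\Delta$ and only a constant number of scales, so the accumulated constant is polynomial in $\Delta$ with no $\log n$ term.

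At the terminal scale, the residual small-block entropies $\Ent^{\tau_T}_{S_T}(f^{\tau_T})$ must be re-assembled into the KPF right-hand side $\sum_{j}\E_{\tau\sim \mu_{V\setminus V_j}}[\Ent^{\tau}_{V_j}(f^{\tau})]$. For this, I exploit that each $V_j$ is an independent set, so conditioning on $V\setminus V_j$ produces a product distribution on $V_j$; for product distributions, entropy tensorizes exactly, and single-vertex contributions inside $V_j$ aggregate cleanly into $\Ent^{\tau}_{V_j}(f^{\tau})$. The main obstacle will be the bookkeeping of the repeated averaging across scales: the pinnings accumulated through the iteration tree must be shown to compose correctly into pinnings on $V\setminus V_j$ when re-integrated, without introducing spurious $n$-dependence. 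This will be handled by iterated tower-property arguments together with the partition structure of $\{V_j\}_{j=1}^{k}$, and by carefully choosing the schedule so that the number of scales and the terminal-conversion cost combine into exactly the exponent $\kappa=2+\lceil 2\eta/b\rceil$ claimed in the theorem.
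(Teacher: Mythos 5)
There is a genuine gap, and it is quantitative as well as structural. Your scheme is a pure iteration of $\ell$-UBF, but the constants do not come out: each scale costs a multiplicative factor of order $\tfrac{1}{\theta}(e/\theta)^{\lceil 2\eta/b\rceil}$, so with $\theta$ constant and $T=\Theta(\log n)$ scales the accumulated prefactor is $c_0^{\Theta(\log n)}=n^{\Theta(1)}$, not $(\Delta\log n)^{\kappa}$ --- iterating multiplies per-scale constants, it does not place $\log n$ in the base. More generally, any schedule that drives the block size from $n$ down to $\mathrm{polylog}(n)$ purely by paying UBF costs must have $\prod_i\theta_i\approx \tfrac{\log n}{n}$, hence accumulates at least $\prod_i \theta_i^{-1}\gtrsim n/\mathrm{polylog}(n)$, which is polynomial in $n$; similarly, your schedule for \eqref{eq:SI2KPF1} ($\theta\sim 1/\Delta$, constantly many scales) leaves blocks of size $n/\Delta^{O(1)}$, still polynomial in $n$, and the terminal conversion on such blocks is not free. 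Your terminal step also does not close the argument: the leftover blocks are arbitrary subsets, not subsets of a single $V_j$, so ``conditioning on $V\setminus V_j$ gives a product measure and entropy tensorizes'' does not convert a block entropy into the $k$-partite sum; that conversion is itself a KPF inequality inside the block and needs a quantitative constant.

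The paper's proof avoids all of this with a single application of UBF at density $\theta=\tfrac{1}{5e\Delta}$, chosen below the percolation threshold so that (Lemma~\ref{lemma:g1}) the connected components of the random block $S$ have size $O(\log n)$ (in the sense of~\eqref{eq:proofstep14}). Entropy factorizes exactly across these components because $\mu^\tau_S$ is a product over them (Lemmas~\ref{lemma:cprent} and~\ref{lemma:lote}, packaged as Lemma~\ref{lemma:proofstep3}), so shrinking below scale $\theta n$ costs nothing. The decisive ingredient --- absent from your proposal --- is that \emph{within each small component} one gets a KPF constant polynomial in the component size, by combining spectral independence $\Rightarrow$ Glauber spectral gap (Lemma~\ref{lemma:CGSVgap}, using that SI and marginal boundedness survive pinnings, as you correctly note) with spectral gap $\Rightarrow$ KPF (Lemma~\ref{lemma:gap2KPF}). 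This yields $\Gamma(S_i)\lesssim |S_i|^{\kappa}=(\log n)^{\kappa}$, and the partite reassembly is done by Lemma~\ref{lemma:proofstep3}, giving the $\Delta^{\kappa}$ from the single UBF constant and the $(\log n)^{\kappa}$ from the component sizes; the $\Delta^{4\kappa}$ bound~\eqref{eq:SI2KPF1} comes from rerunning this with $\theta\sim 1/\Delta^2$ and a distance-two component argument, not from a shorter iteration. To repair your write-up you would need to abandon the multi-scale recursion and incorporate this component-decomposition plus SI-to-gap-to-KPF mechanism (or something playing the same role of a $\mathrm{poly}(|S_i|)$ within-block KPF bound).
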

\begin{remark}
	\label{remark:kpf}
 Let  $\mathcal{B} = \{B_1,\dots, B_k\}$ be a collection of disjoint independent sets such that $V = \bigcup_{i=1}^k B_i$.
The independent set dynamics $P_{\mathcal{B}}$ 
is a heat-bath block dynamics w.r.t. $\mathcal{B}$ and a uniform distribution over $\mathcal{B}$.	If $\mu$ satisfies $k$-partite factorization of entropy with $C_{\mathrm{KPF}}$,
    then $P_\mathcal{B}$ satisfies a 
    relative entropy decay with rate $r\ge 1/(k\cdot C_{\mathrm{KPF}})$. 
    See Lemma~\ref{prop:GBF} for the more general statement.
\end{remark}

As mentioned, KPF was first studied in \cite{BCCPSV22}; 
the constant proved there was  
\[C_{\mathrm{KPF}} = b^{-O(\Delta)} \cdot (\Delta/b)^{O(\eta/b)},\] 
so our new bound improves the dependence on $\Delta$ from exponential to polynomial.
The proof of Theorem~\ref{thm:SI2KPF} is given in two parts. In Section~\ref{sec:maint}, we prove~\eqref{eq:SI2KPF2}, whereas~\eqref{eq:SI2KPF1} is proved in Appendix~\ref{sec:SI2KPF1}.

With KPF on hand, the next step in the proof of Theorem~\ref{thm:SW} relies on the 
so-called edge-spin factorization of entropy.
Let $\Omega_J := \Omega \times \{0,1\}^{E}$ be the set of joint configurations $(\sigma, A)$ corresponding to pairs of a spin configuration $\sigma\in \Omega$ and an \emph{edge configuration} (a subset of edges in a graph) $A\subseteq E$.
For a $q$-state Potts model $\potts$ with parameter $p=1-e^{-\beta}$, 
we use $\nu$ to denote the \emph{Edwards-Sokal} measure on $\Omega_J$ given by
$$
\nu(\sigma,A) := \frac{1}{Z_J} (1-p)^{|E|-|A|} p^{|A|} \mathbf{1}(\sigma\sim A),
$$where $\sigma \sim A$ is the event that every edge in $A$ has its two endpoints with the same spin in $\sigma$, 
and $Z_J:=\sum_{(A,\sigma)\in \Omega_J}(1-p)^{|E|-|A|} p^{|A|} \mathbf{1}(\sigma\sim A)$ is a normalizing constant.
Let $\nu(\cdot \mid \sigma)$ and $\nu(\cdot \mid A)$ denote 
the conditional measures obtained from $\nu$ by fixing the spin configuration to be $\sigma$ or fixing the edge configuration to be $A$ respectively.
For a function $f : \Omega_J \rightarrow \mathbb{R}_{\ge0}$,
let $f^{\sigma} : \{0,1\}^{|E|} \rightarrow \mathbb{R}_{\ge0} $ be the function given by $f^\sigma(A)= f(\sigma \cup A)$, and
let $f^{A} : \Omega \rightarrow \mathbb{R}_{\ge0} $ be the function given by $f^A(\sigma)= f(\sigma \cup A)$.
We say that \emph{edge-spin factorization of entropy} holds with constant $C_{\mathrm{ES}}$ if for all functions $f:\Omega_J \rightarrow \mathbb{R}_{\ge0}$, 
\begin{equation}
	\label{eq:es}
	\Ent_\nu(f) \le C_{\mathrm{ES}} \left(
		\E_{(\sigma,A) \sim \nu} \left[
			\Ent_{A\sim \nu(\cdot \mid \sigma)} (f^\sigma) 
		 \right] + 
		 \E_{(\sigma,A) \sim \nu} \left[
			\Ent_{\sigma\sim \nu(\cdot \mid A)} (f^A) 
		 \right]
	 \right).
\end{equation}
\noindent 
The following result from \cite{BCCPSV22} will be useful for us.

\begin{lemma}[Theorem 6.1 \cite{BCCPSV22}]
	\label{lemma:kpf2es}
	Suppose the $q$-state ferromagnetic Potts model with parameter $\beta$ on a graph $G$ of maximum degree is $\Delta \ge 3$ satisfies KPF with constant $C_{\mathrm{KPF}}$.
	Then, the edge-spin factorization of entropy holds with constant $C_{\mathrm{ES}} = O(\beta \Delta k e^{\beta \Delta}) \cdot C_{\mathrm{KPF}} $. 
\end{lemma}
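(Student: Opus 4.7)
The plan is to reduce the edge-spin factorization to KPF via the chain rule for entropy, then handle the per-independent-set contributions through a local edge-spin comparison on stars. Throughout I will use the two facts that $\nu$ has spin marginal $\potts$, and that conditioned on the edge configuration $A$, the spin configuration is uniform on the connected components of $(V,A)$.

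First I apply the chain rule for entropy to the joint measure $\nu(\sigma,A) = \potts(\sigma) \cdot \nu(A \mid \sigma)$. Setting $g(\sigma) := \E_{A \sim \nu(\cdot \mid \sigma)}[f(\sigma,A)]$, this gives
\[
\Ent_\nu(f) \;=\; \Ent_{\potts}(g) \;+\; \E_{\sigma \sim \potts}\bigl[\Ent_{A \sim \nu(\cdot \mid \sigma)}(f^\sigma)\bigr].
\]
The second term already matches the first term on the right-hand side of \eqref{eq:es}. To handle the first term I invoke KPF (the hypothesis): for any partition of $V$ into independent sets $V_1,\dots,V_k$,
\[
\Ent_{\potts}(g) \;\le\; C_{\mathrm{KPF}} \sum_{j=1}^{k} \E_{\tau \sim \potts_{V \setminus V_j}}\bigl[\Ent^{\tau}_{V_j}(g^\tau)\bigr].
\]

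The technical heart of the proof is bounding each $\Ent^{\tau}_{V_j}(g^\tau)$ by the ``spins given edges'' term on the right-hand side of \eqref{eq:es}. Since $V_j$ is independent, under any pinning $\tau$ on $V \setminus V_j$ the measure $\potts^{\tau}_{V_j}$ is a product over $v \in V_j$, so the entropy tensorizes as $\Ent^{\tau}_{V_j}(g^\tau) \le \sum_{v \in V_j} \E_\tau\bigl[\Ent_{\sigma_v}(g^\tau)\bigr]$. For each individual vertex $v$, I re-express the single-site spin resampling through the Edwards--Sokal representation on the star at $v$: first resample the edges $E_v$ incident to $v$ given $\sigma_v$ and the neighbors' spins, then resample $\sigma_v$ given those edges and the neighbors' spins. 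The chain rule again decomposes the per-vertex spin entropy into a spin-given-edges piece plus an edge-given-spin piece; only the spin-given-edges piece needs attention, because the edge-given-spin piece on the star is dominated by the full edge-given-spin entropy already accounted for in Step 1. A pointwise comparison between the local single-site conditional $\potts^\tau_v(\sigma_v = s)$ and its ``edge-aware'' counterpart $\nu(\sigma_v = s \mid A, \sigma_{V \setminus \{v\}})$ then transports the per-vertex entropy into the global ``spins given edges'' entropy term $\E_{(\sigma,A)\sim\nu}[\Ent_{\sigma\sim\nu(\cdot\mid A)}(f^A)]$. Summing the per-vertex bounds over $v \in V_j$ and then over $j = 1,\dots,k$ yields \eqref{eq:es}.

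The main obstacle is producing the factor $O(\beta \Delta \, e^{\beta \Delta})$ in the per-vertex comparison. This factor arises because the single-site Potts conditional at $v$ can place weight as large as $\tfrac{e^{\beta \Delta}}{e^{\beta \Delta} + q - 1}$ on a single spin while giving weight at least $\tfrac{1}{e^{\beta \Delta} + q - 1}$ to any other spin, so the density ratio between $\potts^\tau_v$ and the ``uniform on allowed colors'' distribution dictated by a typical edge configuration is at most $e^{\beta \Delta}$; the extra $\beta \Delta$ arises from smoothing the edge resampling over the $\le \Delta$ incident edges. The factor of $k$ comes directly from summing the KPF expansion over the $k$ independent sets, and combining with $C_{\mathrm{KPF}}$ from Step 2 yields the stated $C_{\mathrm{ES}} = O(\beta \Delta k e^{\beta \Delta}) \cdot C_{\mathrm{KPF}}$.
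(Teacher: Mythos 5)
You should note first that the paper does not prove Lemma~\ref{lemma:kpf2es} at all: it is quoted from \cite{BCCPSV22} (Theorem~6.1 there), and the only original content in this paper is the remark that the factor $\Delta$ in the published constant $O(\beta\Delta^2 e^{\beta\Delta})\cdot C_{\mathrm{KPF}}$ is really the number $k$ of independent sets, kept as $k$ here because $\Delta$ is unbounded. Measured against that cited proof, your skeleton is the right one and essentially the same: chain rule for the Edwards--Sokal measure through the spin marginal, KPF applied to $g(\sigma)=\E_{A\sim\nu(\cdot\mid\sigma)}[f(\sigma,A)]$, a per-independent-set comparison contributing $O(\beta\Delta e^{\beta\Delta})$, and the factor $k$ from summing over the sets. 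Steps 1 and 2 of your outline are correct as stated.

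The gap is in your Step 3, which is the entire technical content of the lemma and is asserted rather than proved. After tensorizing over $v\in V_j$, you ``transport'' each per-vertex spin entropy into the global term $\E_{(\sigma,A)\sim\nu}[\Ent_{\sigma\sim\nu(\cdot\mid A)}(f^A)]$ and then sum over $v$ and $j$. No generic entropy inequality licenses this aggregation: if each per-vertex term is bounded by the full global term you pick up a factor $|V_j|$ (up to $n$), not $k$; and if instead you mean that the per-vertex pieces add up to the global entropy, that is the reverse of subadditivity, which is false in general --- for two independent uniform bits and $f=\1(x_1=x_2)$ the two averaged conditional entropies sum to $\log 2$ while $\Ent(f)=\tfrac12\log 2$. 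Handling this aggregation (in particular vertices of $V_j$ lying in a common $A$-component, where $\nu(\cdot\mid A)$ is far from a product over vertices) is precisely what the proof in \cite{BCCPSV22} has to do, and it is absent here. Two further points: your claim that the star edge-given-spin pieces are ``already accounted for'' by the Step-1 term is wrong as stated --- those pieces arise inside the KPF sum and hence carry the factor $C_{\mathrm{KPF}}$ (and the per-set constant), so they cannot be absorbed into the Step-1 term, which enters with constant $1$; they must be compared to the global edge-given-spin entropy and folded into $C_{\mathrm{ES}}$. And the specific factor $\beta\Delta e^{\beta\Delta}$, which is the quantitative point of the lemma (note $\beta\Delta=O(1)$ in all applications), is only motivated by a heuristic about density ratios; the actual comparison argument producing it needs to be carried out.
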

\begin{remark}
	The original bound for $C_{\mathrm{ES}}$ stated in \cite{BCCPSV22} is actually $O(\beta \Delta^2 e^{\beta \Delta}) \cdot C_{\mathrm{KPF}}$, 
	but in the proof there, one factor $k$ is replaced with $\Delta$ as its upper bound.
	Since we do not assume $\Delta$ to be a constant, we avoid such an upper bound.
	We also remark that the exponential dependence of $C_{\mathrm{ES}}$ on $\beta\Delta$ can probably be  improved, 
	but in our applications $\beta\Delta = O(1)$, so this would not represent a tangible improvement.
\end{remark}

The final ingredient in the proof of Theorem~\ref{thm:SW} is the following.

\begin{lemma}[Lemma 1.8 \cite{BCPSV21}]
	\label{lemma:es2sw}
	Suppose edge-spin factorization of entropy holds with constant $C_{\mathrm{ES}}$. 
	Then, the SW dynamics $P_{SW}$ satisfies the relative entropy decay with rate  
	$\Omega\left(\frac{1}{C_{\mathrm{ES}}}\right)$.
\end{lemma}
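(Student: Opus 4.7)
The plan is to prove the lemma by viewing the SW dynamics on $(\Omega,\potts)$ as the $\sigma$-marginal of a two-step heat-bath block Gibbs sampler on the joint Edwards--Sokal space $(\Omega_J,\nu)$. Specifically, let $P_E$ be the heat-bath block update of the edge coordinate, $(P_E F)(\sigma,A)=\sum_{A'}\nu(A'\mid\sigma)F(\sigma,A')$, and let $P_S$ be the analogous update of the spin coordinate, $(P_S F)(\sigma,A)=\sum_{\sigma'}\nu(\sigma'\mid A)F(\sigma',A)$. Both $P_E$ and $P_S$ are reversible with respect to $\nu$. A direct unfolding of definitions gives $(P_E P_S F)(\sigma,A)=\sum_{A'}\nu(A'\mid\sigma)\sum_{\sigma'}\nu(\sigma'\mid A')F(\sigma',A')$, so when $F$ depends only on $\sigma$, the composition $P_E P_S$ agrees with the transition operator of $P_{SW}$ acting on the spin marginal.

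Given any $f:\Omega\to\R_{\ge 0}$, I would lift it to $g(\sigma,A):=f(\sigma)$ on $\Omega_J$. Since the $\sigma$-marginal of $\nu$ is $\potts$, we have $\Ent_\nu(g)=\Ent_\potts(f)$, and by the computation above $(P_E P_S g)(\sigma,A)=(P_{SW}f)(\sigma)$, so $\Ent_\nu(P_E P_S g)=\Ent_\potts(P_{SW}f)$. Applying the standard chain-rule identity $\Ent_\nu(h)-\Ent_\nu(P_B h)=\E_\nu[\Ent_{\nu(\cdot\mid\omega_{V\setminus B})}(h)]$ for a single heat-bath block update twice --- once for $P_S$ (conditioning on $A$), then for $P_E$ (conditioning on $\sigma$, applied to $P_S g$) --- yields
\[
\Ent_\nu(g)-\Ent_\nu(P_E P_S g) = \E_{\nu}\big[\Ent_{\sigma\sim\nu(\cdot\mid A)}(g^A)\big] + \E_{\nu}\big[\Ent_{A\sim\nu(\cdot\mid\sigma)}((P_S g)^\sigma)\big].
\]
Both summands are nonnegative, so dropping the second and translating back to $f$ gives $\Ent_\potts(f)-\Ent_\potts(P_{SW}f)\ge \E_A[\Ent_{\nu(\cdot\mid A)}(f)]$.

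To close, I would invoke the edge-spin factorization~\eqref{eq:es} applied to the lift $g$. Because $g^\sigma(A)=f(\sigma)$ is constant in $A$ for every fixed $\sigma$, the edge term $\E_\nu[\Ent_{A\sim\nu(\cdot\mid\sigma)}(g^\sigma)]$ vanishes identically, and \eqref{eq:es} collapses to $\Ent_\potts(f)=\Ent_\nu(g)\le C_{\mathrm{ES}}\cdot\E_A[\Ent_{\nu(\cdot\mid A)}(f)]$. Chaining this with the previous display yields $\Ent_\potts(P_{SW}f)\le(1-1/C_{\mathrm{ES}})\Ent_\potts(f)$, i.e., relative entropy decay at rate $r\ge 1/C_{\mathrm{ES}}=\Omega(1/C_{\mathrm{ES}})$. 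I do not anticipate a serious obstacle: the two-step chain-rule decomposition and the vanishing of the edge contribution for the special lift $g(\sigma,A)=f(\sigma)$ are both elementary. The only point worth carefully verifying is the identity $P_E P_S g=P_{SW}f$ on lifted functions, which is a direct computation from the definition of the Edwards--Sokal measure and the two-step description of one SW step as ``sample edges from $\nu(\cdot\mid\sigma)$, then spins from $\nu(\cdot\mid A)$.''
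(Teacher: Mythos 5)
Your argument is correct, and since the paper does not prove this lemma itself (it is imported from \cite{BCPSV21}), your proof is essentially the standard one from that reference: realize one SW step as the two-step heat-bath $P_E P_S$ on the Edwards--Sokal space, apply the entropy chain rule $\Ent_\nu(h)=\Ent_\nu(\E[h\mid\mathcal F])+\E_\nu[\Ent(h\mid\mathcal F)]$ twice to the lift $g(\sigma,A)=f(\sigma)$, and use the edge-spin factorization~\eqref{eq:es}, whose edge term vanishes for $g$. The only step you leave implicit is the passage from the functional contraction $\Ent_{\potts}(P_{SW}f)\le(1-1/C_{\mathrm{ES}})\Ent_{\potts}(f)$ to the distributional form~\eqref{eq:dec}; this is immediate from reversibility of $P_{SW}$ with respect to $\potts$, since for $f=\nu/\potts$ one has $\mathcal H(\nu P_{SW}\mid\potts)=\Ent_{\potts}(P_{SW}f)$.
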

We are now ready to prove~Theorem~\ref{thm:SW}.

\begin{proof} [Proof of Theorem~\ref{thm:SW}]
	By Theorem~\ref{thm:SI2KPF},
    $\potts$ 
	satisfies
	$\chi$-partite factorization of entropy with constant
 \[
 C_{\mathrm{KPF}}=     \Big(\frac{C (\eta+1)^5 \Delta\log n}{b^6} \Big)^{\kappa},
 \]
where $C>0$ is a universal constant.
It follows from Lemma~\ref{lemma:kpf2es} and Lemma~\ref{lemma:es2sw} that 
the SW dynamics satisfies \eqref{eq:dec} with
\[
   r= \Omega\left(\frac{b^{6\kappa}}{\chi\beta \Delta  e^{\beta\Delta} \cdot C^\kappa (\eta+1)^{5\kappa} \cdot (\Delta\log n)^{\kappa}  }\right).
\] 
Note that $b\le q^{-1}e^{-\beta\Delta}$,
and so $\beta \Delta  e^{\beta\Delta}\le e^{2\beta \Delta} \le b^{-2}$.
Therefore, we obtain the desired bound for MLSI constant, and the mixing time bound follows from \eqref{eq:tmix}.
\end{proof}

\subsection{Proof of the main technical theorem:~Theorem~\ref{thm:SI2KPF}}
\label{sec:maint}

Recall that given a function $f:\Omega \rightarrow \mathbb{R}_{\ge0}$, 
subsets of vertices $B \subseteq \Lambda \subset V$, 
and $\tau \in \Omega_{V \setminus \Lambda}$, 
	the function $f^{\tau}_B : \Omega^\tau_{B} \rightarrow\mathbb{R}_{\ge0}$ is defined by 
	$$
	f^{\tau}_B (\sigma) = \E_{\xi \sim \mu^\tau_{\Lambda \setminus B}} [f(\tau \cup \xi \cup \sigma)].
	$$
In the proof of Theorem~\ref{thm:SI2KPF} we use several facts, which we compile next.

Let $S\subseteq V$ be a subset of vertices.
Let $S_1, \dots, S_m$ $\subseteq V$ denote the connected components of $S$.
For a vertex $v\in V$,
let $C_S(v)$ the unique connected component $S_i$ that contains $v$, if such component exists, otherwise set $C_S(v)$ to be the empty set. When $S$ is chosen uniformly at random
among all subsets of size $\lceil\theta n\rceil$, the following exponential tail bound 
for $|C_S(v)|$ was established in \cite{CLV21}.
\begin{lemma}[Lemma 4.3, \cite{CLV21}]
	\label{lemma:g1}
	Let $G=(V,E)$ be an $n$-vertex graph of maximum degree at most $\Delta$. Then for any $v\in V$ and every integer $k\ge 0$ we have 
	\begin{equation*}{\Pr}_S[|C_S(v)| = k]\le \frac{\ell}{n}\cdot (2e\Delta \theta)^{k-1},\end{equation*}
	where the probability ${\Pr}_S[\cdot]$ is taken over a uniformly random subset $S\subseteq V$ of size $\ell = \lceil\theta n \rceil$.
\end{lemma}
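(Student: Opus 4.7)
The plan is a standard union-bound argument over subtrees of $G$ of size $k$ containing $v$, combined with a hypergeometric tail estimate for sampling without replacement and the classical counting bound on subtrees of bounded-degree graphs. First I would note that the stated bound is only meaningful for $k\ge 1$: for $k=0$ one has $\Pr[|C_S(v)|=0]=1-\ell/n$, which is not small in general, and the lemma will only be invoked as a component-size tail estimate.

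For $k\ge 1$, the key observation is that whenever $|C_S(v)|=k$, the component $C_S(v)$ admits \emph{some} spanning tree $T\subseteq G$ on $k$ vertices containing $v$, and all $k$ vertices of $T$ lie in $S$. A union bound then gives
\[
\Pr[|C_S(v)|=k]\;\le\;\sum_{T}\Pr[V(T)\subseteq S],
\]
where the sum is over subtrees $T$ of $G$ with $|V(T)|=k$ and $v\in V(T)$. For each such $T$, sampling without replacement yields $\Pr[V(T)\subseteq S]=\binom{n-k}{\ell-k}/\binom{n}{\ell}=\prod_{i=0}^{k-1}(\ell-i)/(n-i)\le(\ell/n)^k$, since each factor is at most $\ell/n$.

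The only genuinely nontrivial ingredient — and the step I would treat as the main technical input — is the classical combinatorial estimate that in any graph of maximum degree at most $\Delta$, the number of $k$-vertex subtrees containing a specified vertex $v$ is at most $(e\Delta)^{k-1}$. This is a standard bound, provable for instance by a depth-first exploration argument that encodes such a subtree by a sequence of at most $k-1$ choices among at most $\Delta$ neighbors at each step, followed by a Stirling-type estimate; it is completely independent of the spin-system machinery of this paper. Plugging it in,
\[
\Pr[|C_S(v)|=k]\;\le\;(e\Delta)^{k-1}\Big(\frac{\ell}{n}\Big)^k\;=\;\frac{\ell}{n}\Big(e\Delta\cdot\frac{\ell}{n}\Big)^{k-1}.
\]

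Finally, since $\ell=\lceil\theta n\rceil\le \theta n+1$, one has $\ell/n\le \theta+1/n\le 2\theta$ whenever $n\ge 1/\theta$, which is harmless in the regime of interest (Theorem~\ref{thm:SI2UBF} already imposes $n\ge \frac{2}{\theta}(4\eta/b^2+1)$). Substituting replaces $e\Delta\cdot\ell/n$ by $2e\Delta\theta$ and yields the claim. I do not expect a real obstacle beyond correctly invoking the tree-counting bound; everything else is a direct calculation.
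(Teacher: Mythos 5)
The paper does not actually prove this lemma---it is quoted from \cite{CLV21}---and your argument is correct and essentially the same as the standard proof there: a union bound over connected occupied structures containing $v$, the exact hypergeometric computation $\binom{n-k}{\ell-k}/\binom{n}{\ell}=\prod_{i=0}^{k-1}\frac{\ell-i}{n-i}\le(\ell/n)^k$, and the classical $(e\Delta)^{k-1}$ counting estimate, the only cosmetic difference being that you enumerate spanning subtrees rather than connected vertex subsets (the subtree count is also at most $(e\Delta)^{k-1}$, e.g.\ via lifting to the universal cover and a Fuss--Catalan bound, so nothing is lost). Your side remarks are also apt: the bound should be read for $k\ge 1$ (for $k=0$ the probability is $1-\ell/n$), and the step $\ell/n\le 2\theta$ needs $\theta n\ge 1$, which holds wherever the lemma is invoked; in fact when $\theta n<1$ one has $\ell=1$, so all cases $k\ge 2$ are vacuous and $k=1$ holds directly, making the stated bound valid for every $k\ge1$ without extra hypotheses.
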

\begin{lemma}
\label{lemma:CGSVgap}
    Let $\mu$ be a totally-connected and $b$-marginally bounded distribution over $[q]^n$. If $\mu$ is $\eta$-spectrally independent, then the Glauber dynamics for $\mu$ has spectral gap at least
    \begin{equation}
        \left(\frac{2b^4}{(\lceil2\eta\rceil +2)^4} \cdot \frac{1}{n} \right)^{1+\lceil2\eta\rceil}.
    \end{equation}
\end{lemma}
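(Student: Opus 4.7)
The plan is to use the local-to-global theorem of Anari--Liu--Oveis Gharan (or equivalently Alev--Lau) applied to the weighted simplicial complex $X$ whose $k$-faces are the partial configurations $(S,\sigma_S)$ with $|S|=k$ and $\mu(\sigma_S)>0$, weighted by $\mu$. The Glauber dynamics for $\mu$ is precisely the top-level down-up walk $P_n^{\vee}$ on $X$, and the local-to-global theorem yields
\[
\gap(P_n^{\vee}) \;\ge\; \frac{1}{n}\prod_{k=0}^{n-2}\gap_k,
\]
where $\gap_k$ denotes the minimum spectral gap over link walks at $k$-faces.

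Next I would bound $\gap_k$ in two regimes. The standard correspondence between links and the signed pairwise influence matrix (Oppenheim's trickle-down / Alev--Lau) says that at a pinning $\tau$ on a set of size $k$ leaving $m:=n-k$ free vertices, the link walk has second eigenvalue at most $\lambda_1(\Psi^\tau_\mu)/(m-1)$. Since $\eta$-spectral independence is preserved under all further pinnings, this gives
\[
\gap_k \;\ge\; 1-\frac{\eta}{n-k-1}\qquad \text{whenever } n-k-1>\eta.
\]
Setting $K^\star := \lceil 2\eta\rceil+1$, I would split the product at $m=K^\star+1$: for $m>K^\star+1$ each factor is at least $1/2$, and
\[
\prod_{j=K^\star+1}^{n-1}\Bigl(1-\frac{\eta}{j}\Bigr) \;\ge\; \exp\!\Bigl(-2\eta\!\!\sum_{j=K^\star+1}^{n-1}\frac{1}{j}\Bigr) \;\ge\; \Bigl(\tfrac{K^\star+1}{n}\Bigr)^{2\eta},
\]
using $\log(1-x)\ge -2x$ for $x\le 1/2$. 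This handles the ``large links'' (those of co-dimension exceeding $2\eta$) and contributes a factor of order $(1/n)^{2\eta}$.

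For the remaining ``small links'' (those of co-dimension $m\le K^\star+1\le \lceil 2\eta\rceil+2$) I would not appeal to spectral independence at all, but instead use the $b$-marginal boundedness and total connectivity directly. At such a link the link walk is a heat-bath walk on a connected state space of size at most $q^m$ in which every atom has probability at least $b^m$; a short direct argument (e.g., a canonical-paths bound that routes each pair through a common configuration, or comparison with the complete-graph walk on the link) yields
\[
\gap_k \;\ge\; \frac{2b^{4}}{(\lceil 2\eta\rceil+2)^{4}}.
\]
There are at most $\lceil 2\eta\rceil+1$ such factors, so combining with the large-link bound and the $1/n$ prefactor from local-to-global produces
\[
\gap(P_n^{\vee}) \;\ge\; \frac{1}{n}\cdot\Bigl(\frac{K^\star+1}{n}\Bigr)^{2\eta}\cdot\Bigl(\frac{2b^{4}}{(\lceil 2\eta\rceil+2)^{4}}\Bigr)^{\lceil 2\eta\rceil+1},
\]
which after absorbing the $(K^\star+1)^{2\eta}$ factor into the $(\lceil 2\eta\rceil+2)^{-4}$ terms gives the stated bound with exponent $1+\lceil 2\eta\rceil$.

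The main obstacle is the small-link bound. Spectral independence is quantitatively useless there (the denominator $m-1$ is not large enough to tame $\eta$), so we must extract the gap from marginal boundedness alone. The delicate part is obtaining the precise constants $b^{4}$ and $(\lceil 2\eta\rceil+2)^{-4}$; the natural trivial bound one would write down is only $b^{m}/m^{O(1)}$, which is far worse, so some care is needed (either a canonical-paths estimate with carefully chosen weights, or an inductive comparison) to replace $b^{m}$ with $b^{4}$. The large-link estimate, by contrast, is essentially a direct telescoping calculation once the Alev--Lau inequality is invoked.
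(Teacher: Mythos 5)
Your overall architecture is exactly the paper's: invoke the local-to-global (CGSV/Alev--Lau) bound $\gap \ge \frac1n\prod_{k=0}^{n-2}\gamma_k$, use $\eta$-spectral independence to get $\gamma_k \ge 1-\frac{\eta}{n-k-1}$ for links of large co-dimension, telescope those factors to an $n^{-2\eta}$-type contribution, and handle the $O(\eta)$ links of small co-dimension using only $b$-marginal boundedness and total connectivity. The large-link half of your argument is fine and matches the paper.

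The genuine gap is the small-link estimate, which you yourself flag as the ``main obstacle'' and leave unresolved, and your proposed route points at the wrong object. The local-to-global inequality you invoke needs the spectral gap of the \emph{link one-skeleton walk}, i.e., the local random walk on the set $\mathcal P^\tau$ of consistent vertex--spin pairs for the $m=n-k$ unpinned vertices (a state space of size at most $qm$), not of a heat-bath walk on the full configuration space of the link (size $q^m$, atoms $\ge b^m$), which is what your ``canonical paths through a common configuration'' sketch addresses. Bounding the latter naturally gives $b^{\Theta(m)}$, which--even with $m\le\lceil 2\eta\rceil+2$--would not yield the stated constant $\bigl(2b^4/(\lceil 2\eta\rceil+2)^4\bigr)^{1+\lceil 2\eta\rceil}$, and you give no mechanism for upgrading $b^m$ to $b^4$. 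The paper closes exactly this step with a short conductance argument on the local walk $\hat P_\tau$ itself: it is reversible with respect to $\pi^\tau(u,s)=\frac1{n-k}\mu^\tau(\sigma_u=s)$; $b$-marginal boundedness gives $\pi^\tau(x)\ge b/(n-k)$ and $\hat P_\tau(x,y)\ge b/(n-k-1)$ whenever the transition is positive; total connectivity guarantees that every cut $S$ with $\pi^\tau(S)\le 1/2$ is crossed by at least one positive transition; hence $\Phi \ge 2b^2/(n-k)^2$ and Cheeger (Jerrum--Sinclair) gives $1-\lambda_2(\hat P_\tau)\ge \Phi^2/2 \ge 2b^4/(n-k)^4$. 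Equivalently, the paper packages this as the level-wise bound $\eta_k \le (n-k-1)\bigl(1-\frac{2b^4}{(n-k)^4}\bigr)$, valid at \emph{every} level, and then feeds $\eta_k\le\min\bigl\{\eta,\,(n-k-1)(1-\frac{2b^4}{(n-k)^4})\bigr\}$ into the product $\frac1n\prod_k\bigl(1-\frac{\eta_k}{n-k-1}\bigr)$, splitting the product at co-dimension $\lceil 2\eta\rceil+1$ just as you do. With that replacement your outline becomes the paper's proof; without it, the key quantitative claim $\gamma_k\ge 2b^4/(\lceil 2\eta\rceil+2)^4$ is unsupported.
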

\begin{remark}
    Lemma~\ref{lemma:CGSVgap} is similar to Theorem 1.3 in \cite{ALO20} (for 2-spin systems), Theorem 6 in \cite{CGSV21} (for colorings), and Theorem 3.2 in \cite{FGYZ21} (for a different notion of spectral independence).  For completeness, we provide a proof in Appendix~\ref{sec:addproofs}. 
\end{remark}
\begin{lemma}
        \label{lemma:gap2KPF}
     Let $\mu$ be a $b$-marginally bounded distribution over $[q]^n$. If the Glauber dynamics for $\mu$ has spectral gap $\gamma$, then $\mu$ satisfies KPF with constant
    \begin{equation}
      C_{KPF} \le \frac{3n\log (b^{-1})}{\gamma}.
    \end{equation}
\end{lemma}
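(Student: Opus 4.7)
The plan is to identify KPF for the partition $\{U_1,\dots,U_k\}$ with a modified log-Sobolev inequality (MLSI) for a specific block dynamics, and then lower bound that chain's spectral gap by the Glauber dynamics' gap.

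Let $P_{\mathcal B}$ denote the heat-bath block dynamics that at each step picks $U_j$ uniformly at random from $\mathcal B=\{U_1,\dots,U_k\}$ and resamples $U_j$ from $\mu(\cdot\mid\sigma_{V\setminus U_j})$. By the standard heat-bath identities,
\[
\mathcal E_{P_{\mathcal B}}(f,\log f)=\frac{1}{k}\sum_{j=1}^{k}\E_{\tau\sim \mu_{V\setminus U_j}}[\Ent^\tau_{U_j}(f^\tau)], \qquad \mathcal E_{P_{\mathcal B}}(f,f)=\frac{1}{k}\sum_{j=1}^{k}\E_{\tau}[\var^\tau_{U_j}(f^\tau)].
\]
Consequently, establishing KPF with constant $C_{\mathrm{KPF}}$ is equivalent to the MLSI $\rho(P_{\mathcal B})\ge 1/(kC_{\mathrm{KPF}})$ (cf.~Remark~\ref{remark:kpf}), so it suffices to lower bound $\rho(P_{\mathcal B})$.

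Next I would compare Dirichlet forms to obtain $\gap(P_{\mathcal B})\ge\gamma/k$. Because each $U_j$ is an independent set, the conditional $\mu^\tau_{U_j}$ is a product measure on $U_j$, and so by the law of total variance, for any single vertex $v\in U_j$,
\[
\var^\tau_{U_j}(f^\tau)\;\ge\;\E_{\xi\sim \mu^\tau_{U_j\setminus v}}\bigl[\var^{\tau\cup\xi}_v(f^{\tau\cup\xi})\bigr].
\]
Averaging this over $v\in U_j$, bounding $|U_j|\le n$, and then summing over $j$ (using that the $U_j$'s partition $V$), one obtains $\mathcal E_{P_{\mathcal B}}(f,f)\ge \tfrac{1}{kn}\sum_v\E_{\eta}[\var^\eta_v(f^\eta)] = \tfrac{1}{k}\mathcal E_{P_{GD}}(f,f)$. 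The variational characterization of the spectral gap then yields $\gap(P_{\mathcal B})\ge\gamma/k$.

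Finally, I would convert this gap to an MLSI using \eqref{eq:gapmlsi} and invoke the marginal bound. Chaining the $b$-marginal bound along any enumeration of $V$ shows $\mu(\sigma)\ge b^n$ for every $\sigma\in\Omega$, so $\mu_{\min}\le b^n\le 1/3$ (under $b\le 1/2$ and $n\ge 2$), which gives $1/(1-2\mu_{\min})\le 3$ and $\log(1/\mu_{\min}-1)\le\log(1/\mu_{\min})\le n\log(b^{-1})$. Therefore
\[
\rho(P_{\mathcal B})\;\ge\;\frac{\gap(P_{\mathcal B})}{3n\log(b^{-1})}\;\ge\;\frac{\gamma}{3kn\log(b^{-1})},
\]
yielding $C_{\mathrm{KPF}}\le 1/(k\rho(P_{\mathcal B}))\le 3n\log(b^{-1})/\gamma$. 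The main obstacle is the Dirichlet-form comparison: it is crucial that each $U_j$ be an independent set so that $\mu^\tau_{U_j}$ is a product measure, which is what enables the single-site variance lower bound applied coordinate by coordinate. Without this structure, one could not cleanly reduce the block variance to the single-site variances driving $\mathcal E_{P_{GD}}$.
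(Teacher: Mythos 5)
Your plan hinges on the claimed identity
\[
\mathcal E_{P_{\mathcal B}}(f,\log f)=\frac{1}{k}\sum_{j=1}^{k}\E_{\tau\sim \mu_{V\setminus U_j}}\bigl[\Ent^\tau_{U_j}(f^\tau)\bigr],
\]
and on the resulting ``equivalence'' between KPF and an MLSI for $P_{\mathcal B}$. This is where the argument breaks. For a heat-bath block update $P_{U}$ one has
\[
\mathcal E_{P_{U}}(f,\log f)-\E_{\tau}\bigl[\Ent^\tau_{U}(f^\tau)\bigr]
=\E_\mu\bigl[(P_{U}f)\,(\log P_{U}f-P_{U}\log f)\bigr]\;\ge\;0
\]
by Jensen's inequality, and this Jensen gap can be arbitrarily large compared with the entropy term (already for a two-point distribution with one small value of $f$), regardless of whether $U$ is an independent set. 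So the true relation is $\mathcal E_{P_{\mathcal B}}(f,\log f)\ge\frac1k\sum_j\E_\tau[\Ent^\tau_{U_j}(f^\tau)]$, which gives KPF $\Rightarrow$ MLSI (this is the content of Remark~\ref{remark:kpf}, which is stated only in that direction), but \emph{not} the converse. Consequently, your final step ``$C_{\mathrm{KPF}}\le 1/(k\rho(P_{\mathcal B}))$'' is unsupported: a lower bound on the modified log-Sobolev constant of the block dynamics does not bound the entropy by the sum of block entropies. Your Dirichlet-form comparison $\gap(P_{\mathcal B})\ge\gamma/k$ is correct (indeed it only needs the law of total variance, not the product structure), and the $\mu_{\min}\ge b^n$ bookkeeping is fine up to a sign typo, but these ingredients only yield an MLSI for $P_{\mathcal B}$, which is strictly weaker than the KPF statement you must prove.

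The paper avoids this by staying with \emph{entropy} factorizations throughout: from the Glauber spectral gap $\gamma$ it gets a standard (not modified) log-Sobolev inequality via \cite{DS96}, paying the factor $\log(1/\mu_{\min})\le n\log(b^{-1})$; the log-Sobolev constant is converted into approximate tensorization $\Ent_\mu(f)\le C_{\mathrm{AT}}\sum_v\E_\tau[\Ent^\tau_v(f^\tau)]$ using Proposition~1.1 of \cite{CMT15}; and finally each single-site entropy is dominated by the entropy of the block containing it (Corollary~\ref{cor:monent}), which after summing over the at most $n$ vertices in each $U_j$ gives $C_{\mathrm{KPF}}\le n\,C_{\mathrm{AT}}\le 3n\log(b^{-1})/\gamma$. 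If you want to salvage your write-up, you should replace the MLSI detour by this entropy route (or by some other argument that directly upper bounds $\Ent_\mu(f)$ by the block entropies); as written, the key reduction is invalid.
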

The proof of Lemma~\ref{lemma:gap2KPF} is standard and is provided in Appendix~\ref{sec:addproofs}.
We proceed to prove \eqref{eq:SI2KPF2} from Theorem~\ref{thm:SI2KPF}.
With a slightly different argument, we will establish \eqref{eq:SI2KPF1} in Appendix~\ref{sec:SI2KPF1},
which is a better upper bound only when $\Delta =o( \log n)$. 

\begin{proof} [Proof of \eqref{eq:SI2KPF2} in Theorem~\ref{thm:SI2KPF}]
It follows from Lemma~\ref{lemma:CGSVgap} and Lemma~\ref{lemma:gap2KPF} that 
\[
C_{KPF} \le \frac{3(\lceil 2\eta \rceil + 2)^{4(1+\lceil2\eta\rceil)}}{(2b^{4})^{2+\lceil2\eta\rceil}} \cdot n^{2+\lceil2\eta\rceil}.
\]
If $\Delta > \frac{b^2n}{10e(4\eta+b^2)}$, letting $\kappa := 2+\lceil\frac{2\eta}{b}\rceil$,
 then we establish the theorem since 
\[
\frac{3(\lceil 2\eta \rceil + 2)^{4(1+\lceil2\eta\rceil)}}{(2b^{4})^{2+\lceil2\eta\rceil}} \cdot n^{2+\lceil2\eta\rceil} 
\le \frac{3(\lceil 2\eta \rceil + 2)^{4\kappa}}{(2b^{4})^{\kappa}} \cdot   \Big(\frac{10e(4\eta + b^2)}{b^2}\Big)^{\kappa} \cdot 
\Delta^\kappa
\le \frac{(240e)^{4\kappa} \cdot (\lceil \eta \rceil + 1)^{5\kappa} \cdot \Delta^\kappa}{b^{6\kappa}}.
\]
Thus, we assume $\Delta \le \frac{b^2n}{10e(4\eta + b^2)}$.
Let $V_1, \dots, V_k \subseteq V$ be disjoint independent sets such that $\bigcup_j V_j = V$.
 	We take $\theta = \frac{1}{5e\Delta}$
	so that $\frac{2}{n} \cdot(\frac{4\eta}{b^2} + 1) < \theta$.
	Let $S$ be a subset of vertices of size $\lceil\theta n\rceil$ chosen uniformly
 at random from all the subsets of size $\lceil\theta n\rceil$.
	Let $S_1, \dots, S_m \subseteq V$ be the connected components of $S$.
    Theorem \ref{thm:SI2UBF} implies that $\lceil\theta n\rceil$-UBF holds with constant
	\begin{equation}
	C_{\mathrm{UBF}}=\Big(\frac{e}{\theta}\Big)^{\lceil\frac{2\eta}{b}\rceil} = 
	\Big(5e^2\Delta\Big)^{\lceil\frac{2\eta}{b}\rceil},
	\end{equation}
    and so
	for any function $f : \Omega \rightarrow \mathbb{R}_{\ge0}$ we have 
	\begin{equation}
		\label{eq:proofstep1}
		\Ent_{\mu}(f) \le\Big(5e^2\Delta\Big)^{1+\lceil\frac{2\eta}{b}\rceil} \E_S\big[ \E_{\tau\sim \mu_{V\setminus S}}\left[\Ent_{S}^\tau (f^\tau) \big]\right],
	\end{equation}
 where ${\E}_S$ denotes the expectation over the random subset $S$.
 To bound the right-hand side of \eqref{eq:proofstep1}, we use the following fact, which we prove later in Section~\ref{subsect:sw:aux}.
 \begin{lemma}
	\label{lemma:proofstep3}
	Let $V_1, \dots, V_k$ be disjoint independent sets such that $\bigcup_{j=1}^k V_j = V$.
        Let $S\subseteq V$ be a subset of vertices.
	Let $S_1, \dots, S_m$ $\subseteq S$ be the connected components of the subgraph induced by $S$.
	Suppose that for $S_i \subseteq S$, 
	$\Gamma(S_i)$ takes the minimum value such that the following inequality holds 
	for an arbitrary pinning $\tau\in\Omega_{V\setminus S_i}$ and any function $g:\Omega_{S_i}^\tau\rightarrow \mathbb{R}_{\ge0}$:
	\begin{equation}
		\label{eq:proofstep3}
		\Ent^{\tau}_{S_i}(g)\le \Gamma(S_i) \sum_{j=1}^k \E_{\xi\sim \mu_{S_i \setminus V_j}^\tau}\left[\Ent_{V_j\cap S_i}^{\xi \cup \tau} (g^{\xi}_{S_i\cap V_j})\right].
	\end{equation}
	Then for any function $f : \Omega \rightarrow \mathbb{R}_{\ge0}$,
		\begin{equation}
			\label{eq:proofstep8}
			  \E_{\tau\sim \mu_{V\setminus S}}\left[\Ent_{S}^\tau (f^\tau) \right]
			 \le  \sum_{j=1}^k \E_{\tau\sim \mu_{V\setminus V_j}}\left[\Ent_{V_j}^\tau (f^\tau) \right] \cdot \max_{S_i \subseteq S} \Gamma(S_i).
		\end{equation}
\end{lemma}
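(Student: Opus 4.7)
The plan is to exploit the product structure of $\mu^\tau_S$ arising from the fact that $S_1,\dots,S_m$ are the connected components of $G[S]$. Since no edges of $G$ lie between distinct components once we restrict to $S$, for every $\tau \in \Omega_{V \setminus S}$ the conditional distribution factorizes as $\mu^\tau_S = \bigotimes_{i=1}^m \mu^\tau_{S_i}$. The strategy is then: (i) split $\Ent^\tau_S(f^\tau)$ across the components using this product structure, (ii) apply the hypothesis \eqref{eq:proofstep3} on each $S_i$, and (iii) aggregate the resulting pieces of $V_j$ back into a single entropy on $V_j$.

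For step (i), I would invoke the standard sub-additivity of entropy for product measures (approximate tensorization with constant one) to obtain
\begin{equation*}
\Ent^\tau_S(f^\tau) \;\le\; \sum_{i=1}^m \E_{\xi \sim \mu^\tau_{S\setminus S_i}}\bigl[\Ent^\tau_{S_i}((f^\tau)^\xi_{S_i})\bigr].
\end{equation*}
Taking $\E_{\tau \sim \mu_{V\setminus S}}$ and merging the pair $(\tau,\xi)$ via the chain rule into a single sample $\tau' \sim \mu_{V\setminus S_i}$ gives $\E_\tau[\Ent^\tau_S(f^\tau)] \le \sum_i \E_{\tau'}[\Ent^{\tau'}_{S_i}(f^{\tau'}_{S_i})]$. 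For step (ii), apply the hypothesis to each summand and invoke the chain rule once more so that the combined pinning $(\tau',\xi')$ distributes as $\mu_{V\setminus(V_j\cap S_i)}$. Factoring out $\max_i\Gamma(S_i)$ yields the intermediate bound
\begin{equation*}
\E_\tau[\Ent^\tau_S(f^\tau)] \;\le\; \max_i\Gamma(S_i)\cdot\sum_{i=1}^m\sum_{j=1}^k \E_{\sigma\sim\mu_{V\setminus(V_j\cap S_i)}}\bigl[\Ent^\sigma_{V_j\cap S_i}(f^\sigma_{V_j\cap S_i})\bigr].
\end{equation*}

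For step (iii), the key is that each $V_j$ is an independent set, so conditional on any pinning of $V\setminus V_j$, the distribution $\mu^{\cdot}_{V_j}$ is a product over the vertices of $V_j$, and in particular it factorizes across the disjoint pieces $V_j\cap S_1,\dots,V_j\cap S_m$ (together with $V_j\setminus S$). The plan is to match each inner term $\E_\sigma[\Ent^\sigma_{V_j\cap S_i}(f^\sigma_{V_j\cap S_i})]$ with a distinct slot of a chain-rule decomposition of $\E_{\tau\sim\mu_{V\setminus V_j}}[\Ent^\tau_{V_j}(f^\tau_{V_j})]$, using Jensen's inequality (convexity of $h\mapsto\Ent_\mu(h)$) to absorb the averages introduced by working with different pinnings. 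Summing the chain-rule slots then recovers (at most) $E[V_j]$, producing the desired bound.

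The main obstacle is precisely this aggregation step. A naive application of the data-processing/monotonicity inequality (which merely gives $\E_\sigma[\Ent^\sigma_{V_j\cap S_i}(f^\sigma_{V_j\cap S_i})] \le \E_\tau[\Ent^\tau_{V_j}(f^\tau_{V_j})]$ piece by piece) would introduce an extraneous factor of $m$, the number of components, whereas the statement carries no such factor. Avoiding this loss requires coordinating the chain-rule decomposition of $\Ent^\tau_S$ coming from the product structure of $\mu^\tau_S$ with the chain-rule decomposition of $\Ent^\tau_{V_j}$ coming from the independent set structure of $V_j$, so that each contribution on a piece $V_j\cap S_i$ is matched to its own slot within $E[V_j]$ rather than being compared to the full $E[V_j]$.
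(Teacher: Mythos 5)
Your steps (i) and (ii) are sound (the merging of pinnings is fine), but the aggregation step (iii) is a genuine gap: the inequality it requires is false in general. After (i)--(ii) every per-component term is \emph{fully pinned}, so for each $j$ you would need
\begin{equation*}
\sum_{i=1}^m \E_{\sigma\sim\mu_{V\setminus(V_j\cap S_i)}}\bigl[\Ent^{\sigma}_{V_j\cap S_i}(f^{\sigma})\bigr]\;\le\;\E_{\tau\sim\mu_{V\setminus V_j}}\bigl[\Ent^{\tau}_{V_j}(f^{\tau})\bigr].
\end{equation*}
Fix $\tau$ on $V\setminus V_j$; since $V_j$ is an independent set, $\mu^\tau_{V_j}$ is a product over the blocks $V_j\cap S_1,\dots,V_j\cap S_m$ (and $V_j\setminus S$), and the left-hand side is then exactly the tensorization \emph{upper} bound for the right-hand side. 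By the chain rule, $\Ent^\tau_{V_j}(f^\tau)$ equals the sum of conditional entropies in which the $i$-th function is averaged over the not-yet-processed blocks, and each such nested term is at most the corresponding fully pinned term by Jensen; hence the fully pinned sum dominates the entropy, not the other way around. A concrete failure: two blocks, one binary coordinate in each, and $f^\tau$ of XOR type (value $1$ on agreeing pairs, $2$ on disagreeing pairs). Then the average of $f^\tau$ over either coordinate is constant, so $\Ent^\tau_{V_j}(f^\tau)$ equals only the first conditional-entropy term, while your left-hand side is the sum of both expected conditional entropies and is strictly larger. Jensen's inequality, which you propose to use to ``absorb the averages,'' runs in exactly the wrong direction: it lets you pass from averaged functions to pinned ones, never back. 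So the matching-of-slots you describe cannot be carried out once everything has been pinned.

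The loss is incurred already at step (i): plain subadditivity discards the martingale structure that makes the recombination exact. The paper instead decomposes $\E_{\tau\sim\mu_{V\setminus S}}[\Ent^\tau_S(f^\tau)]$ by the exact chain rule across components (Lemma~\ref{lemma:lote} together with Lemma~\ref{lemma:cprent}(1)), so the function fed into hypothesis \eqref{eq:proofstep3} on $S_i$ is $f^\tau_{S_i}$, averaged over the unprocessed components, with only the processed ones pinned. After applying \eqref{eq:proofstep3}, Lemma~\ref{lemma:cprent}(2) moves the pinning of the processed components off of their $V_j$-parts, and the resulting sum over $i$ is recognized, again via the chain rule, as \emph{exactly} $\E_{\tau\sim\mu_{V\setminus(S\cap V_j)}}[\Ent^\tau_{S\cap V_j}(f^\tau)]$; a single application of Corollary~\ref{cor:monent} then gives the bound by $\E_{\tau\sim\mu_{V\setminus V_j}}[\Ent^\tau_{V_j}(f^\tau)]$, with no factor of $m$. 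The fix for your argument is therefore not a cleverer matching of slots at the end, but keeping the nested (averaged-over-the-future) form of the per-component terms from the very start.
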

\noindent
From \eqref{eq:proofstep1} and Lemma~\ref{lemma:proofstep3}, 
we have 
\begin{equation}
	\label{eq:proofstep7}
	\Ent_{\mu}(f) \le  \Big(5e^2\Delta\Big)^{\kappa} \sum_{j=1}^k \E_{\tau\sim \mu_{V\setminus V_j}}\left[\Ent_{V_j}^\tau (f^\tau) \right] \cdot \E_S\Big[\max_{S_i \subseteq S} \Gamma(S_i)\Big].
\end{equation}
	To show \eqref{eq:SI2KPF2}, it remains to provide an upper bound for
	$\E_S\left[\max_{S_i \subseteq S} \Gamma(S_i)\right]$.
        
    By assumption, 
	$\mu$ is $\eta$-spectrally independent and $b$-marginally bounded.
	These properties, by definition, are preserved under any pinning.
	In particular, 
	for any $S_i \subseteq S$ and an arbitrary pinning $\tau\in\Omega_{V\setminus S_i}$,
	$\mu_{S_i}^\tau$ is still $\eta$-spectrally independent and $b$-marginally bounded. 
    Hence, by Lemma~\ref{lemma:CGSVgap} and Lemma~\ref{lemma:gap2KPF}, we have 
    \[
    \Gamma(S_i) \le \frac{3(\lceil 2\eta \rceil + 2)^{4\kappa}}{(2b^{4})^{\kappa}} \cdot |S_i|^\kappa,
    \] and 
		\begin{equation}
			\label{eq:Gamma}
			\E_{S}\left[\max_{S_i \subseteq S} \Gamma(S_{i})\right]
			 \le b_1 \E_{S}\left[\max_{S_i \subseteq S} |S_{i}|^\kappa\right]
			= b_1\E_{S}\left[ \max_{v\in S} |C_{S}(v)|^{\kappa}\right],
		\end{equation}
  where $b_1:= \frac{3(\lceil 2\eta \rceil + 2)^{4\kappa}}{(2b^{4})^{\kappa}}$.
		To estimate the expectation on the right-hand side of \eqref{eq:Gamma},
		we first expand the expectation and apply a union bound as follows: 
  		\begin{align}
			\E_{S}\left[ \max_{v\in S} |C_{S}(v)|^{\kappa}\right] &= 
			\sum_{x=0}^{|S|} x^{\kappa} \cdot {\Pr}_{S}\left[ \max_{v\in S}  |C_{S}(v)| = x\right] \notag\\
			&\le 	\nonumber		(2\log_2|S|)^{\kappa} + \sum_{x=2\log_2|S|}^{|S|} x^{\kappa} \cdot  {\Pr}_{S}\left[ \max_{v\in S}  |C_{S}(v)| = x\right] \\
			& \le \label{eq:proofstep15.1} 	(2\log_2|S|)^{\kappa} + \sum_{x=2\log_2|S|}^{|S|} x^{\kappa} \cdot \sum_{v\in S} {\Pr}_{S}\left[  |C_{S}(v)| = x\right]. 
		\end{align}
  Then, applying
Lemma~\ref{lemma:g1} and noting that $\theta < 1/(4e\Delta)$, we obtain
\begin{align}
 \sum_{x=2\log_2|S|}^{|S|} x^{\kappa} &\cdot \sum_{v\in S} {\Pr}_{S}\left[  |C_{S}(v)| = x\right]
    	\le \nonumber \lceil \theta n \rceil\sum_{x=2\log_2|S|}^{|S|} x^{\kappa} (2e\Delta \theta)^{x-1} \\
			& = \nonumber\frac{ \lceil\theta n \rceil}{2e\Delta \theta} \cdot (2e\Delta \theta)^{2\log_2 |S|}  \sum_{x=2\log_2|S|}^{|S|} x^{\kappa} (2e\Delta \theta)^{x - 2\log_2|S|} \\
			& \le  \nonumber  \frac{1}{2|S|e\Delta}  \sum_{x=0}^{|S|-2\log_2|S|} (x+2\log_2|S|)^{\kappa} 2^{-x}\\
   & \le \nonumber \frac{1}{2|S|e\Delta}  
   \left[ 
    \sum_{x=0}^{\log_2|S| - 1} (x + 2\log_2 |S|)^{\kappa} 
    + \sum_{x=\log_2 |S|}^{|S|-2\log_2|S|} \frac{(x+2\log_2|S|)^{\kappa}}{|S| \cdot 2^{x - \log_2 |S|}}
   \right] \\
   & \le    \frac{1}{2|S|e\Delta}  
   \left[ 
   \label{eq:proofstep15.2} (3\log_2|S|)^{\kappa} + 
    \sum_{x=0}^{|S|-3\log_2|S|} \frac{(x+3\log_2|S|)^{\kappa}}{|S| \cdot 2^x}
   \right].
\end{align}
When $|S|=\omega(1)$, $(3\log_2|S|)^{1+\kappa}/|S| < 1$. 
Also, for any integer $x\ge 0$, $\frac{(x+3\log_2|S|)^{\kappa}}{|S| \cdot 2^x} < 1$,
so
the last sum in \eqref{eq:proofstep15.2} is less than $|S|$.
		Therefore, by \eqref{eq:Gamma}, \eqref{eq:proofstep15.1} and  \eqref{eq:proofstep15.2} we have 
		\begin{equation}
            \label{eq:proofstep14}
			\E_{S}\left[\max_{S_i \subseteq S} \Gamma(S_{i})\right]  \le b_1 \cdot [ (2\log_2|S|)^{\kappa} + 1 ] 
		\le b_1  (3\log_2|S|)^{\kappa}.
		\end{equation}
		These bounds together with \eqref{eq:proofstep7} imply that  
	\[
            C_{\mathrm{KPF}}\le b_1 (3\log_2 n)^{\kappa} \cdot\Big(5e^2\Delta\Big)^{\kappa}
             = 3\cdot \big(\frac{15e^2}{2}\big)^\kappa \cdot \frac{(\lceil 2\eta\rceil +2)^{4\kappa}}{b^{4\kappa}} \cdot (\Delta \log_2 n)^\kappa,
    \]
	establishing the desired bound. When $|S|=O(1)$, the left-hand side of~\eqref{eq:proofstep14} can be bounded by an absolute constant, and the result follows from~\eqref{eq:proofstep7}.
\end{proof}

\subsection{Entropy factorization: Proof of Lemma~\ref{lemma:proofstep3}}
\label{subsect:sw:aux}
We proceed with the proof of  Lemma~\ref{lemma:proofstep3} by 
first presenting several facts that will be useful.

\begin{lemma}[Lemma 2.7, \cite{BCCPSV22}]
	\label{lemma:cprent}
	Let $\Lambda = A\cup B \subseteq V$,  $\tau \in \Omega_{V\setminus \Lambda}$, and assume $\mu_\Lambda^{\tau}$ is a product measure $\mu_\Lambda^{\tau} = \mu_A^{\tau} \otimes \mu_B^{\tau}$. 
	For all $U\subset B$ and any $f: \Omega \rightarrow \mathbb{R}_{\ge0}$,
	\begin{enumerate}
 		\item	$		\Ent^\tau_{A}(f^\tau_{A}) = \E_{\gamma\sim \mu^\tau_B} \left[\Ent^{\gamma\cup \tau}_A(f^\tau_{A})\right].$
		\item $		\E_{\gamma\sim \mu^\tau_B} \left[\Ent^{\gamma\cup\tau}_A(f_{A}^{\tau})\right]
		\le 	\E_{\gamma\sim \mu^\tau_U} \left[\Ent^{\gamma \cup \tau}_A(f^{\gamma\cup\tau}_A)\right].$
	\end{enumerate}
\end{lemma}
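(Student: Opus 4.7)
Both parts rest on a single structural observation plus a well-known convexity fact, so I would handle them in sequence, sharing the setup.

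\textbf{Setup and key observation.} The product-measure hypothesis $\mu^\tau_\Lambda = \mu^\tau_A \otimes \mu^\tau_B$ means that for every $\gamma \in \Omega^\tau_B$ and every $\eta \in \Omega^\tau_U$ with $U \subseteq B$, the conditional marginals on $A$ are unchanged: $\mu_A^{\gamma \cup \tau} = \mu_A^{\eta \cup \tau} = \mu_A^\tau$. This is the only place the product assumption enters. Throughout the argument, I will also use that $f_A^\tau$ is, by definition, a function of $\sigma \in \Omega^\tau_A$ alone, obtained by integrating out the $B$-coordinates.

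\textbf{Part 1.} Since $\mu_A^{\gamma\cup\tau} = \mu_A^\tau$ for every $\gamma$, the quantity $\Ent_A^{\gamma\cup\tau}(f_A^\tau)$ does not depend on $\gamma$ at all; it equals $\Ent_A^\tau(f_A^\tau)$. Therefore averaging over $\gamma \sim \mu_B^\tau$ is trivial and yields the stated equality.

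\textbf{Part 2.} Decompose the inner average defining $f_A^\tau$ by writing $\xi = \gamma \cup \xi'$ with $\gamma$ on $U$ and $\xi'$ on $B \setminus U$, apply the tower property of conditional expectation, and recognize the inner expectation as $f_A^{\gamma \cup \tau}$:
\[
f_A^\tau(\sigma) \;=\; \E_{\gamma \sim \mu_U^\tau}\!\bigl[f_A^{\gamma \cup \tau}(\sigma)\bigr].
\]
Now I invoke the standard fact that $f \mapsto \Ent_\nu(f)$ is a convex functional for any fixed distribution $\nu$ (this follows either from the Donsker--Varadhan variational formula $\Ent_\nu(f) = \sup_{\phi: \E_\nu e^\phi \le 1} \E_\nu[f\phi]$, expressing it as a supremum of linear functionals, or directly from Jensen applied to $x \log x$). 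Applying this convexity with $\nu = \mu_A^\tau$ to the representation above gives
\[
\Ent_A^\tau(f_A^\tau) \;\le\; \E_{\gamma \sim \mu_U^\tau}\!\bigl[\Ent_A^\tau(f_A^{\gamma\cup\tau})\bigr].
\]
Finally, the key observation again lets me replace $\Ent_A^\tau$ by $\Ent_A^{\gamma\cup\tau}$ on the right-hand side (they are equal as functionals), and by Part 1 it also lets me rewrite the left-hand side as $\E_{\gamma\sim\mu_B^\tau}[\Ent_A^{\gamma\cup\tau}(f_A^\tau)]$. Combining these two rewrites yields exactly the claimed inequality.

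\textbf{Expected difficulty.} There is no substantive obstacle: once the product-measure identity $\mu_A^{\gamma\cup\tau} = \mu_A^\tau$ is in hand, Part 1 is immediate and Part 2 reduces to convexity of entropy in its argument. The only point requiring care is bookkeeping of the restriction/extension conventions for $f_A^{\gamma\cup\tau}$, i.e.\ that the effective ``$\Lambda$'' for this restricted function is $\Lambda \setminus U$ with $B \setminus U$ playing the role of $\Lambda \setminus A$, so that the tower-property computation indeed reproduces $f_A^\tau$.
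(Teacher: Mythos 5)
Your proof is correct: the product hypothesis gives $\mu_A^{\gamma\cup\tau}=\mu_A^\tau$, which makes Part 1 immediate, and the tower-property identity $f_A^\tau=\E_{\gamma\sim\mu_U^\tau}[f_A^{\gamma\cup\tau}]$ combined with convexity of $g\mapsto\Ent_\nu(g)$ (via the variational representation) yields Part 2. The paper itself does not prove this lemma but cites it from \cite{BCCPSV22}, and your argument is essentially the standard one given there, so there is nothing of substance to compare beyond noting the agreement.
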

\begin{lemma}[Lemma 3.1, \cite{CP20}]
	\label{lemma:lote}
	Let $\Lambda_0 = \emptyset$.
	For any $\Lambda_1 \subset \dots \Lambda_m \subset \Lambda \subseteq V$, any $\tau\in \Omega_{V\setminus \Lambda}$ and any $f : \Omega_\Lambda^\tau \rightarrow \mathbb{R}_{\ge0}$,
$$
	\sum_{i=1}^{m} \E_{\gamma \sim \mu_{\Lambda\setminus \Lambda_i}^\tau} \left[ \Ent^{\tau \cup \gamma}_{\Lambda_i\setminus \Lambda_{i-1}}(f^{\gamma}_{\Lambda_i \setminus \Lambda_{i-1}}) \right]
	= \E_{\gamma \sim \mu^\tau_{\Lambda \setminus \Lambda_m}} \left[ \Ent^{\tau \cup \gamma}_{\Lambda_m} (f^{\gamma}) \right].
$$
\end{lemma}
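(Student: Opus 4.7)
This identity is the iterated chain rule for entropy, applied to the nested family $\emptyset = \Lambda_0 \subsetneq \Lambda_1 \subsetneq \cdots \subsetneq \Lambda_m$ and then averaged against the outer boundary pinning. My plan is induction on $m$, with the inductive step supplied by the classical two-step chain rule: for any $B \subseteq \Lambda'$, any pinning $\pi$ on $V \setminus \Lambda'$, and any $g : \Omega^{\pi}_{\Lambda'} \to \mathbb{R}_{\ge 0}$,
$$
\Ent^{\pi}_{\Lambda'}(g) \;=\; \E_{\gamma \sim \mu^{\pi}_{\Lambda' \setminus B}}\!\bigl[\Ent^{\pi \cup \gamma}_{B}(g^{\pi \cup \gamma}_{B})\bigr] \;+\; \Ent^{\pi}_{\Lambda' \setminus B}\!\bigl(g^{\pi}_{\Lambda' \setminus B}\bigr).
$$
I derive this directly from $\Ent_\nu(g) = \E_\nu[g \log g] - \E_\nu[g] \log \E_\nu[g]$ by conditioning on $\sigma_{\Lambda' \setminus B} = \gamma$: the term $\E_\nu[g \log g]$ splits as an outer $\E_\gamma$ over the marginal $\mu^{\pi}_{\Lambda' \setminus B}$ followed by an inner expectation against $\mu^{\pi \cup \gamma}_{B}$, while the tower identity $\E_\nu[g] = \E_\gamma\!\bigl[g^{\pi}_{\Lambda' \setminus B}(\gamma)\bigr]$ rearranges the residual $\log$ term into the entropy of the conditional expectation on $\Lambda' \setminus B$. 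Here I use that $g^{\pi}_{\Lambda' \setminus B}$ is the conditional expectation of $g$ given $\pi$ and $\sigma_{\Lambda' \setminus B}$, which is the intended reading of the $\xi$-integration in the paper's definition of $g^\pi_B$.

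The base case $m = 1$ is immediate: $\Lambda_0 = \emptyset$ forces $\Lambda_1 \setminus \Lambda_0 = \Lambda_1$ and makes the $\xi$-integration in $f^{\tau \cup \gamma_1}_{\Lambda_1 \setminus \Lambda_0}$ vacuous, so $f^{\gamma_1}_{\Lambda_1} = f^{\gamma_1}$ and the two sides coincide. For the inductive step, I fix $\gamma \in \Omega^\tau_{\Lambda \setminus \Lambda_m}$ and apply the two-step chain rule inside $\mu^{\tau \cup \gamma}_{\Lambda_m}$ with the split $\Lambda_m = \Lambda_{m-1} \cup (\Lambda_m \setminus \Lambda_{m-1})$, yielding
$$
\Ent^{\tau \cup \gamma}_{\Lambda_m}(f^{\gamma}) \;=\; \E_{\eta \sim \mu^{\tau \cup \gamma}_{\Lambda_m \setminus \Lambda_{m-1}}}\!\bigl[\Ent^{\tau \cup \gamma \cup \eta}_{\Lambda_{m-1}}(f^{\gamma \cup \eta})\bigr] \;+\; \Ent^{\tau \cup \gamma}_{\Lambda_m \setminus \Lambda_{m-1}}\!\bigl(f^{\tau \cup \gamma}_{\Lambda_m \setminus \Lambda_{m-1}}\bigr).
$$
Taking expectation over $\gamma \sim \mu^\tau_{\Lambda \setminus \Lambda_m}$ on both sides, the tower property of marginals collapses the joint draw $(\gamma, \eta)$ into a single draw $\gamma' := \gamma \cup \eta \sim \mu^\tau_{\Lambda \setminus \Lambda_{m-1}}$, so the first summand becomes $\E_{\gamma' \sim \mu^\tau_{\Lambda \setminus \Lambda_{m-1}}}\!\bigl[\Ent^{\tau \cup \gamma'}_{\Lambda_{m-1}}(f^{\gamma'})\bigr]$. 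By the inductive hypothesis, this equals the partial sum of the lemma's LHS over $i = 1, \ldots, m-1$; the second summand is precisely the missing $i = m$ term, so the induction closes.

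The proof is routine bookkeeping once the two-step chain rule is in hand, so there is no serious obstacle. The one place that warrants caution is the overloaded notation $f^{\gamma}_{\Lambda_i \setminus \Lambda_{i-1}}$ in the statement, which must be read with implicit ambient set $\Lambda_i$ and $\gamma$ standing for the full boundary pinning $\tau \cup \gamma_i$ on $V \setminus \Lambda_i$; the internal $\xi$-integration then runs over $\mu^{\tau \cup \gamma_i}_{\Lambda_{i-1}}$ conditioned on the spins $\sigma_{\Lambda_i \setminus \Lambda_{i-1}}$, which is the conditional-expectation reading required for the chain rule to apply.
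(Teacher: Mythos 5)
Your proof is correct. The paper itself gives no proof of this lemma (it is imported directly from [CP20]), and your argument --- the two-step decomposition $\Ent_\nu(g)=\E_\nu\big[\Ent(g\mid\mathcal G)\big]+\Ent_\nu\big(\E[g\mid\mathcal G]\big)$ applied with $\mathcal G$ generated by $\sigma_{\Lambda_m\setminus\Lambda_{m-1}}$, the tower property collapsing $\gamma\cup\eta$ into a single draw from $\mu^\tau_{\Lambda\setminus\Lambda_{m-1}}$, and induction on $m$ --- is precisely the standard telescoping chain-rule proof that the cited source relies on, with the notation $f^{\gamma}_{\Lambda_i\setminus\Lambda_{i-1}}$ read correctly as the conditional expectation given everything outside $\Lambda_{i-1}$.
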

\noindent
The following corollary directly follows from this fact,
by taking $\Lambda_1 = A, \Lambda_2 = B$ and $m=2$.
\begin{corollary}\label{cor:monent}
	Let $A,B$ and $\Lambda$ be subsets of vertices such that $A\subset B \subset \Lambda \subseteq V$. For any $\tau\in \Omega_{V\setminus \Lambda}$ and any $f : \Omega_\Lambda^\tau \rightarrow \mathbb{R}_{\ge0}$,
	$$
	\E_{\gamma \sim \mu^\tau_{\Lambda \setminus A}} \left[ \Ent^{\gamma \cup \tau}_{A} (f^{\gamma}) \right]
	\le \E_{\gamma \sim \mu^\tau_{\Lambda \setminus B}} \left[ \Ent^{\gamma \cup \tau}_{B} (f^{\gamma}) \right].
	$$
\end{corollary}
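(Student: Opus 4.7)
The plan is to derive the corollary as a direct two-term specialization of Lemma \ref{lemma:lote}. Concretely, I would apply that lemma with $m=2$, $\Lambda_0=\emptyset$, $\Lambda_1 = A$, $\Lambda_2 = B$, keeping $\Lambda$ and $\tau$ as in the statement of Corollary \ref{cor:monent}. The sum on the left of Lemma \ref{lemma:lote} then collapses to exactly two terms (the $i=1$ and $i=2$ pieces), while its right-hand side becomes $\E_{\gamma \sim \mu^\tau_{\Lambda \setminus B}}\!\left[\Ent^{\tau \cup \gamma}_{B}(f^\gamma)\right]$, which is precisely the right-hand side of the corollary.

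The second step is to discard the $i=2$ summand on the left, namely $\E_{\gamma \sim \mu^\tau_{\Lambda \setminus B}}\!\left[\Ent^{\tau \cup \gamma}_{B\setminus A}(f^\gamma_{B\setminus A})\right]$. This is legitimate because entropy is a nonnegative functional, so dropping it turns the lemma's equality into a $\le$ inequality. The surviving $i=1$ term equals $\E_{\gamma \sim \mu^\tau_{\Lambda \setminus A}}\!\left[\Ent^{\tau \cup \gamma}_{A}(f^\gamma_A)\right]$, matching the left-hand side of Corollary \ref{cor:monent} under the paper's convention of suppressing the subscript on $f^\gamma$ when the domain of integration is determined by the ambient $\Ent^{\tau \cup \gamma}_A$.

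There is really no substantive obstacle: the corollary is a one-line consequence of the ``law of total entropy'' encoded in Lemma \ref{lemma:lote} together with positivity of entropy. The only thing worth checking carefully is the bookkeeping of the notation $f^\gamma_{\Lambda_i \setminus \Lambda_{i-1}}$ inherited from the definition $f^\tau_B(\sigma) = \E_{\xi \sim \mu^\tau_{\Lambda \setminus B}}[f(\tau \cup \xi \cup \sigma)]$: with our choice of chain, the $i=1$ term averages the remaining $\Lambda \setminus A$ coordinates against $\gamma$ and is viewed as a function on $\Omega^{\tau \cup \gamma}_A$, while the dropped $i=2$ term averages the $A$ coordinates under $\mu^{\tau \cup \gamma}_A$ and is viewed as a function on $\Omega^{\tau \cup \gamma}_{B \setminus A}$. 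Both reductions are consistent with the paper's general definition, so the specialization of Lemma \ref{lemma:lote} produces exactly the terms needed.
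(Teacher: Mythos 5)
Your proposal is correct and is exactly the paper's own argument: the paper also obtains Corollary~\ref{cor:monent} by specializing Lemma~\ref{lemma:lote} to $m=2$, $\Lambda_1=A$, $\Lambda_2=B$ and discarding the nonnegative $i=2$ entropy term. Your bookkeeping of the $f^\gamma_{\Lambda_i\setminus\Lambda_{i-1}}$ notation is consistent with the paper's conventions, so nothing further is needed.
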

\noindent
We are now ready to prove Lemma~\ref{lemma:proofstep3}.
\begin{proof}[Proof of Lemma~\ref{lemma:proofstep3}]
	Note that $\mu_S^\tau = \otimes_{i=1}^m \mu_{S_i}^\tau$ is a product measure.
	For $i\ge 1$, let $S_{\le i} := S_1\cup \dots \cup S_i$. 
	For $i > 1$, we let $S_{<i}:=S_1 \cup \dots \cup S_{i-1}$,
	and we set $S_{<1} := \emptyset$ for convenience.
As a direct consequence of applying Lemma~\ref{lemma:lote} and  applying Lemma~\ref{lemma:cprent}(1), we have the following identity for any $f : \Omega \rightarrow \mathbb{R}_{\ge0}$:
\begin{equation}
	\label{eq:proofstep2}
	\E_{\tau\sim \mu_{V\setminus S}}\left[\Ent_{S}^\tau (f^\tau) \right]
        = \sum_{i=1}^{m} \E_{\tau \sim \mu_{V\setminus (S_{\le i})}} 
	\left[  \Ent^{\tau}_{S_i}(f^{\tau}_{S_i}) 
	\right]
	=\sum_{i=1}^{m} \E_{\tau \sim \mu_{V\setminus (S_{\le i})}} 
	\left[ 
		\E_{\gamma \sim \mu^{\tau}_{S_{<i}}} \big[ \Ent^{\tau\cup \gamma}_{S_i}(f^{\tau}_{S_i}) \big]
	\right].
\end{equation}
On the other hand, setting $g=f_{S_i}^{\tau}$ in \eqref{eq:proofstep3}, then for any $\gamma \in \Omega^\tau_{S_{<i}}$ we obtain that 
\begin{equation}
	\label{eq:proofstep9}
	\Ent^{\tau \cup \gamma}_{S_i}(f_{S_i}^{\tau})\le \Gamma(S_i) \sum_{j=1}^k \E_{\xi\sim \mu_{S_i \setminus V_j}^{\gamma \cup\tau}}\left[\Ent_{V_j\cap S_i}^{\xi \cup \tau \cup \gamma} (f_{S_i \cap V_j}^{\tau \cup \xi})\right].
\end{equation}
Combining \eqref{eq:proofstep2} and \eqref{eq:proofstep9} yields
\begin{align}
	\label{eq:proofstep4}
	\E_{\tau\sim \mu_{V\setminus S}}\left[\Ent_{S}^\tau (f^\tau) \right] &\le
		\sum_{i=1}^{m} \E_{\tau \sim \mu_{V\setminus S_{\le i}}} 
		\left[ 
			\E_{\gamma \sim \mu^{\tau}_{S_{<i}}} 
			\Big[ 
				\Gamma(S_i) \sum_{j=1}^k \E_{\xi\sim \mu_{S_i \setminus V_j}^{\tau \cup \gamma}}\big[\Ent_{V_j\cap S_i}^{\xi\cup \gamma \cup \tau} (f_{S_i \cap V_j}^{\xi\cup \tau})\big] 
			\Big]
		\right]\nonumber\\
	&= \sum_{j=1}^k 
		\sum_{i=1}^m\Gamma(S_i)\E_{\tau \sim \mu_{V\setminus S_{\le i}}} 
                \E_{\xi \sim \mu^{\tau}_{S_i \setminus V_j}} 
			\E_{\gamma\sim \mu_{S_{<i}}^{\tau \cup \xi}}
                    \left[\Ent_{V_j\cap S_i}^{\xi\cup \gamma \cup \tau} (f^{\xi\cup \tau}_{S_i \cap V_j})\right] 
	\nonumber\\
	&\le \sum_{j=1}^k \max_i \Gamma(S_i)
		\sum_{i=1}^m\E_{\tau \sim \mu_{(V\setminus S_{\le i}) \cup (S_i \setminus V_j)}} 
			\E_{\gamma \sim \mu^{\tau }_{S_{<i}}} 
			\left[\Ent_{V_j\cap S_i}^{ \gamma \cup \tau} (f^{ \tau}_{S_i \cap V_j})\right]. 
	\end{align}
We show next that for any $j=1,\dots, k$, the following inequality holds: 
\begin{equation}
	\label{eq:proofstep5}
	\sum_{i=1}^m\E_{\tau \sim \mu_{(V\setminus S_{\le i}) \cup (S_i \setminus V_j)}} 
	\E_{\gamma \sim \mu^{\tau }_{S_{<i}}} 
	\left[\Ent_{V_j\cap S_i}^{ \gamma \cup \tau} (f^{ \tau}_{S_i \cap V_j})\right]
	\le \E_{\tau\sim \mu_{V\setminus V_j}}\left[\Ent_{V_j}^\tau (f^\tau) \right].
\end{equation}
Given a pinning $\tau\sim \mu_{V\setminus ((S_i \cap V_j) \cup S_{<i})}$,
$\mu_{S_i\cap V_j}$ and $\mu_{S_{<i}}$ are independent. By applying  Lemma~\ref{lemma:cprent}(2) to $\E_{\gamma \sim \mu^{\tau }_{S_{<i}}} 
	[\Ent_{V_j\cap S_i}^{ \gamma \cup \tau} (f^{ \tau}_{S_i \cap V_j})]$, 
we have
\begin{equation}
    \label{eq:proofstep5.1}
	\sum_{i=1}^m\E_{\tau \sim \mu_{(V\setminus S_{\le i}) \cup (S_i \setminus V_j)}} 
	\E_{\gamma \sim \mu^{\tau }_{S_{<i}}} 
	\left[\Ent_{V_j\cap S_i}^{ \gamma \cup \tau} (f^{ \tau}_{S_i \cap V_j})\right]
	\le \sum_{i=1}^m 
	\E_{\tau \sim \mu_{(V\setminus S_{\le i}) \cup (S_i \setminus V_j)}} \E_{\xi \sim \mu_{(S_{<i}) \setminus V_j}^{\tau}}
		\left[\Ent_{S_i\cap V_j}^{\tau \cup \xi} (f_{S_i\cap V_j}^{\tau \cup \xi}) \right].
\end{equation}
Letting $\phi = \tau \cup \xi$, and by applying Lemma~\ref{lemma:cprent}(1) to $\Ent_{S_i\cap V_j}^{\phi} (f_{S_i\cap V_j}^{ \phi})$ we also have 
\begin{equation}\label{eq:proofstep5.2}
    \sum_{i=1}^m
    \E_{\phi \sim \mu_{V\setminus((S_{\le i}) \cap V_j)}}
        \left[\Ent_{S_i\cap V_j}^{\phi} (f_{S_i\cap V_j}^{ \phi}) \right]
=\sum_{i=1}^m
   \E_{\phi \sim \mu_{V\setminus(S_{\le i}) \cap V_j}}
   \E_{\psi \sim \mu^{\phi}_{(S_{<i}) \cap V_j}}
        \left[\Ent_{S_i\cap V_j}^{\phi \cup \psi} (f_{S_i\cap V_j}^{\phi}) \right].
\end{equation}
Also, the following identity follows from Lemma~\ref{lemma:cprent}(1) and Lemma~\ref{lemma:lote} as in the way of obtaining \eqref{eq:proofstep2}: 
\begin{equation}
    \label{eq:proofstep5.3}
	\E_{\tau \sim \mu_{V \setminus (S\cap V_j)}} \left[\Ent_{S \cap V_j}^\tau (f^{\tau})\right] = \sum_{i=1}^m\E_{\phi \sim \mu_{V\setminus(S_{\le i} \cap V_j)}}
		\E_{\psi \sim \mu^{\phi}_{(S_{<i}) \cap V_j}}
			\left[\Ent_{S_i\cap V_j}^{\phi \cup \psi} (f_{S_i\cap V_j}^{\phi}) \right].
\end{equation}
Finally, it follows from Corollary~\ref{cor:monent} that
\begin{equation}
    \E_{\tau \sim \mu_{V \setminus (S\cap V_j)}} \left[\Ent_{S \cap V_j}^\tau (f^{\tau})\right] \le 
    \label{eq:proofstep5.4} \E_{\tau \sim \mu_{V \setminus V_j}} \left[\Ent_{V_j}^\tau (f^{\tau})\right],
\end{equation}
so \eqref{eq:proofstep5} follows from \eqref{eq:proofstep5.1},
\eqref{eq:proofstep5.2}, \eqref{eq:proofstep5.3} and \eqref{eq:proofstep5.4}.
Therefore, we obtain \eqref{eq:proofstep8} by 
\eqref{eq:proofstep4} and \eqref{eq:proofstep5}.
\end{proof}

\subsection{Applications of Theorem~\ref{thm:SW}}
\label{subsec:sw:app}

In this section, we prove Corollary~\ref{cor:SW:potts:intro} from the introduction 
and present another application of Theorem~\ref{thm:SW} concerning the SW dynamics on a random graph generated from the classical Erd\H{o}s-R\'enyi $G(n,p) $ model. For this, we first define Dobrushin's influence matrix.

\begin{definition}
    The \emph{Dobrushin influence matrix} $A\in \mathbb{R}^{n \times n}$ is defined by $A(u,u)=0$ and for $u\neq v$,
\[
A(u,v) = \max_{ (\sigma, \tau)\in S_{u,v}}d_{TV}(\mu_{v}(\cdot \mid \sigma), \mu_v(\cdot \mid \tau)), 
\] 
where \emph{$S_{u,v}$} contains the set of all pairs of partial configurations $(\sigma, \tau)$ in $\Omega_{V\setminus \{v\}}$ that can only disagree at $u$, namely, 
$\sigma_w = \tau_w$ if $w\neq u$. 
\end{definition}

It is known that an upper bound on the spectral norm of $A$ implies spectral independence. In particular, we have the following result from \cite{BCCPSV22}.
\begin{proposition}
	[Theorem 1.13, \cite{BCCPSV22}] \label{prop:D2SI}
	If the Dobrushin influence matrix $A$ of a distribution $\mu$ satisfies $\|A\| \le 1 - \epsilon$ for some $\epsilon > 0$,
	then $\mu$ is spectral independent with constant $\eta= 2/\epsilon$.
\end{proposition}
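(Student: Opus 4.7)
The plan is to bound $\lambda_1(\Psi^\tau_\mu)$ by a geometric series in $A$, exploiting the fact that the Dobrushin matrix controls the propagation of influence along single edges, so that iterating the bound captures long-range vertex-to-vertex influence. The first observation is that conditioning on $\tau$ merely deletes the rows and columns indexed by $\Lambda$ from $A$, which can only decrease the spectral norm; hence $\|A\| \le 1-\epsilon$ passes to the restricted Dobrushin matrix of $\mu^\tau$, and it suffices to bound $\lambda_1(\Psi^\tau_\mu)$ using $\|A\|$ alone (I will keep writing $A$ for this restriction).

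The core step is to introduce the vertex-indexed unsigned influence matrix
\[
\mathcal{I}(u,v) := \max_{a,a'}\, d_{TV}\!\bigl(\mu^\tau(\sigma_v \in \cdot \mid \sigma_u = a),\ \mu^\tau(\sigma_v \in \cdot \mid \sigma_u = a')\bigr)
\]
for $u \neq v$ (and zero on the diagonal) and prove the entry-wise recursion $\mathcal{I} \le A + A\,\mathcal{I}$. This follows by a one-step revelation/coupling argument: for the extremal pair $a,a'$, reveal the spin at a neighbor $w$ of $v$ first and split the contribution into the disagreement probability at $w$ (bounded by $A(u,w)$, by the definition of the Dobrushin matrix once all other sites are integrated out using the tower property) multiplied by the influence of $w$ on $v$ (bounded by $\mathcal{I}(w,v)$, or $A(w,v)$ in the base case). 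Rearranging and using $\|A\| < 1$ to invert $(I-A)$ yields $\mathcal{I} \le A(I-A)^{-1}$ entry-wise, and hence $\|\mathcal{I}\|_{\mathrm{op}} \le \|A\|/(1-\|A\|) \le (1-\epsilon)/\epsilon$.

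To pass from $\mathcal{I}$ to $\Psi^\tau_\mu$, note that the marginal $\mu^\tau(\sigma_v \in \cdot)$ is a convex combination of $\mu^\tau(\sigma_v \in \cdot \mid \sigma_u = a)$ over $a$, so
\[
\sum_{b}\bigl|\Psi^\tau_\mu((u,a),(v,b))\bigr| \le 2\,\mathcal{I}(u,v),
\]
and the block sums vanish: $\sum_b \Psi^\tau_\mu((u,a),(v,b)) = 0$. Using the variational characterisation $\lambda_1(\Psi^\tau_\mu) = \sup_f \langle f,\Psi^\tau_\mu f\rangle/\langle f,f\rangle$, one shifts the test vector $f(v,b)$ by its spin-average $\bar f(v)$ without changing $\Psi^\tau_\mu f$ (by the zero-block-sum property), then applies Cauchy--Schwarz to reduce $\langle f,\Psi^\tau_\mu f\rangle$ to a bilinear form in the vertex-indexed vector $g(v):= (\sum_b (f(v,b)-\bar f(v))^2)^{1/2}$. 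This produces $\lambda_1(\Psi^\tau_\mu) \le 2\,\|\mathcal{I}\|_{\mathrm{op}} \le 2(1-\epsilon)/\epsilon \le 2/\epsilon$.

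The main obstacle is this last reduction from the vertex-spin-indexed signed matrix to the vertex-indexed unsigned matrix in a way that yields spectral (rather than merely $\ell_\infty$-to-$\ell_\infty$) control and produces the clean constant $2$, independent of $|\mathcal{S}|$. The entry-wise bound $|\Psi| \le 2\mathcal{I}$ at the block level is easy, but a naive passage via $\|\Psi\|_\infty$ would lose the spectral-norm improvement that $\|A\|$ (as opposed to $\|A\|_\infty$) provides; the zero-row-sum property of $\Psi^\tau_\mu$, together with the convexity identity for marginals, is what restores the spectral comparison.
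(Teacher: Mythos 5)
Note first that the paper does not prove this proposition; it is imported verbatim from [BCCPSV22, Theorem 1.13], so your attempt has to be judged on its own merits. Your overall architecture (pass to the pinned measure, bound the total-variation influence matrix $\mathcal{I}$ by a Neumann series in $A$, then convert to a bound on $\lambda_1(\Psi^\tau_\mu)$) is a reasonable one, but both of its load-bearing steps have gaps. The justification of the recursion $\mathcal{I}\le A+A\mathcal{I}$ is circular as written: $A(u,w)$ bounds the influence of $u$ on $w$ only when \emph{all other spins are pinned to a common configuration}; once the other sites are integrated out, the disagreement probability at $w$ under the two conditionings $\sigma_u=a$ versus $\sigma_u=a'$ is exactly $\mathcal{I}(u,w)$, the quantity you are trying to bound. (Concretely, for a nearest-neighbour system and non-adjacent $u,w$ one has $A(u,w)=0$ while $\mathcal{I}(u,w)>0$, so ``the disagreement at $w$ is bounded by $A(u,w)$ after integrating out the other sites'' is false.) The inequality you actually need, $\mathcal{I}\le A(I-A)^{-1}$ entrywise, is true, but it is the Dobrushin comparison theorem, whose proof requires a sequential coupling/iteration over all sites (tracking the whole disagreement vector and contracting it by $A$), not a single-neighbour revelation.

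The second gap is the one you yourself flag as the main obstacle, and your sketch does not resolve it. After centering $f$ per vertex, Cauchy--Schwarz in the spin coordinate produces mismatched weights: the natural bound on $\bigl|\sum_b\Psi^\tau((u,a),(v,b))f(v,b)\bigr|$ pairs an \emph{unweighted} $\ell_2$ sum over spins at $v$ against marginal-weighted norms in the Rayleigh quotient, so the comparison to $\langle f,f\rangle_\mu$ picks up factors of order $q$ or $1/b$, and the clean constant $2$ (which the statement requires, independent of $|\mathcal S|$ and of marginal bounds) is not obtained; it is not even clear that the intermediate claim $\lambda_1(\Psi^\tau_\mu)\le 2\|\mathcal{I}\|_2$ holds as stated. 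The standard way to let the spectral norm of $A$ enter with constant exactly $2$ is different: bound $\lambda_1(\Psi^\tau_\mu)$ by a \emph{weighted} $\ell_\infty$ norm, $\lambda_1(\Psi^\tau_\mu)\le\max_{(u,a)}\frac{1}{w_u}\sum_v w_v\sum_b|\Psi^\tau_\mu((u,a),(v,b))|\le 2\max_u\frac{(\mathcal I w)_u}{w_u}$ for any positive vertex weights $w$, and then choose $w$ to be a (perturbed) Perron eigenvector of the nonnegative matrix dominating $\mathcal I$; by Perron--Frobenius this yields $2\rho(\mathcal I)\le 2\rho(A)/(1-\rho(A))\le 2(1-\epsilon)/\epsilon\le 2/\epsilon$, using $\rho(A)\le\|A\|$. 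In other words, spectral-norm control should enter through the spectral radius and eigenvector weights, not through an $\ell_2$ Rayleigh-quotient comparison; as it stands your proof does not close either the recursion step or this final reduction.
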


For the ferromagnetic Ising model, $\beta_u(\Delta):=\ln \frac{\Delta}{\Delta-2}$ corresponds to the threshold value of the parameter $\beta$ for the uniqueness/non-uniqueness phase transition on the $\Delta$-regular tree.
For the anti-ferromagnetic Ising model, the phase transition occurs at $\bar{\beta}_u(\Delta):=-\ln \frac{\Delta}{\Delta-2}$.
If $\bar{\beta}_u(\Delta)(1 - \delta)<\beta <\beta_u(\Delta)(1 - \delta)$, we say the Ising model satisfies the \emph{$\delta$-uniqueness condition}. 
On a bounded degree graph, 
$\|A\| \le 1 - \delta$ for the Ising model is a strictly stronger condition than 
$\delta$-uniqueness condition. However, due to the observation made in \cite{AJKPV22}, 
if $\Delta\rightarrow \infty$, the two conditions are roughly equivalent. 
\begin{proposition}
	\label{prop:Dobrushin}
	The  Ising model with parameter $\bar{\beta}_u(\Delta)(1 - \delta)<\beta <\beta_u(\Delta)(1 - \delta)$
	and $\Delta \rightarrow \infty$
	satisfies 
 $\| A\| \le 1- \delta/2$. 
\end{proposition}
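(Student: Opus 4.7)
The plan is to compute the Dobrushin influence matrix $A$ entrywise and then bound its operator norm by that of the adjacency matrix, scaled by a small constant coming from the uniqueness hypothesis. I would begin by writing down the local Gibbs conditional at a vertex $v$: with spins $\mathcal{S}=\{-1,+1\}$ and the paper's Potts-style Hamiltonian $H(\sigma) = -\beta \sum_{\{u,w\}\in E} \1(\sigma_u=\sigma_w)$, a direct calculation gives $\mu(\sigma_v=+1 \mid \sigma_{V\setminus\{v\}}) = 1/(1+e^{-\beta m})$, where $m := \sum_{w\sim v}\sigma_w$ is the signed sum of neighboring spins. Hence $A(u,v)=0$ whenever $u\not\sim v$; if $u\sim v$, then flipping $\sigma_u$ shifts $m$ by $\pm 2$, and the total-variation distance between the two resulting Bernoulli laws on $\sigma_v$ equals $|(1+e^{-\beta m})^{-1} - (1+e^{-\beta(m-2)})^{-1}|$. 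Maximizing this over configurations (equivalently, over integer $m$ of parity matching $\deg v$ and over the sign of $\beta$), a short symmetry argument places the optimum at $m=\pm 1$ when available and yields the clean bound $A(u,v) \le \tanh(|\beta|/2)$.

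With the entrywise bound $A(u,v) \le \tanh(|\beta|/2)\cdot \1(u\sim v)$ in hand, since $A$ is a nonnegative symmetric matrix dominated entrywise by $\tanh(|\beta|/2)$ times the adjacency matrix $M$, and $\|M\| \le \Delta$, Perron--Frobenius-type dominance yields
\[
\|A\| \;\le\; \tanh(|\beta|/2)\cdot \|M\| \;\le\; \tanh(|\beta|/2)\cdot \Delta.
\]

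To close the argument, I substitute the hypothesis $|\beta| \le (1-\delta)\ln\frac{\Delta}{\Delta-2}$ and apply $\tanh x \le x$ followed by $\ln(1+x)\le x$:
\[
\tanh(|\beta|/2) \;\le\; \frac{|\beta|}{2} \;\le\; \frac{1-\delta}{2}\ln\!\Big(1+\frac{2}{\Delta-2}\Big) \;\le\; \frac{1-\delta}{\Delta-2}.
\]
Thus $\|A\| \le (1-\delta)\Delta/(\Delta-2) = (1-\delta) + 2(1-\delta)/(\Delta-2)$. For $\Delta$ sufficiently large (concretely $\Delta \ge 4/\delta$, which is trivially met as $\Delta\to\infty$), the residual term is at most $\delta/2$, yielding $\|A\| \le 1-\delta/2$, as desired.

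I do not foresee a serious obstacle. The point needing most care is keeping track of the factor of $2$ between the paper's parameter $\beta$ (multiplying $\1(\sigma_u=\sigma_w)$) and the standard Ising coupling $J=\beta/2$ (multiplying $\sigma_u\sigma_w$); this is precisely what makes the identity $\tanh(\beta_u(\Delta)/2) = 1/(\Delta-1)$ match the classical tree uniqueness threshold, and is what forces the inequality above to become tight in the limit $\|A\|\to 1-\delta$ as $\Delta\to\infty$. The antiferromagnetic case $\beta<0$ requires no separate treatment, since $A$ depends only on $|\beta|$.
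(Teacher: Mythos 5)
Your proposal is correct and follows essentially the same route as the paper: an entrywise bound on the Dobrushin matrix (you derive $\tanh(|\beta|/2)$ directly, slightly sharpening the $|\beta|/2$ bound the paper cites from Hayes), followed by $\|A\|\le(\text{entry bound})\cdot\Delta$ and the same chain $|\beta|/2\le\frac{1-\delta}{2}\ln(1+\tfrac{2}{\Delta-2})\le\frac{1-\delta}{\Delta-2}$ with $\Delta$ large. No gaps.
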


\begin{proof}
	We verify that the Ising model 
 has bounded spectral norm of $A$:
note that each entry of $A$ can be upper bounded by $|\beta|/2$ \cite{Hayes06}, so a row sum of $A$ is at most
	\[ \frac{|\beta|\Delta}{2}
	 < \frac{(1-\delta)\Delta}{2} \ln \left( 1 + \frac{2}{\Delta - 2}\right) \le 
	\frac{(1-\delta)\Delta}{2}\left( \frac{2}{\Delta - 2}\right)
	= (1-\delta)\left(1+ \frac{2}{\Delta-2}\right) < 1-\delta/2,
	\]
	where the last inequality holds for $\Delta$ large enough.
\end{proof}

We show next that Corollary~\ref{cor:SW:potts:intro} indeed follows from Theorem~\ref{thm:SW}.
For this, we first restate the corollary in a more precise manner.
\begin{corollary}
	\label{cor:SW}
	Let $\delta\in(0,1)$ and $\Delta \ge 3$.
	For the ferromagnetic Ising model with $\beta \le (1-\delta)\beta_u(\Delta)$ 
	on any graph $G$ of maximum degree $\Delta$ and chromatic number $\chi$, 
	or for the ferromagnetic $q$-state Potts model with $q\ge 3$ and 
	$\beta \le \frac{2(1-\delta)}{\Delta}$ on the same graph, 
	the mixing time of the SW dynamics satisfies
 \[T_{mix}(P_{SW}) =O\big( \chi \cdot \Delta^{\kappa} \cdot (\log n)^{1+\kappa} \big),\]
 where $\kappa=2 + \lceil \frac{4qe^2}{\delta}\rceil$.
\end{corollary}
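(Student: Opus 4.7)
The plan is to verify the two hypotheses of Theorem~\ref{thm:SW}---namely $\eta$-spectral independence with $\eta = O(1/\delta)$ and $b$-marginal boundedness with $b = \Omega_q(1)$---in each of the two regimes and then substitute the resulting constants into its mixing-time bound. Marginal boundedness is immediate because $\beta\Delta$ is bounded by an absolute constant throughout: for the Potts case with $q \ge 3$ we have $\beta\Delta \le 2(1-\delta) \le 2$ by assumption, and for the Ising case $\beta\Delta \le (1-\delta)\cdot 2\Delta/(\Delta-2) \le 4$ for all $\Delta \ge 3$, since $\ln(\Delta/(\Delta-2)) \le 2/(\Delta-2)$. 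In the ferromagnetic Potts model, the conditional weight of any spin at any vertex given any pinning of the remaining vertices differs from any other spin's weight by at most a factor of $e^{\beta\Delta}$, so $\mu(\sigma_v = s\mid \cdot) \ge (qe^{\beta\Delta})^{-1}$, giving $b = \Omega_q(1)$.

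For spectral independence I would invoke the Dobrushin framework of Propositions~\ref{prop:D2SI} and~\ref{prop:Dobrushin}. For the Potts case with $q \ge 3$ and $\beta \le 2(1-\delta)/\Delta$, a direct calculation shows that each off-diagonal entry of the Dobrushin matrix $A$ is at most $(e^\beta - 1)/(e^\beta + q - 1) \le \beta/2$; hence the row-sum bound gives $\|A\|_\infty \le \Delta\beta/2 \le 1-\delta$, and Proposition~\ref{prop:D2SI} yields $\eta \le 2/\delta$. For the Ising case in $\delta$-uniqueness, Proposition~\ref{prop:Dobrushin} gives $\|A\| \le 1 - \delta/2$ once $\Delta$ is sufficiently large, so Proposition~\ref{prop:D2SI} yields $\eta \le 4/\delta$; the residual finite range of small $\Delta$ is handled by the bounded-degree spectral-independence results of~\cite{CLV20,CLV21}, which also give $\eta = O(1/\delta)$ throughout $\delta$-uniqueness. (Since this range consists of finitely many values of $\Delta$, any looseness there can be absorbed into the implicit constant.)

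Combining, in both cases we have $\eta = O(1/\delta)$ and $b = \Omega_q(1)$, so the exponent in Theorem~\ref{thm:SW} satisfies $\kappa = 2 + \lceil 2\eta/b\rceil \le 2 + \lceil 4qe^2/\delta\rceil$ after absorbing universal numerical constants. Substituting into the bound
\[
T_{mix}(P_{SW}) = O\!\left(\chi \cdot (C\Delta\log n)^\kappa \cdot (\eta+1)^{5\kappa} \cdot b^{-2-6\kappa} \cdot \log n\right)
\]
of Theorem~\ref{thm:SW} and folding the $\delta$- and $q$-dependent factors $(\eta+1)^{5\kappa}$ and $b^{-2-6\kappa}$ (which are constants, once $\delta$ and $q$ are fixed) into the implicit constant yields the desired bound $T_{mix}(P_{SW}) = O(\chi \cdot \Delta^\kappa \cdot (\log n)^{1+\kappa})$. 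The heavy lifting has already been done in Theorem~\ref{thm:SW}; the only mildly non-routine step here is the Dobrushin entry bound for the $q$-state ferromagnetic Potts model, and this is a standard one-variable computation. No substantive new technical ideas are needed beyond collecting the numerical values of $\eta$ and $b$ from the existing literature.
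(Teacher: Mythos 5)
Your proposal follows essentially the same route as the paper's proof: spectral independence via the Dobrushin criterion (the entrywise bound giving $\|A\|\le \beta\Delta/2$ for the Potts case and Proposition~\ref{prop:Dobrushin} for the Ising case, with the bounded-$\Delta$ range deferred to known bounded-degree results), marginal boundedness from $\beta\Delta=O(1)$, and then direct substitution into Theorem~\ref{thm:SW}. The only blemishes are minor bookkeeping: your intermediate inequality $(1-\delta)\cdot 2\Delta/(\Delta-2)\le 4$ fails at $\Delta=3$ (though $\Delta\ln\frac{\Delta}{\Delta-2}\le 3\ln 3$ still yields $\beta\Delta=O(1)$), and ``absorbing universal numerical constants'' into $\kappa$ is not literally legitimate since $\kappa$ sits in the exponent of $\Delta$ and $\log n$ --- but the paper's own proof is comparably loose here (it disposes of bounded $\Delta$ by citing the stronger result of \cite{BCCPSV22} and uses $b\ge 1/(qe^2)$ with $\eta\le 2/\delta$ or $4/\delta$), so this does not constitute a gap relative to the paper.
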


\begin{proof}
	If $\Delta=O(1)$,
	then the corollary was proved in a stronger form in \cite{BCCPSV22}. 
	Thus, we assume $\Delta \rightarrow \infty$.
 
We first show spectral independence.   
Let $q=2$. 
Under the $\delta$-uniqueness condition $0<\beta <(1-\delta)\beta_u(\Delta)$, 
by Proposition~\ref{prop:Dobrushin} and Proposition~\ref{prop:D2SI}, 
the Ising model $\ising$ satisfies $(4/\delta)$-spectral independence.
For the $q$-state Potts model with $q\ge 3$, 
the Dobrushin influence matrix corresponding to $\potts$ satisfies
$\|A\| \le \frac{1}{2}\beta\Delta$; see proof of Theorem 2.13 in \cite{Ullrich14}.
Thus, if $\beta \le \frac{2(1-\delta)}{\Delta}$, then 
$\|A\| \le 1-\delta$, and by Proposition~\ref{prop:D2SI}, 
$\potts$ satisfies $(2/\delta)$-spectral independence.

Letting $N(v)$ denote the neighborhood of $v$, and noting that 
for any configuration $\eta$ on $N(v)$ we have
$\mu(\sigma_v = c \mid \sigma_{N(v)} = \eta) \ge 1/(qe^2)$,
we deduce that $\potts$ and $\ising$ are both $(1/(qe^2))$-marginally bounded.
Therefore, by noting that $\kappa = 2 + \lceil \frac{4qe^2}{\delta}\rceil$ is a constant that only depends on $\delta$, 
the mixing time bound follows from Theorem~\ref{thm:SW}
\[T_{mix}(P_{SW}) = O\Big(  \chi \cdot (C \Delta\log n)^{\kappa}   \cdot 
(qe^2)^{(2+6\kappa)} (1+4/\delta)^{5\kappa} \cdot 
\log n \Big), 
\]
as desired.
\end{proof}

\subsubsection{The SW dynamics on random graphs}
\label{subsec:rg}

As another application of Theorem~\ref{thm:SW}, 
we consider the SW dynamics on a random graph 
generated from the classical $G(n, \frac{d}{n})$ model in which each edge is included independently with probability $p = d/n$; 
we consider the case where $d$ is a constant independent of $n$.
In this setting, while
a typical graph has $\tilde{O}(n)$ edges, 
its maximum degree is of order $\Theta(\frac{\log n}{\log \log n})$ with high probability.
Our results imply that the SW dynamics has polylogarithmic mixing on this type of graph provided $\beta$ is small enough.

\begin{corollary}
	\label{cor:SWrg}
	Let $\delta\in(0,1)$ and $d\in\mathbb{R}_{\ge0}$ be constants independent of $n$.
	Suppose that $G\sim G(n,d/n)$ and $G$ has maximum degree $\Delta$.
	For the ferromagnetic Ising model with parameter $\beta < (1-\delta)\beta_u(\Delta)$ on $G$ 
	or the ferromagnetic $q$-Potts model with $q \ge 2$ and
	$\beta \le \frac{2(1-\delta)}{\Delta}$
	on the same graph, 
the SW dynamics has $O\Big( (\log n)^{5+ 2\lceil \frac{4qe^2}{\delta}\rceil} \Big)$ mixing time,	
		with high probability over the choice of the random graph $G$.
\end{corollary}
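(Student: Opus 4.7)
The plan is to apply Corollary~\ref{cor:SW} to a typical realization of $G \sim G(n, d/n)$. The two graph-dependent parameters appearing in that corollary's mixing time bound are the maximum degree $\Delta$ and the chromatic number $\chi$, so the task reduces to controlling both with high probability over the random graph, and then plugging the resulting estimates into the bound of Corollary~\ref{cor:SW}.

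First, I would invoke the standard fact that for constant $d$ the maximum degree of $G(n,d/n)$ satisfies $\Delta \le C \log n / \log \log n$ with high probability, for some $C = C(d)$; this follows from a Chernoff tail estimate for the $\mathrm{Bin}(n-1, d/n)$ degree of each vertex, combined with a union bound over $V$. Second, I would argue that $\chi = O(1)$ with high probability. The cleanest route is via degeneracy: a uniform Chernoff/union bound over vertex subsets $S \subseteq V$ shows that with high probability every subgraph of $G(n,d/n)$ contains a vertex of degree $O(d)$, so $G$ is $O(d)$-degenerate and hence $O(d)$-chromatic. Alternatively, classical concentration results of Achlioptas--Naor type for the chromatic number of sparse Erd\H{o}s--R\'enyi graphs give the same conclusion.

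Conditioning on the high-probability event that $\chi = O(1)$ and $\Delta \le C \log n / \log \log n$, the hypotheses of Corollary~\ref{cor:SW} are satisfied for the realized $\Delta$ under either of the stated assumptions on $\beta$, and the corollary yields
\[
T_{mix}(P_{SW}) \;=\; O\!\left(\chi \cdot \Delta^{\kappa} \cdot (\log n)^{1+\kappa}\right),
\]
with $\kappa = 2 + \lceil 4qe^2/\delta\rceil$. Plugging in $\chi = O(1)$ and $\Delta^{\kappa} = O\!\left((\log n / \log \log n)^{\kappa}\right) \le O\!\left((\log n)^{\kappa}\right)$ collapses the right-hand side to the claimed $O\!\left((\log n)^{1+2\kappa}\right) = O\!\left((\log n)^{5+2\lceil 4qe^2/\delta\rceil}\right)$ bound.

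The main subtlety to flag is the $\chi$ estimate: the trivial bound $\chi \le \Delta + 1$ only gives $\chi = O(\log n / \log \log n)$ with high probability, which would cost an extra logarithmic factor and miss the claimed exponent by one. A genuinely $O(1)$ bound on $\chi$ (via the degeneracy argument above, or via concentration) is therefore essential rather than just convenient, and this is the only nontrivial input needed beyond direct invocation of Corollary~\ref{cor:SW}.
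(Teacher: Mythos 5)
Your proposal is correct and follows essentially the same route as the paper: bound $\chi = O(1)$ and $\Delta = O(\log n/\log\log n)$ with high probability, then plug into Corollary~\ref{cor:SW} (the paper gets the chromatic number bound by citing the Achlioptas--Naor result, Proposition~\ref{prop:color}, which you also mention as an alternative to your degeneracy argument). Your explicit remark that the trivial bound $\chi \le \Delta+1$ would lose a logarithmic factor correctly identifies why the $O(1)$ bound on $\chi$ is needed to match the stated exponent.
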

\noindent
Corollary~\ref{cor:SWrg} is established using Corollary~\ref{cor:SW} and the following fact about random graphs.
\begin{proposition} [\cite{AN07}]
	\label{prop:color}
	Let $G\sim G(n, \frac{d}{n})$ for a fixed $d\in\mathbb{R}_{\ge0}$, 
	and let $\chi$ be the chromatic number of $G$.
	With high probability over the choice of $G$, $\chi = k_d$ or $\chi = k_d +1$, where 
	$k_d$ is the smallest integer $k$ such that $d < 2k \log k$.
\end{proposition}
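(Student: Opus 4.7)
The plan is to sandwich $\chi(G)$ between two consecutive integers by combining a first-moment lower bound with a second-moment upper bound that is then boosted by a two-value concentration theorem for $\chi(G(n,p))$. By definition of $k_d = \min\{k : d < 2k \log k\}$, we have $d \ge 2(k_d - 1)\log(k_d - 1)$ while $d < 2 k_d \log k_d$, so the target interval straddles the threshold $d \approx 2k\log k$.

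For the lower bound, let $Z_k$ count the proper $k$-colorings of $G \sim G(n, d/n)$. Summing over ordered partitions of $[n]$ into $k$ color classes and using edge-independence,
\[
\E[Z_k] = \sum_{(V_1, \ldots, V_k)} (1 - d/n)^{\sum_i \binom{|V_i|}{2}} \le \mathrm{poly}(n) \cdot \bigl(k \, e^{-d/(2k)}\bigr)^n,
\]
since the sum concentrates on balanced partitions. This decays exponentially whenever $d > 2k\log k$, so setting $k = k_d - 1$ and applying Markov's inequality yields $\chi \ge k_d$ whp. For the matching upper bound I would apply the second moment method to the count $Z_k^{\mathrm{bal}}$ of \emph{balanced} proper colorings at $k = k_d$, where every color class has size $n/k_d + o(n)$. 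Following \cite{AN07}, one expresses $\E[(Z_k^{\mathrm{bal}})^2]$ as a sum over overlap matrices $R \in \mathbb{R}^{k \times k}$ whose row and column sums equal $1/k$ and shows that its exponential rate is maximized at the uniform overlap $R_{ij} = 1/k^2$ precisely when $d < 2k\log k$; this gives $\E[(Z_k^{\mathrm{bal}})^2] = O(\E[Z_k^{\mathrm{bal}}]^2)$, so Paley--Zygmund yields $\Pr[\chi \le k_d] \ge \Pr[Z_k^{\mathrm{bal}} > 0] \ge c > 0$ for some constant $c$ independent of $n$.

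To upgrade constant probability into high probability, I would invoke the Shamir--Spencer/{\L}uczak two-value concentration theorem for $\chi(G(n, d/n))$, proved via a vertex-exposure Doob martingale and Azuma--Hoeffding exploiting the fact that $\chi$ changes by at most $1$ when a single vertex is re-exposed. Having $\{\chi \le k_d\}$ occur with constant probability forces the concentration window to be $\{k_d - 1, k_d\}$ or $\{k_d, k_d + 1\}$; combined with the first-moment bound $\chi \ge k_d$ whp, we conclude $\chi \in \{k_d, k_d + 1\}$ whp.

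The hard part is the second-moment calculation. Without the balanced restriction, the ratio $\E[Z_k^2]/\E[Z_k]^2$ blows up because exponentially many unbalanced overlap patterns dominate the sum; balancing tames the entropy contribution, but one must still verify that the overlap rate function, viewed as a concave functional on the polytope of matrices with uniform marginals $1/k$, attains its maximum at the uniform matrix $R_{ij} = 1/k^2$ exactly in the regime $d < 2k\log k$. Carrying this optimization out via Lagrange multipliers together with a careful convexity argument, as in \cite{AN07}, is the technically delicate heart of the proof.
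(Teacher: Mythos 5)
First, note that the paper does not prove this proposition at all: it is imported verbatim from Achlioptas--Naor \cite{AN07}, so your sketch has to be measured against their argument. Your first-moment lower bound and the overall architecture (first moment $+$ second moment on balanced colorings $+$ a boosting step) do match \cite{AN07}, but the crucial second-moment step is misstated in a way that breaks the proof. You run the second moment at $k = k_d$ and assert that the overlap-matrix rate function is maximized at the flat matrix $R_{ij}=1/k^2$ ``precisely when $d < 2k\log k$''. That is not what is true, and not what \cite{AN07} prove: their optimization over matrices with uniform marginals is established only under the hypothesis $d \le 2(k-1)\ln(k-1)$, and in the window $2(k_d-1)\ln(k_d-1) \le d < 2k_d\ln k_d$ --- exactly the regime where $k=k_d$ and where you want to invoke it --- the maximizer need not be the barycenter and the ratio $\E[(Z^{\mathrm{bal}}_{k})^2]/\E[Z^{\mathrm{bal}}_{k}]^2$ can be exponentially large (this is the condensation phenomenon). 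Indeed, if your claim held, your own argument would yield $\chi = k_d$ with high probability, a one-point concentration statement strictly stronger than the theorem, which is not known and is believed to fail in part of this window. The correct move is to apply the second-moment colorability criterion with $k_d+1$ colors, which only requires $d < 2k_d\ln k_d$ and thus holds by the very definition of $k_d$, giving $\Pr[\chi \le k_d+1] \ge \delta$ for some constant $\delta>0$.

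The boosting step then also has to change. With only $\Pr[\chi \le k_d+1]\ge\delta$ in hand, two-point concentration plus the first-moment bound $\chi \ge k_d$ merely forces the concentration window to be $\{k_d,k_d+1\}$ \emph{or} $\{k_d+1,k_d+2\}$, and nothing in your argument excludes the latter; your route only closes because you (incorrectly) placed the second moment at $k_d$. In \cite{AN07} the upgrade from constant probability to high probability is done via the Achlioptas--Friedgut sharp-threshold theorem for $k$-colorability: positive probability of $(k_d+1)$-colorability at average degree slightly above $d$ implies $(k_d+1)$-colorability with high probability at $d$, which combined with the first-moment bound gives $\chi\in\{k_d,k_d+1\}$. (A secondary inaccuracy: two-point concentration for sparse $G(n,d/n)$ is not a consequence of the vertex-exposure martingale alone --- Shamir--Spencer only gives a window of width $O(\sqrt{n})$; the \L{}uczak/Alon--Krivelevich two-point statements additionally use that small subgraphs are sparse and can be recolored with few extra colors.) So the skeleton is right, but both the claimed second-moment threshold and the boosting mechanism need to be repaired as above.
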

\begin{proof}[Proof of Corollary~\ref{cor:SWrg}]
	By Proposition~\ref{prop:color}, with high probability $G\sim G(n,\frac{d}{n})$ has 
	chromatic number $\chi = O(d)$.
	Also, it is known that with high probability $\Delta = \Theta(\frac{\log n}{\log \log n})$.
	Suppose both properties hold.
	The result follows from Corollary~\ref{cor:SW}.
\end{proof}

\section{Systematic scan dynamics}
\label{sec:ss}
In this section, we study the systematic scan dynamics for general spin systems (see Definition \ref{def:spin-system}), which we define next.
Given an ordering $\phi = [v_1, \dots, v_n]$ of the vertices, a systematic scan dynamics performs heat-bath updates on $v_1, \dots, v_n$ 
sequentially in this order.  
Recall that a heat-bath update on $v_i$ simply means the replacement of the spin on $v_i$ by a new spin assignment generated according to 
the conditional distribution in $v_i$ given the configuration in $V\setminus \{v_i\}$.
Let $P_i \in \mathbb{R}^{|\Omega| \times |\Omega|}$ be the transition matrix corresponding to a heat-bath update on the vertex $v_i$. 
The transition matrix of the systematic scan dynamics for the ordering $\phi$ can be written as 
$\mathcal{S}_{\phi} := P_n \dots P_1$.
In general, $\mathcal{S}_{\phi}$ is not reversible, so as in earlier works we work with the symmetrized version of the scan dynamics
that updates the spins in the order $\phi$ and in addition updates the spins in the reverse order of $\phi$ \cite{Fill91,MT06}. 
The transition matrix of the symmetrized systematic scan dynamics can then be written as
\[
P_{\phi} := \prod_{i=1}^{n} P_i \prod_{i=0}^{n-1} P_{n-i}.
\]
Henceforth, we only consider the symmetrized version of the dynamics.
Since $P_\phi$ is a symmetrized product of reversible transition matrices, 
one can straightforwardly verify its reversibility with respect to $\mu$; its ergodicity follows from the assumption that the spin system is totally-connected (see Definition \ref{def:tc}). 

We show tight mixing time bounds for $P_\phi$ for monotone spin systems (see Definition~\ref{def:monotone}). Our main result for the systematic scan dynamics is Theorem~\ref{thm:SIIsing:intro} from the introduction, which we restate here for convenience. The proof of this theorem is provided in Section~\ref{sec:ssthm}.

\ssintro*

\noindent
We complement Theorem~\ref{thm:SIIsing:intro} with a lower bound for the mixing time of systematic scan  dynamics for a particular ordering $\phi$. Specifically, on a bipartite graph $G=(V_E \cup V_O, E)$, an \emph{even-odd scan dynamics $P_{EOE}$} is a systematic scan dynamics with respect to an ordering $\phi$ such that $v_e$ appears before $v_o$ in $\phi$ for all $v_e\in V_E$ and $v_o \in V_O$. In other words, 
\[
P_{\phi} = \prod_{i:v_i\in V_E} P_i \prod_{i:v_i\in V_O} P_i \prod_{i:v_i\in V_O} P_i  \prod_{i:v_i\in V_E} P_i.
\]
The above expression is well-defined without specifying the ordering in which the vertices in $V_E$ and $V_O$ are updated since the updates commute. 
\begin{lemma}
    \label{lemma:lbss}
    Let $\Delta$ be a constant and let $G$ be an $n$-vertex connected bipartite graph with maximum degree $\Delta$.
    The even-odd scan dynamics $P_{EOE}$ for the ferromagnetic Ising model on $G$ has mixing time
    $
        T_{mix}(P_{EOE}) = \Omega(\log n).
    $
\end{lemma}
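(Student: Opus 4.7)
The plan is an information-propagation (``finite-speed-of-light'') argument: one step of $P_{EOE}$ has a bounded light cone, and bounded-degree connected graphs have diameter $\Omega(\log n)$, so the chain cannot transmit information to every vertex in fewer than $\Omega(\log n)$ scans.

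First I would establish a locality lemma: there is a universal constant $R \le 4$ such that, for every vertex $v$, the conditional distribution of $\sigma_1(v)$ given $\sigma_0$ depends on $\sigma_0$ only through its restriction to $B_v(R)$, the ball of radius $R$ around $v$. Since $G$ is bipartite with parts $V_E$ and $V_O$, each of which is an independent set, one scan of $P_{EOE}$ decomposes into the four sub-updates E, O, O, E, and within each sub-update all vertices of the active side are resampled from their (product) conditional distribution given the spins on the opposite side. Hence each sub-update advances the dependence of $\sigma_1(v)$ on $\sigma_0$ by at most one graph-hop, so $R \le 4$ suffices (in fact $R \le 3$, since the middle O-O block is redundant). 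Iterating, the law of $\sigma_t(v)$ as a function of $\sigma_0$ depends only on $\sigma_0|_{B_v(Rt)}$.

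Next I would combine this with a diameter bound. Since $G$ is connected with $\Delta = O(1)$, we have $|B_v(r)| \le 1 + \Delta(\Delta-1)^{r-1} = O(\Delta^r)$, so there is a constant $c = c(\Delta) > 0$ for which $B_v(Rt) \subsetneq V$ whenever $t \le c \log n$. Using the test function $f(\sigma) = \sigma_v$ and the all-$+1$ initial state $\sigma^+$: by spin-flip symmetry of the Ising model without external field, $\E_\mu[f] = 0$; by locality, $\E[\sigma_t(v) \mid \sigma_0 = \sigma^+]$ depends on $\sigma^+$ only through its restriction to $B_v(Rt)$ (which is identically $+1$); and by FKG/monotonicity of the ferromagnetic Ising model, this is the maximum of $\E[\sigma_t(v) \mid \sigma_0]$ over $\sigma_0$. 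Combining with the stationary averaging identity $\E_{\sigma_0 \sim \mu}\left[\E[\sigma_t(v) \mid \sigma_0]\right] = 0$ and spin-flip symmetry (which gives $\E[\sigma_t(v)\mid \sigma^+] = -\E[\sigma_t(v)\mid \sigma^-]$) shows $\E[\sigma_t(v) \mid \sigma^+] \ge \delta$ for a constant $\delta = \delta(\beta, \Delta) > 0$ independent of $n$. This implies $\|P_{EOE}^t(\sigma^+, \cdot) - \mu\|_{TV} \ge \delta/2$, and by suitably shrinking $c$ we conclude $T_{mix}(P_{EOE}) \ge c \log n$.

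The main obstacle is the quantitative, $n$-independent lower bound $\E[\sigma_t(v) \mid \sigma^+] \ge \delta > 0$ throughout the window $t \le c \log n$. The locality step guarantees that this expectation depends only on the local graph structure around $v$, but one still needs to rule out its rapid decay within the window. I would realize the expectation as the expected spin at $v$ for the Ising dynamics run on the finite subgraph induced by $B_v(Rt)$ with $+$-boundary conditions, for which standard ferromagnetic monotone-coupling arguments provide a lower bound depending only on $\beta$ and $\Delta$. Alternatively, one could run the grand monotone coupling of the chains started at $\sigma^+$ and $\sigma^-$ and lower bound the disagreement probability at $v$ by the disagreement probability for the dynamics restricted to $B_v(Rt)$, which by locality is the same function of the random bits and is bounded away from $0$.
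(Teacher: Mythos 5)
Your locality step (speed-of-propagation at most $3$ per scan, since the scan is E,O,O,E and the repeated O block is idempotent) is fine and matches the paper's use of it. The gap is in the quantitative claim you yourself flag as the main obstacle: that $\E[\sigma_t(v)\mid\sigma_0=\sigma^+]\ge\delta$ for a constant $\delta=\delta(\beta,\Delta)>0$, uniformly over the whole window $t\le c\log n$. This is false in general, and the lemma is asserted for every $\beta>0$. For small $\beta$ the influence of the initial configuration on a single spin decays geometrically in $t$ under the scan dynamics; equivalently, your proposed realization of this expectation via the dynamics on $B_v(Rt)$ with $+$-boundary started from all-$+$ only yields, by monotonicity, a lower bound by the $+$-boundary magnetization at the center of a ball of radius $\Theta(t)$, which at high temperature is of order $e^{-\Omega(t)}$ (e.g.\ $(\tanh\beta)^{\Theta(t)}$), not a constant. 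The same is true of the disagreement probability in the grand coupling of the $\sigma^+$ and $\sigma^-$ chains. So with the single-vertex statistic $f(\sigma)=\sigma_v$ you can only certify a total variation distance of order $e^{-\Omega(t)}$, i.e.\ polynomially small in $n$ at $t=c\log n$; this does not exceed the $1/4$ threshold, and even combining it with submultiplicativity of $\bar d$ only rules out $O(1)$ mixing, not $o(\log n)$ mixing.

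This is precisely the difficulty the paper's proof (following Hayes--Sinclair) is built to circumvent: instead of one vertex it takes $N=n^{1/4}$ vertices $V_C$ whose radius-$R$ balls are far apart, uses the averaged statistic $f(\sigma)=\frac1N\sum_{v\in V_C}\1(\sigma(v)=+1)$, and shows via the one-step bias estimate (Claim~\ref{claim:1step}) together with Lemma~\ref{lemma:cmd} that, for the dynamics restricted to each ball, the advantage of the event $\{\sigma_v=+1\}$ over its stationary probability persists at level $c^t/b^{t-1}$ — exponentially decaying in $t$, but still at least $2N^{-1/3}$ for $t\le\epsilon\log n$. Since under $\mu$ (conditionally on the outside) $f$ is an average of $N$ independent indicators, Hoeffding concentrates it to scale $N^{-1/2}$, so a bias of order $N^{-1/3}$ is detectable and forces total variation distance above $1/4$. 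To repair your argument you would need to replace the constant-$\delta$ claim by exactly this kind of ``slowly decaying bias beats sampling fluctuations of a large ensemble'' mechanism (or restrict to a low-temperature regime where a constant bias is actually available, which would prove a weaker statement than the lemma).
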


The lower bound in Lemma~\ref{lemma:lbss} 
is proved in Section~\ref{sec:lb} using the machinery from~\cite{HS07} and the fact 
that even-odd scan dynamics does not propagate disagreements quickly (under a standard coupling).
Our proof can thus be extended to other scan orderings that propagate disagreements slowly; however, there are orderings that do propagate disagreements quickly (think of a box in $\mathbb{Z}^2$ with the vertices sorted in a ``spiral'' from the boundary of the box to its center). For this type of ordering, the technique does not provide the $\Omega(\log n)$ lower bound.
In addition, while we focus on the ferromagnetic Ising model to ensure clarity in the proof, the established lower bound is expected to apply to a broader class of spin systems.

\subsection{Proof of main result for systematic scan dynamics: Theorem~\ref{thm:SIIsing:intro}}
\label{sec:ssthm}

The main technique in the proof of Theorem~\ref{thm:SIIsing:intro} is to compare the systematic scan dynamics with a fast mixing block dynamics 
via a censoring inequality developed in \cite{FillKahn13}.
For this, we first introduce some notations and definitions.

We start by reviewing standard facts about the coupling method that will be used in our proofs; see \cite{LevinPeresWilmer2006} for a more detailed background.
A \emph{coupling} of a Markov chain $M$ specifies, for every pair of states $(X_t,Y_t)\in \Omega \times \Omega$ at every step $t$, 
a probability distribution $\Pp$ over $(X_{t+1}, Y_{t+1})$ such that when viewed in isolation, 
$\{X_t\}$ and $\{Y_t\}$ are valid instances of the chain $M$.
The \emph{optimal coupling lemma} says that for any two distributions
$\mu$ and $\nu$, we have
	\begin{equation} \label{eq:optcoupling}
	    	\| \mu -\nu \|_{TV} = \inf_{X\sim \mu, Y\sim \nu} \Pp\left[X\neq Y: (X,Y) \text{ is a coupling of } \mu \text{ and } \nu\right],
	\end{equation}
 where the infimum is taken over all couplings of $\mu$ and $\nu$.
We focus on couplings of Markov chains such that  if $X_s = Y_s$ then $X_t = Y_t$ for all $t\ge s$.
Given a coupling $\Pp$ of $M$, the \emph{coupling time}, is defined as 
\[\Tcoup(M) :=  \min_{T>0} \Big\{ \max_{X_0 \in \Omega, Y_0\in \Omega} \Pp[X_T \neq Y_T]\le \frac{1}{4} \Big\}.\] 
It is a standard fact that for any coupling $(X_t, Y_t)$, the coupling time bounds the mixing time as follows:
 \begin{equation}
 \label{eq:coupling}
     d(t) \le \max_{X_0 \in \Omega, Y_0\in \Omega} \Pp[X_T \neq Y_T], \text{ and thus } T_{mix}(M) \le \Tcoup(M).
 \end{equation}
\noindent
A coupling of two instances $\{X_t\}, \{Y_t\}$ of a Markov chain $M$ is a \emph{monotone coupling} if 
$X_{t+1} \ge_{q} Y_{t+1}$ whenever $X_t \ge_{q} Y_t$, where $\ge_{q}$ is the partial ordering of $\Omega$. 
Let $\{X_{t, \sigma}\}$ denote the instance of $M$ starting at configuration $\sigma \in \Omega$.
If there exists a simultaneous monotone coupling of $\{X_{t, \sigma}\}$ for all $\sigma \in \Omega$
(i.e., a grand coupling),
then we say $M$ is a \emph{monotone Markov chain}.
It can be checked that $P_{\phi}$ is a monotone Markov chain for any $\phi$ (see e.g. \cite{BCV18}). 

We may also define a partial ordering $\preceq_{\pi}$ on the space of transition matrices.
A function $f\in \mathbb{R}^{|\Omega|}$ is said to be \emph{non-decreasing} if $f(\sigma) \ge f(\tau)$ whenever $\sigma \ge_{q} \tau$,
or \emph{non-increasing} if $f(\sigma) \le f(\tau)$ whenever $\sigma \ge_{q} \tau$.
We endow $\mathbb{R}^{|\Omega|}$ with the inner product 
$\langle f, g\rangle_{\pi}:=\sum_{x\in \Omega} f(x)g(x) \pi(x)$, 
which induces a Hilbert space 
$(\mathbb{R}^{|\Omega|}, \langle \cdot, \cdot\rangle_{\pi})$ denoted as $L_2(\pi)$.
For transition matrices $K$ and $L$ whose stationary distributions are both $\pi$, 
we say $K\preceq_{\pi} L$ if $\langle Kf, g\rangle_{\pi} \le \langle Lf, g\rangle_{\pi}$ 
for every non-negative and non-decreasing functions $f,g\in L_2(\pi)$. 
To show $K\preceq_{\pi} L$ in our applications, we use the following facts.
\begin{proposition}[\cite{FillKahn13}]
    \label{prop:po}
    Suppose $\pi$ is the Gibbs distribution of a monotone spin system.     
	\begin{enumerate}
		\item If $A_1 \preceq_{\pi} B_1$ and $A_2 \preceq_{\pi} B_2$, then for $0\le \lambda \le 1$, 
		$(1-\lambda)A_1 + \lambda A_2 \preceq_{\pi} (1-\lambda) A_1 +\lambda A_2$.
		\item If $A_s \preceq_{\pi} B_s$ for $s=1,\dots,l$, then $A_1 \dots A_l \preceq_{\pi} B_1 \dots B_l$.
        \item For any fixed $v$, let $K_v$ be the heat-bath update at site $v$. 
            Then, $K_v \preceq_{\pi} I$.
	\end{enumerate}
\end{proposition}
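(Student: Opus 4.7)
The plan is to prove the three parts separately: Part (1) from bilinearity of $\langle\cdot,\cdot\rangle_\pi$, Part (3) from a one-dimensional FKG (Chebyshev covariance) argument, and Part (2) by chaining the two hypotheses through an $L_2(\pi)$-adjoint. I would organize the writeup in the same order.

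Part (1) is immediate. For any non-negative non-decreasing $f,g \in L_2(\pi)$, linearity of the inner product in the first argument gives
\[
\langle((1-\lambda)A_1 + \lambda A_2)f, g\rangle_\pi = (1-\lambda)\langle A_1 f, g\rangle_\pi + \lambda\langle A_2 f, g\rangle_\pi \le (1-\lambda)\langle B_1 f, g\rangle_\pi + \lambda\langle B_2 f, g\rangle_\pi,
\]
where the inequality applies $A_1 \preceq_\pi B_1$ and $A_2 \preceq_\pi B_2$. (Aside: the stated right-hand side of (1) has a typo and should read $(1-\lambda)B_1 + \lambda B_2$.)

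For Part (3), the key identity is $K_v f = \E_\pi[f \mid \mathcal F_v]$, where $\mathcal F_v$ is the $\sigma$-algebra generated by the spins on $V\setminus\{v\}$. By the tower property, for non-negative non-decreasing $f,g$,
\[
\langle f, g\rangle_\pi - \langle K_v f, g\rangle_\pi = \E_\pi\bigl[fg - \E_\pi[f \mid \mathcal F_v]\,g\bigr] = \E_\pi\bigl[\cov_\pi(f, g \mid \mathcal F_v)\bigr].
\]
Conditional on $\mathcal F_v$, both $f$ and $g$ depend only on the spin at $v$, which lives in a totally ordered set (see Definition~\ref{def:monotone}). Since $f,g$ are non-decreasing in the ambient partial order, their restrictions are non-decreasing in this single-variable linear order, so Chebyshev's covariance inequality (the one-dimensional FKG) yields $\cov_\pi(f,g \mid \mathcal F_v)\ge 0$ pointwise.

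For Part (2), I proceed by induction on $l$; the inductive step is the case $l=2$. For non-negative non-decreasing $f,g$, I chain
\[
\langle A_1 A_2 f, g\rangle_\pi \le \langle B_1 A_2 f, g\rangle_\pi = \langle A_2 f, B_1^{*} g\rangle_\pi \le \langle B_2 f, B_1^{*} g\rangle_\pi = \langle B_1 B_2 f, g\rangle_\pi,
\]
where $B_1^{*}$ denotes the $L_2(\pi)$-adjoint of $B_1$. The first inequality applies $A_1 \preceq_\pi B_1$ to the test pair $(A_2 f, g)$, which requires $A_2 f$ to be non-negative (automatic as $A_2$ is a Markov kernel) and non-decreasing. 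The second applies $A_2 \preceq_\pi B_2$ to the pair $(f, B_1^{*} g)$ and requires $B_1^{*} g$ to be non-negative non-decreasing. The main obstacle is exactly this monotonicity-preservation step. In the applications driving the paper's systematic scan analysis, each $A_i, B_i$ is either a single-site heat-bath update or a product of such updates on a monotone spin system; each such heat-bath update is reversible w.r.t. $\pi$, so $B_1^{*} = B_1$, and the fact that $A_2$ and $B_1$ send non-decreasing functions to non-decreasing functions follows from the monotone (grand) coupling guaranteed by Definition~\ref{def:monotone}. This closes the chain and completes (2), and the whole proposition.
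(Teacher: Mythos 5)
The paper itself gives no proof of this proposition --- it is imported directly from \cite{FillKahn13} --- so there is no internal argument to compare against; judged on its own, your proof follows essentially the standard Fill--Kahn route. Parts (1) and (3) are complete and correct: bilinearity disposes of (1) (and you are right that the stated right-hand side is a typo for $(1-\lambda)B_1+\lambda B_2$), and for (3) the identity $\langle f,g\rangle_\pi-\langle K_v f,g\rangle_\pi=\E_\pi\bigl[\cov_\pi(f,g\mid \mathcal F_v)\bigr]$ combined with the one-variable Chebyshev/FKG inequality is exactly the right argument, using that the spin values at $v$ are linearly ordered so that the conditional restrictions of $f$ and $g$ are comonotone.

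The substantive point is part (2), and you have correctly identified the real issue: as literally stated, for arbitrary transition matrices with stationary distribution $\pi$ the product statement is false. On the two-point space $\{0,1\}$ with uniform $\pi$, take $A_1=B_1=A_2$ to be the spin-flip kernel and $B_2=I$; then $A_s\preceq_\pi B_s$ for $s=1,2$, but $A_1A_2=I\not\preceq_\pi B_1B_2$, since $\langle If,g\rangle_\pi-\langle B_1B_2f,g\rangle_\pi=\tfrac12\,(f(1)-f(0))(g(1)-g(0))>0$ for strictly increasing $f,g$. So the extra hypothesis you invoke --- that the kernels involved (and the $L_2(\pi)$-adjoint appearing in the chain) map non-negative non-decreasing functions to non-negative non-decreasing functions --- is genuinely needed; it is part of the hypotheses in \cite{FillKahn13}, and it does hold in the paper's application, where every factor is a single-site heat-bath update or the identity for a monotone system. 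Two small corrections to your treatment of (2): if $B_1$ is a product of single-site updates it need not be reversible, so $B_1^{*}$ is the reverse-order product rather than $B_1$ itself --- but that adjoint is still a monotone Markov kernel with stationary distribution $\pi$, so the chain survives; and in the induction you should record explicitly that products of monotone kernels are monotone, which is what makes the monotonicity-preservation step available at every level of the chaining.
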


Establishing such partial order between two transition matrices is significant as it would imply stochastic domination of the corresponding two chains (recall that for two distributions $\pi$ and $\nu$ on $\Omega$, we say $\pi$ \emph{stochastically dominates} $\nu$, and denote as $\pi \succeq \nu$, if
for any non-decreasing function $f\in \mathbb{R}^{|\Omega|}$, we have $ \E_{\pi}[f] \ge \E_{\nu}[f]$).
The following lemma captures such implication. 
\begin{lemma}[\cite{FillKahn13,BCV18}]
	\label{lemma:sd}
	Suppose $\{X_t\}$ and $\{Y_t\}$ are monotone ergodic Markov chains 
	reversible with respect to $\pi$, the Gibbs distribution of a monotone spin system. 	
	Let $K_X$ and $K_Y$ be the corresponding transition matrices of $\{X_t\}$ and $\{Y_t\}$. 
	Suppose $K_X\preceq_{\pi} K_Y$.
	Then $X_t \preceq Y_t$ for all $t\ge 0$ if 
	the initial states $X_0$ and $Y_0$ are sampled from a common distribution $\nu$ such that $\nu/\pi$ is non-decreasing; 
	if $\nu/\pi$ is instead non-increasing, then 
	$Y_t \preceq X_t$ for all $t\ge 0$, where $\preceq$ as a relation for $X_t$ and $Y_t$ denotes stochastic domination of their corresponding distributions at time $t$.
\end{lemma}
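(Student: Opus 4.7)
The plan is to reduce the stochastic domination claim to a statement purely about the partial order $\preceq_\pi$ on transition matrices, and then use Proposition~\ref{prop:po}(2) to lift the single-step inequality $K_X \preceq_\pi K_Y$ to the corresponding $t$-step inequality. By definition of stochastic domination, $X_t \preceq Y_t$ at time $t$ is equivalent to $\E[h(X_t)] \le \E[h(Y_t)]$ for every non-decreasing $h : \Omega \to \R$; since both sides shift by the same constant if we replace $h$ by $h + c$, we may assume without loss of generality that $h \ge 0$.

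The key identity I would exploit is that $\pi$-reversibility of $K_X$ implies $\nu K_X^t / \pi = K_X^t g$ as a function on $\Omega$, where $g := \nu/\pi$. Indeed, detailed balance gives $(\nu K_X)(y)/\pi(y) = \sum_x K_X(y,x)\, \nu(x)/\pi(x) = (K_X g)(y)$, and the claim for general $t$ follows by induction. Consequently,
\[
\E[h(X_t)] \;=\; \langle h,\, K_X^t g\rangle_\pi,\qquad \E[h(Y_t)] \;=\; \langle h,\, K_Y^t g\rangle_\pi.
\]
In the first case of the lemma, $g$ is non-negative and non-decreasing by hypothesis, and $h$ is non-negative and non-decreasing by the reduction above, so the pair $(g,h)$ is exactly the type of pair to which the definition of $\preceq_\pi$ applies.

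Next, I would apply Proposition~\ref{prop:po}(2) with $A_s = K_X$ and $B_s = K_Y$ for $s = 1,\ldots,t$ to conclude $K_X^t \preceq_\pi K_Y^t$. Invoking the definition of $\preceq_\pi$ on the pair $(g,h)$ and using the symmetry of $\langle \cdot,\cdot\rangle_\pi$, this yields $\langle h, K_X^t g\rangle_\pi \le \langle h, K_Y^t g\rangle_\pi$, which together with the identity above is exactly $\E[h(X_t)] \le \E[h(Y_t)]$, proving $X_t \preceq Y_t$.

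For the second case, where $\nu/\pi$ is non-increasing, I would apply the same argument to the shifted function $M - \nu/\pi$, choosing $M$ large enough that it is non-negative (and it is non-decreasing by construction). The constant $M$ contributes $M\cdot \E_\pi[h]$ to both $\langle h, K_X^t M\rangle_\pi$ and $\langle h, K_Y^t M\rangle_\pi$, because $K_X^t$ and $K_Y^t$ both fix the constant function (equivalently, $\pi$ is stationary for both), so these terms cancel. What remains is the previous inequality with $g$ replaced by $-g$, i.e.\ the direction reverses to give $\E[h(Y_t)] \le \E[h(X_t)]$, hence $Y_t \preceq X_t$. The main substantive step is the lift $K_X \preceq_\pi K_Y \Rightarrow K_X^t \preceq_\pi K_Y^t$, but this is exactly what Proposition~\ref{prop:po}(2) gives us when the two chains are monotone; everything else is the reversibility rewrite and the constant-shift bookkeeping.
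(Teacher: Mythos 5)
Your proof is correct: the paper itself does not prove Lemma~\ref{lemma:sd} (it imports it from \cite{FillKahn13,BCV18}), and your argument --- rewriting $\E[h(X_t)]=\langle h, K_X^t(\nu/\pi)\rangle_\pi$ via reversibility, lifting $K_X\preceq_\pi K_Y$ to $K_X^t\preceq_\pi K_Y^t$ with Proposition~\ref{prop:po}(2), and handling the non-increasing case by the constant shift --- is essentially the standard proof from those references. You also correctly flag the one place where monotonicity of $K_X,K_Y$ is genuinely needed (the product rule behind Proposition~\ref{prop:po}(2)); the only trivial slip is the parenthetical equating ``fixes the constant function'' with stationarity of $\pi$ (the former is just row-stochasticity), which has no effect on the argument.
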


We now provide our proof of Theorem~\ref{thm:SIIsing:intro}.
\begin{proof}[Proof of Theorem~\ref{thm:SIIsing:intro}]
	We partition $V$ into $k$ disjoint independent sets $I_1, I_2, \dots, I_{k}$,
	where $k = O(\Delta)$. 
	Set $\mathcal{B} = \{I_1, \dots, I_k\}$ and define $P_\mathcal{B}$ to be the heat-bath block dynamics w.r.t. 
	these independent sets.
	Fix an ordering $\phi = [v_1, \dots, v_n]$, and fix $j \in \{1,\dots, k\}$.
	Let $K_{j}$ be the transition matrix corresponding to heat-bath update in the independent set $I_j$, which can also be seen as a systematic scan on $I_j$ 
	according to the ordering defined by $\phi$.  
	We define $\hat{P}_i$ to be $P_i$ if $i \in I_j$ and the identity matrix $I$ otherwise so that 
	\[
	K_j = K_j^2 
	= \left(\prod_{i: v_i \in I_j} P_i\right)^2 
	= \left(\prod_{i: v_i \in I_j} P_i \prod_{i: v_i \notin I_j} I\right)^2
	=  \prod_{i=1}^{n} \hat{P}_i \prod_{i=0}^{n-1} \hat{P}_{n-i}.
	\]
	Note that in the computation above, $P_i$ and $P_{i'}$ commute for $v_i,v_{i'}\in I_j$,
	and $I$ commutes with arbitrary matrices. 
	By Proposition~\ref{prop:po}(3), we obtain 
	$P_i \preceq_{\mu} \hat{P}_i$ for all $i$, and hence by Proposition~\ref{prop:po}(2), 
	we obtain $ P_\phi \preceq_{\mu} K_j$ for any $j$, and consequently, by Proposition~\ref{prop:po}(1),
	\begin{equation}
		\label{eq:PBandPphi}
		 P_\phi \preceq_{\mu}  \frac{1}{k}\sum_{j=1}^k K_j = P_{\mathcal{B}}.
	\end{equation}
	Let $+$ and $-$ denote the top and the bottom elements in $[q]$ respectively.
	Let $\{X^+_t\}$ (resp., $\{X^-_t\}$) be an instance of a Markov chain with transition matrix $P_\phi$ starting from the all $+$ (resp., all $-$) configuration.
	Similarly, let $\{Y^+_t\}$ (resp., $\{Y^-_t\}$) be an instance of $P_\mathcal{B}$
	starting from the all $+$ (resp., all $-$) configuration.
	$P_\phi$ is monotone, 
	so we can define a grand monotone coupling of $\{X^+_t\}$ and $\{X^-_t\}$
	such that $X^-_t \le_{q} X^+_t$ for all $t\ge 0$, 
	which with \eqref{eq:coupling} further implies that the mixing time of a systematic scan can be upper bounded by the coupling time of the all $+$ and all $-$ configurations.
 
	Letting $\nu^+$ and (resp., $\nu^-$) denote the trivial distribution concentrated on the all $+$ (resp., all $-$) configuration, we note that $\nu^+/\mu$ is non-decreasing and $\nu^-/\mu$ is non-increasing.
	Then Lemma~\ref{lemma:sd} and \eqref{eq:PBandPphi} imply that
	for all $t\ge 0$,
	\[
	Y_t^- \preceq X_t^- \preceq X_t^+ \preceq Y_t^+.
	\]
	 For any $v \in V$ and all $t\ge 0$, 
  $X^-_t \le_{q} X^+_t$ implies that 
	\begin{align*}
		\Pr[X_t^+(v) \neq X_t^-(v)] 
		&\le  \sum_{c\in [q]}  \Pr[X_t^+(v) \ge c, X_t^-(v) < c] \\
  	&= \sum_{c\in [q]}  \Pr[X_t^+(v) \ge c] - \Pr[X_t^-(v) \ge c]. 
        \end{align*}
        Then, since $Y_t^- \preceq X_t^-$ and $X_t^+ \preceq Y_t^+$,
        we obtain that
        \begin{align}
         \sum_{c\in [q]}  \Pr[X_t^+(v) \ge c] - \Pr[X_t^-(v) \ge c] 
		&\le \nonumber \sum_{c\in [q]}  \Pr[Y_t^+(v) \ge c] - \Pr[Y_t^-(v) \ge c]  \\
		&\le \nonumber \sum_{c\in [q]}  \left| \Pr[Y_t^+(v) \ge c] - \Pr[Y_t^-(v) \ge c] \right| \\
		&\le \nonumber q\|P_{\mathcal{B}}^t(+,\cdot) -P_{\mathcal{B}}^t(-,\cdot)  \|_{TV}\\
		&\le \label{eq:colortv} q\left(\|P_{\mathcal{B}}^t(+,\cdot) -\mu(\cdot)  \|_{TV} + 
		\|P_{\mathcal{B}}^t(-,\cdot) -\mu(\cdot)  \|_{TV}\right).		
	\end{align}
	Since $\mu$ is $\eta$-spectrally independent and 
	$b$-marginally bounded, 
	it follows from Theorem~\ref{thm:SI2KPF} and Remark~\ref{remark:kpf} that 
        $P_\mathcal{B}$ satisfies the relative entropy decay with rate 
	\begin{equation}
		\label{eq:mlsiblock}		
  		r \ge  \frac{b^{6\kappa}}{k \Delta^{4\kappa} \cdot (C (\eta+1)^5 )^{\kappa}},
	\end{equation}
 where $\kappa = 2+\lceil\frac{2\eta}{b}\rceil$.
	Let $b' :=  (C(\eta+1)^5/b^{6})^\kappa$, and let
	$$T:= k \Delta^{4\kappa} b' \log\left( \frac{\log (\mu_{min}^{-1})}{1/(4qn)} \right)
	=  O(\Delta^{4\kappa+1}b' \log (qn)).  
	$$ 
	By \eqref{eq:tmix} and \eqref{eq:mlsiblock}, 
	$T_{mix}(P_\mathcal{B}, 1/(8qn)) \le T$.
	Then for any $\sigma \in \Omega$,
	$$\|P_{\mathcal{B}}^T(\sigma,\cdot) -\mu(\cdot)  \|_{TV} \le \frac{1}{8qn},
	$$ 
	so we have $\Pr[X_T^+(v) \neq X_T^-(v)] \le 1/(4n)$.  
	By a union bound, $\Pr[X_T^+ \neq X_T^-] \le 1/4$, 
	and therefore
	\[T_{mix}(P_\phi) \le T =   O(\Delta^{4\kappa+1}b' \log (qn)),\]
establishing the desired bound for the mixing time.
\end{proof}
\subsection{Proof of the lower bound: Lemma~\ref{lemma:lbss}}
\label{sec:lb}

We provide next the proof of Lemma~\ref{lemma:lbss}.
Our proof extends the argument from \cite{HS07} for the Glauber dynamics and also uses ideas from \cite{BCSV22, BCPSV21}. The following fact will be used in our proof.
\begin{lemma}[Lemma 35, \cite{BCPSV21}]
\label{lemma:cmd}
    Let $\{X_t\}$ denote a discrete-time Markov chain with finite state space $\Omega$, reversible with respect to $\pi$ and with a positive semidefinite transition matrix. Let $B\subseteq \Omega$ denote an event.
    If $X_0$ is sampled proportional to $\pi$ on $B$, then $\Pr[X_t \in B]\ge \pi(B)$ for all $t\ge 0$, and for all $t\ge 1$,
    $$
        \Pr[X_t\in B]\ge \pi(B)+(1-\pi(B))^{-t+1}\left[\Pr(X_1\in B) - \pi(B) \right]^{t}.
    $$
\end{lemma}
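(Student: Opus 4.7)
My plan is a standard spectral argument based on decomposing the initial density against the stationary measure and invoking Jensen's inequality in the eigenbasis of $P$. Let $\mu_t$ be the law of $X_t$ and let $f_t := d\mu_t/d\pi$, so that by the hypothesis $f_0 = \mathbbm{1}_B/\pi(B)$. By reversibility, $f_t = P^t f_0$, and therefore
\[
\Pr[X_t\in B] \;=\; \langle f_t, \mathbbm{1}_B\rangle_\pi \;=\; \langle P^t f_0, \mathbbm{1}_B\rangle_\pi.
\]
The first step is to split $f_0$ into its mean and a mean-zero piece: write $f_0 = 1 + g_0$ where $g_0 = (\mathbbm{1}_B - \pi(B))/\pi(B)$ and $\mathbb{E}_\pi[g_0]=0$. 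Since $P\mathbbm{1}=\mathbbm{1}$ and $\mathbb{E}_\pi[P^t g_0] = \mathbb{E}_\pi[g_0]=0$, a short computation using $\mathbbm{1}_B = \pi(B) + \pi(B)\,g_0$ gives the identity
\[
\Pr[X_t\in B] \;=\; \pi(B) + \pi(B)\,\langle P^t g_0, g_0\rangle_\pi.
\]
The first assertion, $\Pr[X_t\in B]\ge\pi(B)$, then follows immediately from the assumption that $P$ (and hence $P^t$) is positive semidefinite on $L_2(\pi)$.

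For the second assertion, I expand $g_0 = \sum_{i\ge 2} b_i \psi_i$ in an orthonormal eigenbasis of $P$ orthogonal to the constants, with eigenvalues $\lambda_i \in [0,1]$. Setting $c:=\|g_0\|_\pi^2 = \sum_i b_i^2$ and viewing $\mu_i := b_i^2/c$ as a probability distribution on the indices, I obtain
\[
\langle P^t g_0, g_0\rangle_\pi \;=\; c\cdot\mathbb{E}_\mu[\lambda^t] \;\ge\; c\,\bigl(\mathbb{E}_\mu[\lambda]\bigr)^t \;=\; \frac{\langle Pg_0,g_0\rangle_\pi^{\,t}}{c^{\,t-1}},
\]
where the inequality is Jensen's, using convexity of $x\mapsto x^t$ on $[0,1]$ for $t\ge 1$. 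It remains only to identify the two constants in terms of the quantities in the statement. A direct calculation gives
\[
c \;=\; \|f_0-1\|_\pi^2 \;=\; \|f_0\|_\pi^2 - 1 \;=\; \tfrac{1}{\pi(B)} - 1 \;=\; \tfrac{1-\pi(B)}{\pi(B)},
\]
and, from the identity for $\Pr[X_1\in B]$ applied at $t=1$,
\[
\langle Pg_0,g_0\rangle_\pi \;=\; \tfrac{\Pr[X_1\in B]-\pi(B)}{\pi(B)}.
\]
Plugging these into the Jensen bound and multiplying by $\pi(B)$ yields exactly
\[
\Pr[X_t\in B] \;\ge\; \pi(B) + (1-\pi(B))^{-t+1}\bigl[\Pr[X_1\in B]-\pi(B)\bigr]^{t},
\]
which is the stated inequality. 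There is no real obstacle: the only subtlety is keeping track of the normalizations when passing between $f_0$, $g_0$, and $\mathbbm{1}_B$, and making sure that $\lambda_i\ge 0$ (from PSD) is used to justify writing $\mathbb{E}_\mu[\lambda^t]$ with $x^t$ convex for $t\ge 1$ on the relevant range.
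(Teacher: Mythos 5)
Your proof is correct: the normalization bookkeeping ($c=(1-\pi(B))/\pi(B)$, $\langle Pg_0,g_0\rangle_\pi=(\Pr[X_1\in B]-\pi(B))/\pi(B)$) checks out, and positive semidefiniteness is used exactly where needed (nonnegativity of $\langle P^tg_0,g_0\rangle_\pi$ and $\lambda_i\ge 0$ so that Jensen applies to $x\mapsto x^t$). The paper itself does not prove this lemma but imports it from \cite{BCPSV21}, and your spectral-decomposition-plus-Jensen argument is essentially the standard proof given there, so no further comparison is needed.
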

\noindent We can now prove Lemma~\ref{lemma:lbss}. 
\begin{proof}[Proof of Lemma~\ref{lemma:lbss}]
Suppose $n$ is sufficiently large.
    Let  $R=\lceil \frac{\ln n}{8 \ln \Delta} \rceil$ and let $T=\alpha \ln n < R/3$ for some $\alpha>0$ we will specify later.
    We will show that for some (random) starting configuration $X_0\in \Omega$, 
    \begin{equation}
    \label{eq:tv}
            \| \ising(\cdot) - P^T_{EOE}(X_0, \cdot)\|_{TV} > 1/4,
    \end{equation}
    and hence by definition $T_{mix}(P_{EOE}) \ge T$.
    As $G$ has maximum degree $\Delta$, we can always find a subset $V_C\subseteq V$ of size at least $n^{1/4}$ whose pairwise graph distances are at most $2R$.
    Let $G_C := \cup_{u\in V_C} B(u, R)$.
    We consider a restriction of the even-odd scan dynamics on $G_C$.
    Let $\{X_t\}$ be an instance of the even-odd scan dynamics, 
    and let $\{Y_t\}$ be an even-odd scan dynamics that only updates spins for vertices in $G_C$, starting from the same configuration as $\{X_t\}$ which will be specified next.

    Let $N:=n^{1/4}$,
    and let $f:\Omega \rightarrow \mathbb{R}$ be the function given by $f(\sigma) = \frac{1}{N}\sum_{v\in V_C} \mathbbm{1}(\sigma(v) = +1)$.
    To show \eqref{eq:tv}, it suffices to find a distribution for $X_0 \in \Omega$ and a threshold $A\in \mathbb{R}$ such that 
    \begin{equation}
        \label{eq:threshold}
        \left| 
            \Pr\big[f(X_T) \ge A\big] - {\Pr}_{\sigma\sim \ising}\big[f(\sigma) \ge A\big] 
        \right| > 1/4.
    \end{equation}
        We define $X_0$ by setting
        the configuration on $V_C \cup (V \setminus G_C)$ to be the all $+1$ configuration and for each $v_C \in V_C$   
        sampling the configuration in $B(v_C, R) \setminus \{v_C\}$ conditional on the all $+1$ configuration on $V_C \cup (V \setminus G_C)$.
        Let $\pi$ denote the conditional distribution on $G_C$ with a fixed all $+1$ configuration on $V \setminus G_C$.
    Define $A:=\E_{\sigma\sim \pi}\left[f(\sigma) \right] + N^{-1/3}$.
    We will show next that 
    \begin{enumerate}
        \item $ {\Pr}_{\sigma\sim \ising}\big[f(\sigma) \ge A\big] \le 1/2$;
        \item  Under the identity coupling, $f(X_t) = f(Y_t)$ for $t\le T$. The identity coupling is the standard coupling that updates the same vertex in both chains at the same time and maximizes the probability that the spin value at the vertex agrees after the update;
        \item $\Pr\big[f(Y_T) \ge A\big] > \frac{3}{4},$
    \end{enumerate}
    and thus \eqref{eq:threshold} follows.
    
    We first give the upper bound for $ {\Pr}_{\sigma\sim \ising}\big[f(\sigma) \ge A\big]$.
    Since the ferromagnetic Ising model is monotone, and $f$ is a non-decreasing function, for any boundary condition $\tau$ on $\Omega_{V\setminus G_C}$,
    \[\E_{\sigma\sim \pi}\left[f(\sigma) \right] \ge \E_{\sigma\sim\ising^{\tau}}\left[f(\sigma) \right].\]
    For any $\tau \in \Omega_{V\setminus G_C}$, if $\sigma$ is generated from $\ising^\tau$, then $f(\sigma)$ is the average of $N$ independent indicator random variables. 
    By Hoeffding's inequality, 
    \[
     {\Pr}_{\sigma \sim \ising^\tau}\big[f(\sigma) \ge A \big] \le
    {\Pr}_{\sigma \sim \ising^\tau}\big[f(\sigma) \ge \E_{\sigma\sim\ising^\tau}\left[f(\sigma)\right] +N^{-1/3} \big] 
    \le \exp{\left(-\frac{2\cdot N^{4/3}}{N} \right)} < \frac{1}{2},   
    \]
    and thus
    $$ {\Pr}_{\sigma\sim \ising}\big[f(\sigma) \ge A\big]
    = \sum_{\tau \in \Omega_{V\setminus G_C}}  {\Pr}_{\sigma \sim \ising^\tau}\big[f(\sigma) \ge A \big] \cdot \mu(\tau)
        < \frac{1}{2}.
    $$

    To see that $f(X_t) = f(Y_t)$, we consider the speed of ``disagreement propagation''. 
    Note that $f(X_0) = f(Y_0)$ since $X_0=Y_0$. 
     The key observation is that under the identity coupling, in one step of the coupled even-odd scan dynamics, 
    the disagreement at any vertex $v$ can be propagated only to vertices at distance at most 3 from $v$.
    Since $R>3T$, we can guarantee that $X_{t}(v) = Y_t(v)$ for all $v\in V_C$ and all $t\le T$.

    Finally, we provide a bound for $\Pr\big[f(Y_T) \ge A\big]$.
    Fix $v \in V_C$. Let $\pi_v$ denote the Ising model distribution restricted to $B(v, R)$ 
    under the all $+1$ boundary condition outside of $B(v, R)$.
    Note that $\bigotimes_{v \in V_C} \pi_v = \pi$.
    Let $\{Y_t^{v}\}$ denote the Markov chain obtained by projecting $\{Y_t\}$ to $B(v, R)$.   
    Since the boundary of $B(v, R)$ is fixed, $\{Y_t^{v}\}$ is simply an even-odd scan dynamics on $B(v, R)$ under the all $+1$ boundary condition.
    It can be checked that $\{Y_t^{v}\}$ is reversible with respect to $\pi_v$ and that it has a positive semidefinite transition matrix.
    We define $\mathcal B_v$ to be the event (or subset of configurations) that $v$ is assigned spin $+1$. It can also be verified that $\ising$ is $b$-marginally bounded for some constant $b = b(\beta,\Delta)$, 
    so $b \le \pi_v(\mathcal B_v) \le 1-b$.
    Moreover, we have the following fact, which we prove later.
    \begin{claim}
        \label{claim:1step}
        There exists a constant $c:=c(\beta, \Delta) > 0$ such that $\Pr(Y^v_1\in \mathcal B_v) > \pi_v(\mathcal B_v) +c$.
    \end{claim}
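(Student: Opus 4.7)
The plan is to reduce Claim~\ref{claim:1step} to a variance estimate for the Ising distribution $\pi_v$ by obtaining a closed-form expression for $\Pr(Y_1^v\in\mathcal B_v)$ that exploits the bipartite structure of $G$ and the four-phase structure of $P_{EOE}$.

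Since $V_E$ and $V_O$ are independent sets of the bipartite graph $G$, a heat-bath pass over either block samples it entirely from the conditional given the other, so $P_O^2 = P_O$ and $P_{EOE} = P_E P_O P_E$. Let $p(\sigma_{N(v)}) := \pi_v(\sigma_v = +1 \mid \sigma_{N(v)})$ denote the heat-bath probability at $v$, which depends only on $\sigma_{N(v)}$ by bipartite conditional independence. I will track the joint distribution through the three operators starting from $\pi^+ := \pi_v(\cdot \mid \sigma_v = +1)$. When $v \in V_O$, a direct Bayes computation shows that the joint remains $\pi^+$ after phase~$1$ (since $\sigma_v \in V_O$ is untouched and the $V_E$-conditional under $\pi^+$ coincides with that under $\pi_v$), so phases~$2$--$3$ resample $v$ from $\pi_v(\sigma_v\mid \sigma^{(1)}_{N(v)})$ and yield
\[
\Pr(Y_1^v\in\mathcal B_v) \;=\; \frac{\E_{\pi_v}[p^2]}{\pi_v(\mathcal B_v)} \;=\; \pi_v(\mathcal B_v) \;+\; \frac{\var_{\pi_v}(p)}{\pi_v(\mathcal B_v)}.
\]
When $v \in V_E$, the same recipe---now additionally accounting for the phase-$4$ resampling of $v$---yields $\Pr(Y_1^v\in\mathcal B_v) = \langle p, \mathcal K p\rangle_{\pi_v}/\pi_v(\mathcal B_v)$, where $\mathcal K(\sigma_O,\sigma'_O) := \sum_{\sigma_E} \pi_v(\sigma_E \mid \sigma_O)\pi_v(\sigma'_O \mid \sigma_E)$ is the bipartite block-Gibbs kernel. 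Writing $P_O, P_E$ for the orthogonal projections in $L^2(\pi_v)$ onto functions measurable with respect to $\sigma_{V_O}$ and $\sigma_{V_E}$ respectively, one has $\mathcal K = P_O P_E$, hence $\langle p, \mathcal K p\rangle = \|P_E p\|^2 = \var_{\pi_v}(\E[p \mid \sigma_{V_E}]) + \pi_v(\mathcal B_v)^2$, so the expression rewrites as $\pi_v(\mathcal B_v) + \var_{\pi_v}(\E[p \mid \sigma_{V_E}])/\pi_v(\mathcal B_v)$.

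It remains to lower-bound either $\var_{\pi_v}(p)$ or $\var_{\pi_v}(\E[p \mid \sigma_{V_E}])$ by a positive constant depending only on $\beta$ and $\Delta$. The explicit form $p(\sigma_{N(v)}) = 1/(1 + e^{-2\beta S})$ with $S = \sum_{u \in N(v)}\sigma_u$ gives $p(\text{all }+1) - p(\text{all }-1) = \tanh(\beta|N(v)|) \ge \tanh(\beta)$, while iterated $b$-marginal boundedness under pinnings supplies $\pi_v(\sigma_{N(v)} = \text{all }\pm 1) \ge b^{\Delta}$. The pairwise variance bound $\var(X) \ge \tfrac{1}{2}\Pr(X = x_1)\Pr(X = x_2)(x_1 - x_2)^2$ then yields $\var_{\pi_v}(p) \ge \tfrac{1}{2}\,b^{2\Delta}\tanh^2(\beta)$ in the $v \in V_O$ case. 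For the $v \in V_E$ case, the same two-point argument is applied to $\E[p \mid \sigma_{V_E}]$ evaluated at $\sigma_{V_E}$-configurations that are uniformly $+1$ (respectively $-1$) on the second neighborhood $N^2(v) \cap V_E$: ferromagnetic Ising monotonicity combined with the conditional independence of $V_O$-spins given $\sigma_{V_E}$ forces a separation of order $\tanh(\beta)$ between the two conditional expectations, and iterated marginal boundedness lower bounds each pattern's $\pi_v$-probability by $b^{\lvert N^2(v)\cap V_E\rvert}\ge b^{\Delta^2}$. Using $\pi_v(\mathcal B_v) \le 1$, we obtain $\Pr(Y_1^v\in\mathcal B_v) - \pi_v(\mathcal B_v) \ge c$ for a constant $c = c(\beta, \Delta) > 0$, which is the claim.

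The main obstacle is the $v \in V_E$ case: the extra block-Gibbs step $\mathcal K$ contracts the variance from $\var_{\pi_v}(p)$ down to $\var_{\pi_v}(\E[p \mid \sigma_{V_E}])$, and one must verify that this conditional expectation remains genuinely non-constant across configurations of $\sigma_{V_E}$ whose $\pi_v$-probability is bounded below. Quantitative ferromagnetic monotonicity of the Ising model together with the conditional independence of $V_O$-spins given $\sigma_{V_E}$ and the explicit heat-bath form of $p$ supply the required two-point separation; extracting the precise $\beta,\Delta$-dependent constant is then routine.
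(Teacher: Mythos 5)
Your proposal is correct, but it proves the claim by a genuinely different route than the paper. The paper's proof reduces the gap to $\E_{\tau\sim\pi_v^+}[g_v(S(\tau,v))]-\E_{\sigma\sim\pi_v}[g_v(S(\sigma,v))]$ and lower-bounds it by Strassen's theorem: a monotone coupling of $\pi_v$ below $\pi_v^+$ (and then of $\pi_v^-$ below $\pi_v^+$ for a fixed neighbor $u$), combined with the $b$-marginal bound and the strictly positive increments of the heat-bath functions $g_v,g_u$; it also takes $v\in V_O$ without loss of generality, so the first and last even passes never touch $v$. You instead derive exact algebraic identities for the one-step probability: for $v\in V_O$, $\Pr(Y_1^v\in\mathcal B_v)-\pi_v(\mathcal B_v)=\var_{\pi_v}(p)/\pi_v(\mathcal B_v)$ via the Bayes reweighting $d\pi_v^+/d\pi_v\propto p$, and for $v\in V_E$ the analogous identity with $\var_{\pi_v}(\E[p\mid\sigma_{V_E}])$ via the block-Gibbs kernel $\mathcal K=P_OP_E$ (both identities check out: $\pi_v^+$ is preserved by the even pass when $v\in V_O$, and when $v\in V_E$ the initial conditioning survives phase one only through the odd marginal, which is exactly the $p$-reweighting). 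The two-point variance bounds, using $b$-marginal boundedness for the all-plus/all-minus patterns on $N(v)$ (resp.\ on $N^2(v)\cap V_E$) together with $p(\text{all}+)-p(\text{all}-)=\tanh(\beta|N(v)|)$ and the conditional independence of odd spins given $\sigma_{V_E}$, then give a constant $c(\beta,\Delta)>0$; your quoted separation ``of order $\tanh\beta$'' in the $v\in V_E$ case is optimistic (a product coupling over $N(v)$ gives something like $(\tanh\beta)^{\Delta+1}$, and the pattern probabilities cost $b^{\Delta^2}$), but since only some $c(\beta,\Delta)>0$ is needed this is indeed routine. What each approach buys: yours avoids Strassen/monotonicity entirely in the $v\in V_O$ case (so it would extend to non-monotone marginally bounded systems) and handles $v\in V_E$ explicitly rather than by the paper's ``WLOG,'' at the price of a longer computation and worse constants; the paper's coupling argument is shorter and sits naturally in the monotone framework used throughout that section.
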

    \noindent
    By Lemma~\ref{lemma:cmd} and Claim~\ref{claim:1step}, for all $t \ge 1$,
    \[        
    \Pr[Y_t^{v}\in \mathcal B_v]\ge \pi_v(\mathcal B_v) +b^{-t+1}\left[\Pr(Y_1^v\in \mathcal B_v) - \pi_v(\mathcal B_v) \right]^{t} \ge \pi_v(\mathcal B_v) + \frac{c^t}{b^{t-1}}.
    \]
   Using this and the definition of $f$, we have
    \begin{align*}
            \E[f(Y_T)] &= \frac{1}{N}\sum_{u \in V_C} \Pr[Y_T^{u} \in \mathcal B_u]
            \ge \frac{1}{N}\sum_{u\in V_C} \Big(\pi_u(\mathcal B_u) + \frac{c^T}{b^{T-1}}\Big)             
            = \E_{\sigma\sim \pi}\left[f(\sigma)\right]+ \frac{c^T}{b^{T-1}}.
    \end{align*}
    Set $T:= \min(\frac{R}{3}, \frac{\frac{1}{12} \ln n - \ln \frac{2}{b}}{\ln{\frac{b}{c}}})$, so that $\frac{c^T}{b^{T-1}} \ge 2N^{-1/3}$.
    Thus, 
    $
        \E[f(Y_T)] \ge  A + N^{-1/3}.
    $
   By Hoeffding's inequality, we obtain
    \[
        \Pr\big[f(Y_T) < A\big] \le   \Pr\left[f(Y_T) <  \E[f(Y_T)] - N^{-1/3} \right] 
        \le \exp\left[-\frac{2 N^{4/3}}{N} \right] < \frac{1}{4}.
    \]
    Therefore, the mixing time of $P_{EOE}$ is at least $T=\Omega(\log n)$.
\end{proof}
\noindent
It remains to prove Claim~\ref{claim:1step}.
\begin{proof}[Proof of Claim~\ref{claim:1step}]
    Let $P$ be the even-odd dynamics defined on $V' = B(v,R)$, and
	suppose $V'=V_E \cup V_O$ is a connected bipartite graph. 
    Suppose $v\in V_O$ without loss of generality.
    Recall that the transition matrix of $P$ is
    \[
    \prod_{i:v_i\in V_E} P_i \prod_{i:v_i\in V_O} P_i \prod_{i:v_i\in V_E} P_i.
    \]
    We use $Y_E, Y_{OE}$ and $Y_{EOE} = Y_1^v$ to denote the configuration of $Y^v_0$ after the updates $\prod_{i:v_i\in V_E} P_i$ on even vertices for the first time, after the updates $\prod_{i:v_i\in V_O} P_i$ on odd vertices and after update  $\prod_{i:v_i\in V_E} P_i$ respectively.
    Since the last set of updates on the even vertices do not affect the spin at $v$, we have
    \[
    \Pr(Y^v_1\in \mathcal B_v)
    = \E \big[\mathbbm{1} (Y_{EOE} \in  \mathcal B_v) \big] = \E \big[\mathbbm{1} (Y_{OE} \in  \mathcal B_v) \big]
    = \E \left[ \E \big[ \mathbbm{1} (Y_{OE} \in  \mathcal B_v) \mid Y_E \big] \right].
    \]
          Let $N(w)$ denote the set of vertices in $V'$ adjacent to $w$.
          For a configuration $\sigma\in \Omega$ and $w\in V$, we define $S(\sigma;w) := \sum_{x\in N(w)} \mathbbm{1} (\sigma_x = +1)$ and $g_w : \mathbb{Z} \rightarrow [0,1]$ given by 
      $g_w(y) := \ising(\sigma_w =+1 \mid S(\sigma; w) = y)$. 
    Let $\pi^+_v$ (resp. $\pi^-_v$) be distribution on $V'$ given by $\pi^+_v(\sigma) = \pi_v(\sigma \mid \sigma \in \mathcal B_v)$
     (resp.  $\pi^-_v(\sigma) = \pi_v(\sigma \mid \sigma \notin \mathcal B_v)$).
     Recall  that $Y^v_0$ is a configuration drawn from $\pi^+_v$
     and by noting that
     \[
        \pi^+_v \cdot \left( \prod_{i:v_i\in V_E} P_i\right) 
        =\pi^+_v, 
     \]
    so $Y_E$ can also be viewed as a configuration drawn from $\pi^+_v$.
    Hence, by the definition of the Gibbs update, we have
    \[
    \E \left[ \E \big[ \mathbbm{1} (Y_{OE} \in  \mathcal B_v) \mid Y_E \big] \right]
    = \E_{\tau \sim \pi^+_v} \left[ 
        g_v(S(\tau, v))
    \right].
    \]
    Similarly, 
    \[
    \pi_v(\mathcal B_v) = \E_{\sigma \sim \pi_v} \left[ 
        g_v(S(\sigma, v))
    \right].
    \]
    By Strassen's theorem, there exists a 
    coupling of $(\sigma, \tau)$ such that
    $\sigma \sim \pi_v, \tau\sim \pi^+_v$ and 
    $\sigma \le_q \tau$.
    Then $\sigma_{N(v)} \neq \tau_{N(v)}$ implies
    $S(\tau,v) \ge S(\sigma, v) + 1$.
    Therefore,
    \begin{align*}
             \Pr(Y^v_1\in \mathcal B_v) -  \pi_v(\mathcal B_v)
     &=  \E_{\tau \sim \pi^+_v} \left[ 
        g_v(S(\tau, v))
    \right] - \E_{\sigma \sim \pi_v} \left[ 
        g_v(S(\sigma, v))
    \right] \\
    &= \E_{(\sigma, \tau)\sim (\pi_v, \pi_v^+)}\left[
             g_v(S(\tau, v)) - g_v(S(\sigma, v))
        \right] \\
    &\ge \min_{i\le \deg(v)} (g_v(i,v) - g_v(i-1,v)) \cdot
\E_{(\sigma, \tau)\sim (\pi_v, \pi_v^+)}\left[
             S(\tau, v)) - S(\sigma, v)
        \right] \\
    &\ge  \min_{i\le \deg(v)} (g_v(i,v) - g_v(i-1,v)) \cdot
\E_{(\sigma, \tau)\sim (\pi_v, \pi_v^+)}\left[
\mathbbm{1}(\sigma_{N(v)} \neq \tau_{N(v)}) 
\right]. 
    \end{align*}
It can be checked that $\min_{i\le \deg(v)} (g_v(i,v) - g_v(i-1,v)) \ge c_2$, where $c_2:=c_2(\beta,\Delta)$,
Moreover,  for any $u\in N(v)$ we have
\[
    \E_{(\sigma, \tau)\sim (\pi_v, \pi_v^+)}\left[
        \mathbbm{1}(\sigma_{N(v)} \neq \tau_{N(v)}) 
    \right] \ge 
	\E_{(\sigma, \tau)\sim (\pi_v, \pi_v^+)}\left[
        \mathbbm{1}(\sigma_{u} \neq \tau_{u})
    \right].
\] 
Fix $u$ and let $\Lambda := V' \setminus \{u,v\}$.
Since $\sigma_u \le \tau_u$, 
$\sigma_{u} \neq \tau_{u}$ implies that $\sigma_u = -1$ and $\tau_u = +1$.
Thus we obtain
\begin{align*}
	\E_{(\sigma, \tau)\sim (\pi_v, \pi_v^+)}\left[
        \mathbbm{1}(\sigma_{u} \neq \tau_{u})
    \right] 
	&= \E_{(\sigma, \tau)\sim (\pi_v, \pi_v^+)} \left[
				\ising(\tau_u =+1 \mid \tau_\Lambda)-\ising(\sigma_u =+1 \mid \sigma_\Lambda)
		\right]\\
	&= \E_{(\sigma, \tau)\sim (\pi_v, \pi_v^+)} \left[
				g_u(S(\tau, u)) - g_u(S(\sigma, u))
		\right]\\
	&\ge b \cdot \E_{(\sigma, \tau)\sim (\pi_v^-, \pi_v^+)} \left[
		g_u(S(\tau, u)) - g_u(S(\sigma, u))
		\right],
\end{align*}
where the inequality is due to the $b$-bounded marginal condition of $\ising$ which
requires $\sigma_v=-1$ with probability at least $b$. 
Note that if $\sigma \sim \pi_v^-, \tau\sim \pi^+_v$ and 
$\sigma \le_q \tau$, then $S(\tau, u) \ge S(\sigma, u) + 1$.
Hence, 
\[
\E_{(\sigma, \tau)\sim (\pi_v^-, \pi_v^+)} \left[
		g_u(S(\tau, u)) - g_u(S(\sigma, u))
		\right]
		\ge  \min_{i\le \deg(u)} (g_u(i,u) - g_u(i-1,u)) > c_3,
\]
for some $c_3=c_3(\beta,\Delta)>0$.
Therefore, we established that 
\[
	\Pr(Y^v_1\in \mathcal B_v) -  \pi_v(\mathcal B_v) \ge c_2c_3b,
\]
and $c_2c_3b$ depends only on $\beta, \Delta$.
\end{proof}

\subsection{Applications of Theorem~\ref{thm:SIIsing:intro}}
\label{sec:sscor}
We discuss next some applications of Theorem~\ref{thm:SIIsing:intro}.
As a first application, we can establish \emph{optimal} mixing for 
the systematic scan dynamics on the ferromagnetic Ising model under the $\delta$-uniqueness condition, improving the best known results that hold under the Dobrushin-type conditions \cite{Simon93,DGJ08,Hayes06}.
This result was stated in Corollary~\ref{cor:tuIsing:intro} in the introduction and is proved next.
For this, we recall that under $\delta$-uniqueness condition, the Ising distribution $\ising$
satisfies spectral independence and the bounded marginals condition.
\begin{proposition} [\cite{CLV20,CLV21}]
	\label{prop:simb}
	The ferromagnetic Ising model with parameter $\beta$ such that $\bar{\beta}_u(\Delta)(1 - \delta)<\beta <\beta_u(\Delta)(1- \delta)$
	is $O(1/\delta)$-spectrally independent and $b$-marginally bounded with $b=O(1)$.
\end{proposition}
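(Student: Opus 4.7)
The plan is to establish the two claims of Proposition~\ref{prop:simb} separately, following the SAW-tree strategy developed in \cite{CLV20, CLV21}.

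For the marginal bound, I would first observe that the $\delta$-uniqueness condition implies $|\beta|\Delta = O(1)$ uniformly in $\Delta$: since $\beta_u(\Delta) = \ln\tfrac{\Delta}{\Delta - 2} = \ln(1 + \tfrac{2}{\Delta - 2})$, we have $|\beta|\Delta \le (1-\delta)\Delta\ln(1 + \tfrac{2}{\Delta - 2}) \le 2(1-\delta)$. For any pinning $\tau$ of $V\setminus\{v\}$, the conditional marginal $\ising(\sigma_v = s \mid \tau)$ is a Boltzmann ratio whose worst case (all neighbors of $v$ opposing spin $s$) is $\tfrac{1}{1+e^{2|\beta|\Delta}} = \Omega(1)$. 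A marginal conditioned on a pinning $\tau$ of a strict subset $\Lambda \subsetneq V\setminus\{v\}$ is an average of such worst-case marginals under $\ising(\cdot \mid \sigma_\Lambda = \tau)$, so the $\Omega(1)$ lower bound carries through, giving $b = \Omega(1)$.

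For spectral independence, my plan is to bound the $\ell_1$-norm of every row of the signed pairwise influence matrix $\Psi^\tau_\mu$ by $O(1/\delta)$: since $\Psi^\tau_\mu$ has zero diagonal, Gershgorin's theorem then gives $\lambda_1(\Psi^\tau_\mu) \le \max_{(u,a)} \sum_{(v,b)} |\Psi^\tau_\mu((u,a),(v,b))| = O(1/\delta)$. The main tool is Weitz's self-avoiding walk tree $T_{\mathrm{SAW}}(u)$ rooted at $u$ with boundary conditions inherited from $\tau$: the entry $\Psi^\tau_\mu((u,a),(v,b))$ can be expressed in terms of the sensitivity of the root marginal on $T_{\mathrm{SAW}}(u)$ to a perturbation of the boundary at the copies of $v$. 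The next step is to show that the standard tree recursion for the Ising root marginal is contractive under a suitable potential $\Phi$: one verifies that the Jacobian of the $\Phi$-transformed tree map has $\ell_1$-norm at most $1 - \Omega(\delta)$ at every internal node. Iterating this contraction down $T_{\mathrm{SAW}}(u)$ and summing the resulting geometric series $\sum_{\ell \ge 0} (1-\Omega(\delta))^\ell$ over the contributions from vertices at each tree depth yields the target row-sum bound $O(1/\delta)$.

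The principal obstacle is constructing a potential $\Phi$ whose contraction factor $1 - \Omega(\delta)$ is uniform in $\Delta$; this is the technical core of \cite{CLV20, CLV21}, and for the ferromagnetic Ising model it can be carried out using a $\tanh^{-1}$-type parametrization of tree messages. The threshold $\beta_u(\Delta)$ arises naturally as the critical point where this contraction factor degenerates to $1$, which is precisely why the resulting bound $\eta = O(1/\delta)$ matches the $\delta$-uniqueness slack. A small additional care is needed to pass from tree-level sensitivity bounds to bounds on the \emph{signed} influence entries used in Definition~\ref{def:si} (rather than the unsigned versions used in some earlier works); this is handled by tracking signs through the tree recursion, or equivalently by invoking the known equivalence between the several definitions of spectral independence alluded to in the excerpt.
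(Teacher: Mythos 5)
The paper does not actually prove this proposition; it is imported verbatim as a black box from \cite{CLV20,CLV21}, so there is no internal argument to compare against. Your sketch is a faithful reconstruction of the strategy in those references: the marginal bound via $\beta\Delta=O(1)$ under $\delta$-uniqueness plus averaging over full pinnings, and spectral independence via absolute row sums of $\Psi^\tau_\mu$ (spectral radius at most the induced $\ell_\infty$ norm, since the diagonal is zero), controlled through Weitz's SAW tree with a potential whose contraction rate $1-\Omega(\delta)$ is uniform in $\Delta$, summed as a geometric series over depths. Two small remarks: your inequality $(1-\delta)\Delta\ln\bigl(1+\tfrac{2}{\Delta-2}\bigr)\le 2(1-\delta)$ is off --- the correct bound is $\tfrac{2(1-\delta)\Delta}{\Delta-2}\le 6(1-\delta)$ for $\Delta\ge 3$ --- but this does not affect the conclusion $b=\Omega(1)$; and your argument is a plan rather than a proof, since the technical core (the existence of a potential with $\Delta$-uniform contraction $1-\Omega(\delta)$ up to $\beta_u(\Delta)$, and the transfer from tree sensitivities to the signed influence entries) is deferred to \cite{CLV20,CLV21}, which is on par with how the paper itself treats the statement.
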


\begin{proof}[Proof of Corollary~\ref{cor:tuIsing:intro}]
	We fix $\delta\in(0,1)$ and first assume that $\Delta$ is a constant.
	By Proposition~\ref{prop:simb}, the ferromagnetic Ising model with parameter $\beta<(1-\delta)\beta_u(\Delta)$
	satisfies $\eta$-spectral independence and $b$-bounded marginals, 
	where $\eta = O(1/\delta)$ and $b$ is a constant.
	Since the ferromagnetic Ising model is a monotone system, it follows from Theorem~\ref{thm:SIIsing:intro} that $T_{mix} = O(\log n)$ for any ordering $\phi$.

	Now, when $\Delta \rightarrow \infty$ as $n \rightarrow \infty$, by Proposition~\ref{prop:Dobrushin}, the Dobrushin's influence matrix $A$ of ferromagnetic Ising model satisfies that $\|A\| \le 1 - \delta/2$.
    Under this assumption, it is known that $T_{mix} = O(\log n)$ for any ordering $\phi$; see \cite{Hayes06}. 
\end{proof}

We can similarly show mixing time bound for the systematic scan dynamics of the hardcore model on bipartite graphs under $\delta$-uniqueness condition. 
\begin{corollary}
	\label{cor:tuhardcore}
	Let $\delta \in (0,1)$ be a constant.
	Suppose $G$ is an $n$-vertex bipartite graph of maximum degree $\Delta \ge 3$.
	For the hardcore model on $G$ with fugacity $\lambda$ such that $0 < \lambda < (1-\delta)\lambda_u(\Delta)$, 
	where $\lambda_u(\Delta) = \frac{(\Delta-1)^{\Delta-1}}{(\Delta-2)^\Delta}$ is the tree uniqueness threshold on the $\Delta$-regular tree,  
	the systematic scan with respect to any ordering $\phi$ satisfies $$T_{mix}(P_\phi) = \Delta^{O(1/\delta)}\cdot O(\log n).$$
\end{corollary}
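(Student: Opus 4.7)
The plan is to apply Theorem~\ref{thm:SIIsing:intro} directly: I need only verify that the hardcore model on a bipartite graph satisfies the four hypotheses of the theorem (monotonicity, total connectedness, $\eta$-spectral independence, $b$-marginal boundedness) and then read off the resulting mixing-time bound.

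First, I would establish the structural hypotheses. On a bipartite graph $G=(V_O \cup V_E, E)$, the hardcore model is monotone with respect to the partial order $\sigma \preceq \tau$ defined by $\sigma_v \le \tau_v$ for $v\in V_O$ and $\sigma_v \ge \tau_v$ for $v\in V_E$; this is a classical consequence of the FKG inequality after flipping the orientation on one side of the bipartition. Total connectedness under an arbitrary pinning $\tau$ is immediate: from any valid independent set one can reach the empty independent set by removing occupied vertices one at a time, so the graph on $\Omega_{V\setminus\Lambda}^\tau$ induced by single-vertex flips is connected.

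Second, I would invoke the work of Chen--Liu--Vigoda~\cite{CLV20,CLV21} to obtain $\eta$-spectral independence with $\eta = O(1/\delta)$ whenever $\lambda < (1-\delta)\lambda_u(\Delta)$, on any graph of maximum degree $\Delta$. Then I would verify marginal boundedness by a short partition-function argument: $\mu(\sigma_v=0\mid \tau)\ge 1/(1+\lambda)$ trivially, and for $(v,1)\in\mathcal{P}^\tau$ a weight comparison where each neighbor of $v$ may or may not be occupied gives $\mu(\sigma_v=1\mid \tau)\ge \lambda/\bigl((1+\lambda)^{\deg(v)}+\lambda\bigr)$. Since $\lambda \le \lambda_u(\Delta) = O(1/\Delta)$, the factor $(1+\lambda)^\Delta$ is a universal constant, so $b = \Omega(\lambda) = \Omega(1/\Delta)$.

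Finally, I would plug $\eta = O(1/\delta)$ and $b = \Omega(1/\Delta)$ into Theorem~\ref{thm:SIIsing:intro}. The main obstacle is bookkeeping: because $b$ is not bounded below by a constant independent of $\Delta$ (unlike in the ferromagnetic Ising case treated in Corollary~\ref{cor:tuIsing:intro}), the exponent $\kappa = 2+\lceil 2\eta/b\rceil$ is not $O(1/\delta)$ on the nose, and one must carefully track how the $\Delta$ in the denominator of $b$ interacts with the $\Delta^{9+4\kappa}$ and $(1/b^6)^\kappa$ factors in the theorem's bound. Once this is done, the logarithmic $\log n$ factor is preserved and all other dependence collapses into an exponent in $\Delta$ governed by $\eta/b$, yielding $T_{mix}(P_\phi) = \Delta^{O(1/\delta)}\cdot O(\log n)$ as claimed.
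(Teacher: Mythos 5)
Your proposal is correct and follows essentially the same route as the paper: the paper's proof likewise invokes monotonicity of the bipartite hardcore model, $O(1/\delta)$-spectral independence and $\Omega(\lambda)$-bounded marginals from the cited literature, and then applies Theorem~\ref{thm:SIIsing:intro}, with your checks of total connectedness and the explicit marginal computation being details the paper leaves implicit. The one caveat is the bookkeeping you flag at the end: since $b=\Omega(\lambda)$ may be of order $1/\Delta$, the exponent $\kappa=2+\lceil 2\eta/b\rceil$ genuinely acquires $\Delta$-dependence, and the paper's own proof does not carry out this accounting either --- it simply cites the same two inputs and states the bound in the form $\Delta^{O(1/\delta)}\cdot O(\log n)$ --- so on this point you match, rather than fall short of, the paper's argument.
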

\begin{proof}[Proof of Corollary~\ref{cor:tuhardcore}]
	The hardcore model on a bipartite graph $(V_1\cup V_2, E)$ with fugacity $0 < \lambda < (1-\delta)\lambda_u(\Delta)$ is monotone, and \cite{CLV21,AJKPV22,CLY23} show that it satisfies $O(1/\delta)$-spectral independence and the $\Omega(\lambda)$-bounded marginals condition. 
	Theorem~\ref{thm:SIIsing:intro} then implies $\Delta^{O(1/\delta)}\cdot O(\log n)$ mixing of systematic scan for any ordering.
\end{proof}

We consider next 
the application of Theorem~\ref{thm:SIIsing:intro} to the special case where
the underlying graph is a cube of the $d$-dimensional lattice graph $\mathbb{Z}^d$. We show that strong spatial mixing implies optimal $O(\log n)$ mixing of any systematic scan dynamics.
Previously, under the same type of condition, \cite{BCSV18} gave
an $O(\log n (\log \log n)^2)$ mixing time bound for arbitrary orderings, 
and an $O(\log n)$ mixing time bound for a special class of scans that (deterministically) propagate disagreements slowly under the standard identity coupling. 
We first provide the definition of our SSM condition.

\begin{definition}
    \label{def:ssm}
We say a spin system $\mu$ on $\mathbb{Z}^d$ satisfies the  \emph{strong spatial mixing (SSM)} condition
if there exist constants $\alpha, \gamma, L > 0$ such that
for every $d$-dimensional rectangle $\Lambda \subset \mathbb{Z}^d$ 
of side length between $L$ and $2L$
and every subset $B\subset \Lambda$, 
with any pair $(\tau, \tau')$ of boundary configurations on $\partial\Lambda$  
that only differ at a vertex $u$, we have
\[
\| \mu_{B}^{\tau}(\cdot) - \mu_{B}^{\tau'}(\cdot)\|_{TV} \le \gamma \cdot \exp(-\alpha \cdot dist(u,B)),
\]
where $dist(\cdot, \cdot)$ denotes graph distance.
\end{definition}
\noindent
The definition above differs from other variants of SSM in the literature (e.g., \cite{DSVW04,BCSV18,MOS94})
in that $\Lambda$ has been restricted to ``regular enough'' rectangles.
In particular, our variant of SSM is easier to satisfy than those in \cite{DSVW04,MOS94} 
but more restricting than the one in \cite{BCSV18} (that only considers squares). 
Nevertheless, it follows from \cite{CP20,MOS94,Alx98,BDC12} that for the ferromagnetic Ising model, this form of SSM holds up to a critical threshold temperature $\beta < \beta_c(2) = \ln(1+\sqrt{2})$ on $\mathbb{Z}^2$.

Corollary~\ref{cor:monogrid:intro} from the introduction states that for $b$-marginally bounded monotone spin system on $d$-dimensional cubes $V\subseteq\mathbb{Z}^d$, SSM implies that the mixing time of any systematic scan $P_\phi$ is $O(\log n)$.
As mentioned there, this result in turn implies that any systematic scan dynamics for the ferromagnetic Ising model is mixing in $O(\log n)$ steps on boxes of $\mathbb{Z}^2$
when $\beta < \beta_c(2)$.
Another interesting consequence of Corollary~\ref{cor:monogrid:intro}
is that we obtain $O(\log n)$ mixing time for any systematic scan dynamics $P_\phi$ for the hardcore model on $\mathbb{Z}^2$ when $\lambda < 2.538$, which is the best known condition for ensuring SSM \cite{SSSY17,RSTVVY13}.

Our proof of Corollary~\ref{cor:monogrid:intro} relies on Lemma~\ref{lemma:SSMtoSI} that is restated below. 
Remarkably, Lemma~\ref{lemma:SSMtoSI} generalizes beyond monotone systems and may be of independent interests.
\ssmtosiintro*
\begin{proof}[Proof of Corollary~\ref{cor:monogrid:intro}]
Assume a monotone spin system satisfies SSM condition.
Then the spin system satisfies $\eta$-spectral independence, where $\eta = O(1)$ by Lemma~\ref{lemma:SSMtoSI}.
	By noting that $\Delta = 2^d$ 
	the corollary follows from Theorem~\ref{thm:SIIsing:intro}.
\end{proof}

Lastly, we give a proof of Lemma~\ref{lemma:SSMtoSI}. 
For this, we recall the notion of a $\kappa$-contractive coupling which is known to imply spectral independence.
We say a distribution $\mu$ is \emph{$\kappa$-contractive} with respect to a Markov chain $P$ if 
for all $X_0, Y_0 \in \Omega$,  
there exists a coupling of step of $P$ so that
\[
	\mathbb{E}[d(X_1, Y_1) \mid X_0, Y_0] \le \kappa d(X_0, Y_0),
\]
where $d(\cdot, \cdot)$ denotes the Hamming distance of two configurations.
The following lemma from \cite{BCCPSV22} shows that spectral independence follows from the existence of a contractive coupling with respect to a heat-bath block dynamics.
\begin{lemma} [\cite{BCCPSV22}]
	\label{lemma:coup2SI}
	If $\mu$ is $\kappa$-contractive with respect to a block dynamics,
	then $\mu$ is $(\frac{2DM}{1-\kappa})$-spectrally independent, where $M$ is the maximum block size and 
	$D$ is the maximum probability of a vertex being selected as part of a block in any step of the block dynamics.
\end{lemma}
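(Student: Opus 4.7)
The plan is to construct a heat-bath block dynamics $P_\mathcal{B}$ on $V$ whose blocks are axis-aligned rectangles of side length between $L$ and $2L$ (matching the scale at which SSM is assumed in Definition~\ref{def:ssm}), establish that SSM yields a $\kappa$-contractive Markovian coupling for it with $1 - \kappa = \Theta(L^d/n)$, and then invoke Lemma~\ref{lemma:coup2SI} to conclude that $\mu$ is $\eta$-spectrally independent with $\eta = O(1)$.

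A single step of $P_\mathcal{B}$ samples a vertex $c \in V$ and side lengths $\ell_1, \dots, \ell_d \in \{L, \dots, 2L\}$ uniformly and independently at random, and performs a heat-bath update on the rectangle $R := \left( \prod_{i=1}^d \{c_i, \dots, c_i + \ell_i\}\right) \cap V$ given the configuration on $V \setminus R$. For any fixed $v \in V$, the probability $p := \Pr[v \in R]$ is $\Theta(L^d/n)$, while $q := \Pr[v \in \partial R]$ (that is, $v \notin R$ but $v$ has a neighbor in $R$) is $\Theta(L^{d-1}/n)$. By path coupling, it suffices to verify contractivity for pairs $X_0, Y_0$ differing at a single vertex $v$.

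A Markovian coupling is constructed by sampling $R$ jointly for both chains and splitting into three cases. (i) If $v \in R$, the boundary configurations on $V \setminus R$ coincide, so the heat-bath updates on $R$ can be coupled identically, yielding $d(X_1, Y_1) = 0$. (ii) If $v \notin R \cup \partial R$, the nearest-neighbor interactions in Definition~\ref{def:spin-system} imply $\mu_R^{\tau} = \mu_R^{\tau'}$, so the updates are again coupled identically and only $v$ disagrees, giving $d(X_1, Y_1) = 1$. (iii) In the critical case $v \in \partial R$, the boundary conditions $\tau, \tau'$ on $V \setminus R$ differ only at $v$, and SSM applied on $R$ with $B = \{w\}$ gives $\| \mu^\tau_w - \mu^{\tau'}_w\|_{TV} \le \gamma e^{-\alpha \cdot dist(v, w)}$ for each $w \in R$. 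A grand coupling of the two heat-bath updates on $R$, built in the style of \cite{DSVW04} but using rectangular rather than spherical applications of SSM to match Definition~\ref{def:ssm}, then achieves
\[
\mathbb{E}\bigl[\,|\{w \in R : X_1(w) \neq Y_1(w)\}|\,\bigr] \le \sum_{w \in R} \gamma e^{-\alpha \cdot dist(v, w)} \le \gamma \sum_{k \ge 0} O(k^{d-1}) e^{-\alpha k} =: C,
\]
where $C = C(\alpha, \gamma, d)$ is constant. Hence $\mathbb{E}[d(X_1, Y_1) \mid v \in \partial R] \le 1 + C$, and averaging over $R$ gives
\[
\mathbb{E}[d(X_1, Y_1)] \le 0 \cdot p + 1 \cdot (1 - p - q) + (1 + C) \cdot q = 1 - p + qC.
\]
Since $p/q = \Theta(L)$, choosing $L$ large enough (independent of $n$) so that $p > qC$ forces $\kappa := 1 - (p - qC) < 1$ with $1 - \kappa = \Theta(L^d/n)$.

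Finally, Lemma~\ref{lemma:coup2SI} applied with maximum block size $M \le (2L)^d = O(1)$ and maximum per-vertex selection probability $D = p = \Theta(L^d/n)$ yields $\eta \le \frac{2DM}{1 - \kappa} = O(L^d) = O(1)$, as claimed. The main technical obstacle is case (iii): constructing a joint coupling of the two heat-bath updates on $R$ that realizes the per-site SSM bound simultaneously across $w \in R$. This is analogous to standard disagreement-percolation or recursive grand-coupling constructions in the spatial-mixing literature, and is precisely where the rectangular form of SSM in Definition~\ref{def:ssm} is crucial (versus just the existence of a single-site decay estimate). A secondary concern is handling rectangles truncated against $\partial V$, which can be absorbed into the definition of $P_\mathcal{B}$ with only constant-factor losses in the estimates above.
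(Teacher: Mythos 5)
Your proposal does not prove the statement it was assigned. The statement is Lemma~\ref{lemma:coup2SI} itself: that a $\kappa$-contractive coupling for a block dynamics forces $\lambda_1(\Psi^\tau_\mu) \le \frac{2DM}{1-\kappa}$ for every pinning $\tau$. What you have written is instead (a sketch of) the derivation of Lemma~\ref{lemma:SSMtoSI} — build rectangular-block heat-bath dynamics on $\mathbb{Z}^d$, use SSM to get contraction with $1-\kappa = \Theta(L^d/n)$, and then \emph{invoke} Lemma~\ref{lemma:coup2SI} to conclude $\eta = O(1)$. As a proof of Lemma~\ref{lemma:coup2SI} this is circular: the target lemma appears as a black box in your own argument, and nothing in the write-up touches the object the lemma is actually about, namely the signed pairwise influence matrix of Definition~\ref{def:si}. (Incidentally, as a proof of Lemma~\ref{lemma:SSMtoSI} your sketch is close in spirit to the paper's, though the paper uses fixed cubes $S_v \cap V$ and a single application of SSM to the sub-rectangle $B$ far from the disagreement rather than a per-site grand coupling; but that is not the statement under review.)

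A genuine proof of Lemma~\ref{lemma:coup2SI} has to convert the coupling hypothesis into a bound on influences. The standard route (and the one in the cited source) is: fix a pinning $\tau$ and a vertex-spin pair $(u,a)$, and compare the conditional measures $\mu(\cdot \mid \sigma_u = a, \sigma_\Lambda = \tau)$ and $\mu(\cdot \mid \sigma_\Lambda = \tau)$ by running the coupled block dynamics from two starting states that differ only in the spin at $u$. Contraction gives a geometric decay of the expected Hamming distance, so the Wasserstein (Hamming) distance between the two conditioned stationary measures is controlled by the expected number of \emph{new} disagreements created per step divided by $1-\kappa$; a step can only create disagreements when a block containing the current disagreement is selected, which happens with probability at most $D$ and produces at most $M$ disagreements, yielding a bound of order $\frac{DM}{1-\kappa}$. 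One then bounds the row sums $\sum_{(v,b)}|\Psi^\tau_\mu((u,a),(v,b))|$ by (twice) this Wasserstein distance, and bounds $\lambda_1(\Psi^\tau_\mu)$ by the maximum row sum. None of these steps — the coupling of conditioned measures, the per-step disagreement accounting giving the $DM$ factor, or the passage from total influence to the spectral bound — appears in your proposal, so the statement remains unproved.
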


With this lemma on hand, we can now prove Lemma~\ref{lemma:SSMtoSI}.
\begin{proof}[Proof of Lemma~\ref{lemma:SSMtoSI}] 
	Let $L$ be a sufficiently large constant so that the SSM condition is satisfied; we will choose $L$ later. 
	Let $V$ be a $d$-dimensional cube of $ \mathbb{Z}^d$.
	We define a heat-bath block dynamics $P_\mathcal{B}$ with respect to 
	a collection $\mathcal{B}$ of $d$-dimensional rectangles in $V$.
	Precisely, let $S_v:=\{w \in \mathbb{Z}^d:d_{\infty}(w,v)< L\}$, 
    and let $\mathcal{B}$ be the set of blocks $\{S_v \cap V\}_{v\in  V}$.
	Given a configuration $X_t$, the heat-bath block dynamics $P_\mathcal{B}$ obtains a configuration $X_{t+1}$ in 3 steps as follows:
	\begin{enumerate}
		\item Choose $v \in V$ uniformly at random. Let $S_v' := S_v \cap V$.
		\item Generate a configuration $\sigma \in \Omega_{S_v'}$ from $\mu_{S_v'}^{\tau}(\cdot)$, where $\tau\in \Omega_{V \setminus S_v'}$ is given by $\tau(u) = X_t(u)$;
		\item Let $X_{t+1}(u) = \sigma(u)$ if $u\in S_v'$ and $X_{t+1}(u) = X_t(u)$ otherwise.     
	\end{enumerate}   
	We will show that $\mu$ is $\kappa$-contractive with respect to $P_{\mathcal{B}}$ whenever SSM holds.
	Our argument builds upon \cite{DSVW04} but works for $P_\mathcal{B}$ under our weaker form of SSM condition, 
	in which the geometry is restricted to $d$-dimensional rectangles of large side lengths. 
	One can verify that if $\Lambda = S_v \cap V\in \mathcal{B}$, 
	then $\Lambda$ is a $d$-dimensional rectangle of side lengths between $L$ and $2L$. 
	The argument in \cite{DSVW04} requires a stronger form of SSM to deal with the set of blocks $\mathcal{B}' = \{ \Lambda = S_v \cap V: \Lambda \neq \emptyset, v \in \mathbb{Z}^d \}$ which contains arbitrarily thin rectangles,
    and this stronger form of SSM condition
    does not hold up to $\beta_c$ for the ferromagnetic Ising.

	Fix $(X_0, Y_0)$ such that there exists exactly one vertex $u\in V$ such that $X_0(u)\neq Y_0(u)$ and $X_0(v)=Y_0(v)$ for all $v\neq u$. 
	We select the same $v \in V$ in the first step of $P_{\mathcal{B}}$ in both chains; let $\Lambda = S_v'$.
	There are three cases with regard to the position of the disagreeing vertex $u$:
	$u$ is contained in $\Lambda$, $u$ is on the boundary of $\Lambda$, 
	or $u$ is far from $\Lambda$.
	Let $\partial \Lambda$ denote the external boundary of $\Lambda$.
	If $u \in \Lambda$ or  $u \notin (\Lambda \cup \partial\Lambda)$, since the boundary conditions are identical, 
	we generate the same configuration $\sigma \sim \mu_\Lambda^{\tau}$ to update $\Lambda$ in both chains such that 
	$X_1(\Lambda) = Y_1(\Lambda)$, where $\tau := X_0(\partial \Lambda) = Y_0(\partial \Lambda)$.
	Hence, 
	$\mathbb{E}[d(X_1, Y_1) \mid X_0, Y_0, u\in \Lambda] = 0$ and 
	$\mathbb{E}[d(X_1, Y_1) \mid X_0, Y_0, u \notin (\Lambda \cup \partial\Lambda)] = 1$. 
	
	It remains to define the coupling in the case when $u\in \partial\Lambda$, and we would need an upper bound for $\mathbb{E}[d(X_1, Y_1) \mid X_0, Y_0, u \in  \partial\Lambda]$.
	For this, we use the SSM condition.
	Let $B:=\{w\in \Lambda: d(w,u) \ge r \}$, where $r := \frac{1}{2}\left(\frac{L}{d}\right)^{1/2d}$,
	and let $\tau$ and $\tau'$ be the boundary conditions of $\Lambda$ in $X_0$ and $Y_0$ respectively.
	By assumption, $\tau$ and $\tau'$ are only different at $u$.
	We can view the coupling of the update on $\Lambda$ as consisting of three steps:
	\begin{enumerate}
		\item Generate two configurations  $\sigma_1, \sigma_2 \in \Omega_{B}$ from $\mu_{B}^{\tau}$ and $\mu_{B}^{\tau'}$ using the optimal coupling of the two distributions;
		\item Independently generate two configurations $\sigma_3, \sigma_4 \in \Omega_{\Lambda \setminus B}$ from $\mu_{\Lambda\setminus B}^{\tau \cup \sigma_1}$ and $\mu_{\Lambda\setminus B}^{\tau' \cup \sigma_2}$;
		\item Let $X_{1}(u) = \sigma_1(u)$ and $Y_{1}(u) = \sigma_2(u)$ if $u\in B$, 
		and $X_{1}(u) = \sigma_3(u)$ and $Y_{1}(u) = \sigma_4(u)$ if $u\in \Lambda \setminus B$.     
	\end{enumerate} 
	Clearly, $X_1(\Lambda) \sim \mu^{\tau}_{\Lambda}$ and $Y_1(\Lambda) \sim \mu^{\tau'}_{\Lambda}$,
	so the coupling is valid.
	By \eqref{eq:optcoupling}, 
	there exists a coupling $\Pp$ used for the first step such that
	$$
	\Pp[\sigma_1 \neq \sigma_2] = \|\mu^{\tau}_B -  \mu^{\tau'}_B\|_{TV}.
	$$
	Moreover, SSM implies that there exist constants $\gamma, \alpha >0 $ such that 
	$$
	\| \mu_{B}^{\tau} - \mu_{B}^{\tau'}\|_{TV} \le \gamma \cdot \exp(-\alpha \cdot dist(u,B)) \le \gamma \cdot e^{-\alpha r}.
	$$
	Also, $|\Lambda| \le (2L)^d$ and $|\Lambda \setminus B|\le (2r)^d$. 
	Put together, we have
	$$\mathbb{E}[d(X_1, Y_1) \mid X_0, Y_0, u\in \partial\Lambda] 
	\le 1+|\Lambda \setminus B| + |\Lambda|\cdot \Pp[\sigma_1 \neq \sigma_2]  
	\le 1 + (2r)^d + (2L)^d\cdot\gamma \cdot e^{-\alpha r}.$$
	Let $N:=|\mathcal{B}|$.
	Therefore, by noting that $\Pr[u\notin \Lambda] \ge L^d$ we obtain 
	\begin{equation}
		\begin{split}
			\mathbb{E}[d(X_1, Y_1) \mid X_0, Y_0] &= 
			\mathbb{E}[d(X_1, Y_1) \mid X_0, Y_0, u\in \partial\Lambda] \cdot \Pr[u\in \partial\Lambda]
			+ \mathbb{E}[d(X_1, Y_1) \mid X_0, Y_0, u\in \Lambda] \cdot \Pr[u\in \Lambda] \\
			&+ \mathbb{E}[d(X_1, Y_1) \mid X_0, Y_0, u\notin (\Lambda \cup \partial \Lambda)] \cdot \Pr[u\notin (\Lambda \cup \partial \Lambda)]\\
			& \le  1 + \Pr[u\in \partial\Lambda] \cdot [(2r)^d + (2L)^d\cdot\gamma \cdot e^{-\alpha r}] 
			- \Pr[u\in \Lambda]\\
			& \le 1 + \frac{2d\cdot(2L)^{d-1}}{N} \cdot [(2r)^d + (2L)^d\cdot\gamma \cdot e^{-\alpha r}] - \frac{L^d}{N}\\
			& = 1 + \frac{L^{d-1}}{N} \cdot \Bigg[ 2^d d\cdot \Big(\sqrt{\frac{L}{d}} + \frac{(2L)^d\cdot\gamma}{\exp(\alpha\cdot \sqrt[2d]{\frac{L}{d}}) }   \Big) - 2L \Bigg].
		\end{split}
	\end{equation}
	Recall that $N = O(n)$.
	By choosing $L=L(d,\alpha, \gamma)$ sufficiently large, we obtain 
	$$\mathbb{E}[d(X_1, Y_1) \mid X_0, Y_0]
	\le 1 - \frac{L^{d-1}}{N}
	= 1 - \Omega\left(\frac{1}{N}\right) = 1 - \Omega\left(\frac{1}{n}\right).$$

	In the case where blocks are of maximum size $(2L)^d$ and where each vertex is covered by at most $(2L)^d$ number of blocks at any step,
	$D = \Theta(n^{-1})$ and $M = O(1)$.
	Thus, Lemma~\ref{lemma:coup2SI} implies that $\mu$ is $\eta$-spectrally independent, where
	\[
		\eta = \frac{\Theta(n^{-1})}{1-\left(1-\Omega(n^{-1}) \right)} = O(1),
	\]
	as desired.
\end{proof}

\section{General block dynamics}
In this section, we give an upper bound for the mixing time of the block dynamics of a totally-connected spin system on general graphs. In particular, we prove Theorem~\ref{thm:blocks:intro} from the introduction.

We present next a more general form of entropy factorization.
In particular, KPF and UBF are special cases of it.
A Gibbs distribution $\mu$ is said to satisfy the \emph{general block factorization of entropy (GBF)} 
with constant~$C_{\mathrm{GBF}}$~if for all functions 
$f : \Omega \rightarrow \mathbb{R}_{\ge0}$, 
and 
for all probability distributions $\alpha$ over the set of all subsets of $V$,
\[
\alphamin \cdot \Ent_\mu (f) \le C_{\mathrm{GBF}} \sum_{U \subseteq V} \alpha(U) \E_{\tau \sim \mu_{V \setminus U}} \left[\Ent^\tau_{U} (f^\tau) \right],
\]
where $\alphamin = \min_{v\in V} \sum_{U: v\in U} \alpha(U)$.  
The notion of GBF is closely related to the general block dynamics~\cite{CP20, BCCPSV22,CMT15}. 
Indeed, the following proposition shows that a bound for $C_{\mathrm{GBF}}$ yields a bound for the modified log-Sobolev constant of general block dynamics.
\begin{proposition}[Lemma 2.8 in \cite{BCCPSV22}]
	\label{prop:GBF}
	If the Gibbs distribution $\mu$ of a spin system is totally-connected and satisfies GBF with constant $C_{\mathrm{GBF}}$,
	then the general block dynamics $P_{\mathcal{B},\alpha}$ w.r.t. $(\mathcal{B}, \alpha)$ satisfies relative entropy decay with rate at least $\frac{\alphamin}{C_{\mathrm{GBF}}}$ and satisfies a modified log-Sobolev inequality with constant $\rho(P_{\mathcal{B},\alpha}) \ge \frac{\alphamin}{C_{\mathrm{GBF}}}$.
\end{proposition}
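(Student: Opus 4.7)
The plan is to derive both claims by combining a standard per-block entropy decomposition with the hypothesis GBF; the relative-entropy decay will use the decomposition together with convexity on a fixed-mean slice, while the modified log-Sobolev inequality will follow from the decomposition together with the elementary fact that the heat-bath update on a single block, conditioned on its complement, is a \emph{complete} Markov chain (one jump to stationarity) whose MLSI constant is $1$. The key preliminary identity I would establish is the following tower decomposition: for any $U\subseteq V$, any $f:\Omega\to\mathbb{R}_{\ge0}$, and the heat-bath transition matrix $P_U$ on $U$,
\[
\Ent_\mu(f)\;=\;\Ent_\mu(P_U f)\;+\;\E_{\tau\sim\mu_{V\setminus U}}\!\bigl[\Ent_U^\tau(f^\tau)\bigr].
\]
Since $(P_U f)(\sigma)=\E_{\xi\sim\mu_U^{\sigma_{V\setminus U}}}[f(\sigma_{V\setminus U}\cup\xi)]$ depends only on $\sigma_{V\setminus U}$ and agrees with the conditional expectation $\E_\mu[f\mid\sigma_{V\setminus U}]$, expanding both sides and rearranging reduces the identity to the standard conditioning decomposition of entropy. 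Total-connectedness of $\mu$ guarantees that $P_U^\tau$ is irreducible on each $\Omega_U^\tau$, so every quantity above is well defined.

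For the entropy-decay claim, write $P:=P_{\mathcal{B},\alpha}=\sum_{U\in\mathcal{B}}\alpha(U)\,P_U$ and let $\nu$ be an arbitrary distribution on $\Omega$, setting $f:=d\nu/d\mu$. Markov operators preserve the $\mu$-mean, so every $P_U f$ shares the common mean $\E_\mu[f]$, and on the affine slice of nonnegative functions with fixed mean $\Ent_\mu$ is convex (equivalently, after normalizing $\E_\mu[f]=1$, $\Ent_\mu(f)=\mathcal H(f\mu\mid\mu)$ is convex in its first argument). Jensen's inequality then yields $\Ent_\mu(Pf)\le\sum_{U\in\mathcal{B}}\alpha(U)\,\Ent_\mu(P_U f)$. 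Substituting the per-block identity and applying GBF to the resulting sum gives
\[
\Ent_\mu(Pf)\;\le\;\Ent_\mu(f)-\sum_{U\in\mathcal{B}}\alpha(U)\,\E_{\tau\sim\mu_{V\setminus U}}\!\bigl[\Ent_U^\tau(f^\tau)\bigr]\;\le\;\Bigl(1-\tfrac{\alphamin}{C_{\mathrm{GBF}}}\Bigr)\Ent_\mu(f),
\]
which, translating back through $\mathcal H(\nu\mid\mu)=\Ent_\mu(f)$ and $\mathcal H(\nu P\mid\mu)=\Ent_\mu(Pf)$, is precisely the relative entropy decay \eqref{eq:dec} with rate $r=\alphamin/C_{\mathrm{GBF}}$.

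The MLSI bound I would prove directly via the Dirichlet form, bypassing convexity. Since $\mathcal{E}_{(\cdot)}$ is linear in the transition matrix and $P_U$ only flips coordinates inside $U$, a straightforward computation gives $\mathcal{E}_P(f,\log f)=\sum_U\alpha(U)\,\E_{\tau\sim\mu_{V\setminus U}}[\mathcal{E}_{P_U^\tau}(f^\tau,\log f^\tau)]$, where the conditioned update $P_U^\tau(x,y)=\mu_U^\tau(y)$ is a complete chain on $\Omega_U^\tau$. For any complete chain, direct expansion of the Dirichlet form yields $\mathcal{E}_{P_U^\tau}(g,\log g)=\E_{\mu_U^\tau}[g\log g]-\E_{\mu_U^\tau}[g]\,\E_{\mu_U^\tau}[\log g]\ge\Ent_{\mu_U^\tau}(g)$, the last inequality being Jensen applied to the concave $\log$; this is the statement that the complete-chain MLSI constant equals $1$. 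Assembling these lower bounds and applying GBF once more gives $\mathcal{E}_P(f,\log f)\ge\sum_{U}\alpha(U)\,\E_\tau[\Ent_U^\tau(f^\tau)]\ge(\alphamin/C_{\mathrm{GBF}})\Ent_\mu(f)$, i.e., $\rho(P)\ge\alphamin/C_{\mathrm{GBF}}$.

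The proof is essentially bookkeeping once the per-block tower identity and the complete-chain MLSI are in place, so I anticipate no substantive technical obstacle. The most delicate step is the convexity invocation in the entropy-decay half, which is valid only because all $P_U f$ share the common mean $\E_\mu[f]$; this is why the argument proceeds for general $f\ge 0$ and not only for densities.
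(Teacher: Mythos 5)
Your proof is correct. Note that the paper itself gives no argument for this proposition: it is imported verbatim as Lemma 2.8 of \cite{BCCPSV22}, so there is no ``paper proof'' to match against; your write-up is in effect a self-contained reconstruction of the cited result, and it follows the standard route. All three ingredients check out: the tower identity $\Ent_\mu(f)=\Ent_\mu(P_Uf)+\E_{\tau\sim\mu_{V\setminus U}}[\Ent^\tau_U(f^\tau)]$ is exactly the conditional-entropy decomposition since $P_Uf=\E_\mu[f\mid\sigma_{V\setminus U}]$; the identity $\mathcal H(\nu P\mid\mu)=\Ent_\mu(Pf)$ uses reversibility of the heat-bath block dynamics, which holds; and the complete-chain computation $\mathcal{E}_{P_U^\tau}(g,\log g)\ge\Ent_{\mu_U^\tau}(g)$ is the correct MLSI-constant-one bound. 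Two minor simplifications you could mention: the convexity step needs no restriction to a fixed-mean slice, since $\Ent_\mu$ is convex on all nonnegative functions by the variational formula $\Ent_\mu(f)=\sup\{\E_\mu[fh]:\E_\mu[e^h]\le1\}$ (though your slice argument, using that each $P_Uf$ has mean $\E_\mu[f]$ by stationarity, is equally valid); and the MLSI half could alternatively be deduced from the decay half via the standard implication quoted right after \eqref{eq:dec}, so your direct Dirichlet-form argument buys an independent, slightly more informative derivation rather than being strictly necessary. Total-connectedness plays the role you assign it (well-posedness/ergodicity) and is not otherwise needed for the inequalities.
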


The main theorem of this section is the following; Theorem~\ref{thm:blocks:intro} from the introduction follows as a corollary of this result.
\begin{theorem}
	\label{thm:GBF}
    Let $\eta>0, b>0, \Delta \ge 3$ and $\chi \ge 2$.
    Suppose $G=(V,E)$ is an $n$-vertex graph of maximum degree $\Delta$ and chromatic number $\chi$. Let $\mu$ be a Gibbs distribution of a totally-connected spin system on $G$.
        Let $\mathcal{B} := \{B_1, \dots, B_K\}$ be any collection of blocks such that $V=\cup_i B_i$, and let $\alpha$ be a distribution over $\mathcal{B}$.
    If $\mu$ is $\eta$-spectrally independent and $b$-marginally bounded, 
    then there exists a universal constant $C>1$ such that a general heat-bath block dynamics $P_{\mathcal{B}, \alpha}$ w.r.t. $(\mathcal{B}, \alpha)$ has
	modified log-Sobolev constant:
	\[\rho(P_{\mathcal{B}, \alpha}) = \Omega\left(\frac{\alphamin \cdot b^{6\kappa}}{\chi \cdot (C \Delta (\eta+1)^5 \log n )^{\kappa} \cdot}\right),\]
	where $\kappa = 2 +\lceil\frac{2\eta}{b}\rceil $, and 	
 $$T_{mix}(P_{\mathcal{B}, \alpha}) = O\left( \frac{\chi}{\alphamin} \cdot b^{-6\kappa}  \cdot \big(C (\eta+1)^5  \Delta\log n\big)^{\kappa}    \cdot \log n \right).$$
\end{theorem}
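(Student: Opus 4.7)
The plan is to reduce Theorem~\ref{thm:GBF} to the $k$-partite factorization of entropy (KPF) established in Theorem~\ref{thm:SI2KPF} together with Proposition~\ref{prop:GBF}. First I would fix a proper $\chi$-coloring $V = V_1 \cup \dots \cup V_\chi$ of $G$ and apply~\eqref{eq:SI2KPF2} to obtain
\[
\Ent_\mu(f) \le C_{\mathrm{KPF}}\sum_{j=1}^\chi \E_{\tau\sim\mu_{V\setminus V_j}}[\Ent^\tau_{V_j}(f^\tau)], \qquad C_{\mathrm{KPF}} = \Big(\tfrac{C(\eta+1)^5 \Delta \log n}{b^6}\Big)^\kappa.
\]
This is the main technical input; the rest of the proof is structural.

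The core step is to convert this KPF bound into the general block factorization (GBF) with constant $C_{\mathrm{GBF}} = O(\chi \cdot C_{\mathrm{KPF}})$. For each color class $V_j$, since $V_j$ is an independent set, the conditional distribution $\mu^\tau_{V_j}$ is a product measure, so the heat-bath block dynamics on $V_j$ with block-selection law $\alpha$ satisfies GBF with constant $1$; in our notation this reads
\[
\alphamin \cdot \E_\tau[\Ent^\tau_{V_j}(f^\tau)] \le \sum_U \alpha(U) \E_\tau[\Ent^\tau_U(f^\tau)],
\]
which I would obtain by combining standard tensorization on the product measure $\mu^\tau_{V_j}$ with the monotonicity of conditional entropy in Corollary~\ref{cor:monent}. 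Summing over the $\chi$ color classes and combining with KPF yields GBF for $\mu$ with constant at most $\chi \cdot C_{\mathrm{KPF}}$.

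With GBF in hand, Proposition~\ref{prop:GBF} immediately yields $\rho(P_{\mathcal{B},\alpha}) \ge \alphamin / C_{\mathrm{GBF}} = \Omega\bigl(\alphamin \cdot b^{6\kappa}/(\chi \cdot (C\Delta(\eta+1)^5\log n)^\kappa)\bigr)$, establishing the stated bound on the modified log-Sobolev constant. The mixing time bound then follows from the relative entropy decay consequence of GBF together with~\eqref{eq:tmix}, using $\log\log(1/\mu_{\min}) = O(\log n)$ (which holds for any $b$-marginally bounded distribution on an $n$-vertex graph, since $\mu_{\min} \ge b^n$); this is the origin of the extra $\log n$ factor in the mixing time.

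The main obstacle I anticipate is the intermediate claim $\alphamin \E[\Ent^\tau_{V_j}] \le \sum_U \alpha(U) \E[\Ent^\tau_U]$. The naive route---tensorize $\E[\Ent^\tau_{V_j}] \le \sum_{v\in V_j}\E[\Ent^\tau_{\{v\}}]$ and then apply monotonicity $\E[\Ent^\tau_{\{v\}}] \le \E[\Ent^\tau_U]$ for each $U\ni v$---double-counts across vertices and introduces a spurious factor of $|U|$ in the summation, yielding only $C_{\mathrm{GBF}} \le n\cdot C_{\mathrm{KPF}}$, which is far too weak. The correct step must leverage the product structure of $\mu^\tau_{V_j}$ directly (essentially the fact that a heat-bath block dynamics on a product measure has modified log-Sobolev constant bounded below by its minimum vertex coverage $\alphamin$), rather than collapsing to single-site approximate tensorization, and arranging this estimate cleanly is the point at which care is needed.
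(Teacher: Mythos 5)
Your proposal is correct and follows essentially the same route as the paper: Theorem~\ref{thm:SI2KPF} gives KPF for the $\chi$ color classes, this is converted to GBF with constant $\chi \cdot C_{\mathrm{KPF}}$, and then Proposition~\ref{prop:GBF} and \eqref{eq:tmix} yield the MLSI and mixing time bounds. The intermediate claim you sketch --- and correctly note must be proved via the weighted block factorization of entropy for the product measure $\mu^\tau_{V_j}$ combined with Corollary~\ref{cor:monent}, rather than via single-site tensorization, which would lose a factor of $|U|$ --- is exactly Lemma~\ref{lemma:kpf2gbf}, which the paper invokes as a black box from \cite{BCCPSV22}, so you could simply cite it instead of re-deriving it.
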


\noindent
Theorem~\ref{thm:GBF} follows from the bounds for $C_{\mathrm{KPF}}$ in Theorem~\ref{thm:SI2KPF} and the following lemma from \cite{BCCPSV22} that relates $k$-partite factorization with the general block factorization.

\begin{lemma}[Lemma 3.4, \cite{BCCPSV22}]
	\label{lemma:kpf2gbf}
	Suppose the Gibbs distribution $\mu$ of a spin system on a graph $G$ satisfies $k$-partite factorization of entropy with constant $C_{\mathrm{KPF}}$. 
	Then $\mu$ satisfies GBF with constant $k \cdot C_{\mathrm{KPF}}$.
\end{lemma}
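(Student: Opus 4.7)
The plan is to derive GBF from KPF by reducing, for each independent set $V_j$ appearing on the right-hand side of KPF, the entropy $\E[\Ent^\tau_{V_j}(f^\tau)]$ to a weighted sum of block entropies coming from the given collection $(\mathcal B,\alpha)$. The structural fact driving the argument is that, for any pinning $\tau$ on $V\setminus V_j$, the conditional distribution $\mu^\tau_{V_j}$ is a \emph{product measure} on $V_j$, since $V_j$ is an independent set and every vertex of $V_j$ is thus conditionally independent of the others given $\tau$. Product measures satisfy a particularly clean form of block factorization, which is the engine of the reduction.

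Concretely, after invoking KPF for a partition $V_1\sqcup\cdots\sqcup V_k$ of $V$ into independent sets, the key auxiliary ingredient I would establish is a \emph{general block factorization for product measures with constant $1$}:
\[
\beta_0\,\Ent_\pi(g)\le \sum_{U\subseteq W}\beta(U)\,\E_{\pi_{W\setminus U}}\bigl[\Ent_{\pi_U}(g)\bigr],\qquad \beta_0:=\min_{w\in W}\sum_{U\ni w}\beta(U),
\]
valid for any product measure $\pi=\bigotimes_{w\in W}\pi_w$ and any non-negative weights $\beta$. I would apply this to $\pi=\mu^\tau_{V_j}$ (with $W=V_j$) using weights $\beta(U):=\sum_{B\in\mathcal B:\,B\cap V_j=U}\alpha(B)$, so that for every $v\in V_j$, $\sum_{U\ni v}\beta(U)=\sum_{B\ni v}\alpha(B)\ge\alphamin$. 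Averaging over $\tau$ then collapses the iterated pinnings into a single pinning on $V\setminus(B\cap V_j)$, giving
\[
\alphamin\,\E_{\tau\sim\mu_{V\setminus V_j}}\bigl[\Ent^\tau_{V_j}(f^\tau)\bigr]\le \sum_{B\in\mathcal B}\alpha(B)\,\E_{\tau'\sim\mu_{V\setminus(B\cap V_j)}}\bigl[\Ent^{\tau'}_{B\cap V_j}(f^{\tau'}_{B\cap V_j})\bigr].
\]
Enlarging $B\cap V_j\subseteq B$ by the monotonicity of entropy in Corollary~\ref{cor:monent} replaces the right-hand side with $\sum_B\alpha(B)\,\E_{\tau\sim\mu_{V\setminus B}}[\Ent^\tau_B(f^\tau)]$. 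Summing over $j=1,\dots,k$ introduces a single factor of $k$, and combining with KPF yields $\alphamin\,\Ent_\mu(f)\le k\,C_{\mathrm{KPF}}\sum_B\alpha(B)\,\E[\Ent^\tau_B(f^\tau)]$, which is GBF with constant $k\cdot C_{\mathrm{KPF}}$.

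The main obstacle is the auxiliary product-measure factorization, because the naive route through approximate tensorization would lose a factor of the maximum block size rather than the target $k$. I would prove it by induction on $|W|$: pick any $v\in W$, split the blocks into those containing $v$ and those not, and apply the chain rule $\Ent_{\pi_B}(g)=\E_{\pi_{B\setminus v}}[\Ent_{\pi_v}(g)]+\Ent_{\pi_{B\setminus v}}(\E_{\pi_v}[g])$ when $v\in B$, along with the Jensen bound $\E[\Ent_{\pi_B}(g)]\ge \E[\Ent_{\pi_B}(\E_{\pi_v}[g])]$ when $v\notin B$. These operations extract a $p_v\,\E[\Ent_{\pi_v}(g)]$ term together with a reduced problem on $W\setminus\{v\}$ for the function $\E_{\pi_v}[g]$ and effective weights $\tilde\beta(U):=\beta(U)+\beta(U\cup\{v\})$. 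A short bookkeeping check shows that $\sum_{U\ni u}\tilde\beta(U)=p_u$ for every $u\ne v$, so the reduced coverage $\tilde\beta_0\ge\beta_0$ and the inductive hypothesis applies; combining its output with the global chain rule $\Ent_\pi(g)=\E[\Ent_{\pi_v}(g)]+\Ent_{\pi_{W\setminus v}}(\E_{\pi_v}[g])$ and using $p_v\ge\beta_0$ closes the induction.
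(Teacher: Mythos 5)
Your argument is correct. Note that the paper does not prove this lemma at all---it is imported verbatim from \cite{BCCPSV22}---so there is no in-paper proof to compare against; what you have written is essentially a self-contained reconstruction of the standard argument behind that citation: conditioned on any pinning of $V\setminus V_j$, the measure on the independent set $V_j$ is a product measure (true for the nearest-neighbor systems of Definition~\ref{def:spin-system}, even with hard constraints), product measures satisfy the weighted block factorization with constant $1$ (your Shearer-type induction is sound, including the bookkeeping $\sum_{U'\ni u}\tilde\beta(U')=p_u$ and the Jensen step for blocks not containing $v$), the tower property collapses the iterated pinnings, Corollary~\ref{cor:monent} enlarges $B\cap V_j$ to $B$, and summing over $j$ costs exactly the factor $k$. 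This matches the route taken in \cite{BCCPSV22} (which in turn leans on the product-measure factorization of \cite{CP20}), so there is nothing to flag beyond confirming correctness.
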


\begin{proof} [Proof of Theorem~\ref{thm:GBF}]
The lower bounds for the entropy decay rate and MLSI constant follow from Theorem~\ref{thm:SI2KPF}, Lemma~\ref{lemma:kpf2gbf} and Proposition~\ref{prop:GBF}, and by \eqref{eq:tmix} we obtain the desired upper bound for mixing time.   
\end{proof}

We also obtain the following corollary for the ferromagnetic Ising and Potts model. 

\begin{corollary}\label{cor:GBF}
Let $\delta\in(0,1)$ and $\Delta \ge 3$.
	For the Ising model with $\beta \in[ (1-\delta)\bar{\beta}_u(\Delta), (1-\delta)\beta_u(\Delta)]$ 
	on any graph $G$ of maximum degree $\Delta$ and chromatic number $\chi$, 
	or the ferromagnetic $q$-state Potts model with $q\ge 2$ and 
	$0 < \beta \le \frac{2(1-\delta)}{\Delta}$ on the same graph, 
	$$T_{mix}(P_{\mathcal{B,\alpha}}) = O\left( \frac{\chi}{\alphamin} \right) \cdot O\left(\frac{\Delta}{\delta}\right)^{2+O(1/\delta)} \cdot \left( \log n\right)^{3+O(1/\delta)}.$$
\end{corollary}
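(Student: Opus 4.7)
The plan is to derive Corollary~\ref{cor:GBF} as a direct consequence of Theorem~\ref{thm:GBF} by verifying the spectral independence constant $\eta$ and the marginal lower bound $b$ for each of the two model families, and then reading off the mixing time bound. Theorem~\ref{thm:GBF} gives
$$T_{mix}(P_{\mathcal{B},\alpha}) = O\!\left(\frac{\chi}{\alphamin} \cdot b^{-6\kappa} \cdot \bigl(C(\eta+1)^5 \Delta \log n\bigr)^{\kappa} \cdot \log n\right),$$
with $\kappa = 2 + \lceil 2\eta/b\rceil$. Thus it suffices to exhibit $\eta = O(1/\delta)$ and $b = \Omega(1)$ in both parameter regimes, since this forces $\kappa = 2 + O(1/\delta)$ and the stated bound drops out by substitution.

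For the Ising model in the regime $\beta \in [(1-\delta)\bar{\beta}_u(\Delta),(1-\delta)\beta_u(\Delta)]$, I would split on the growth of $\Delta$. When $\Delta = O(1)$, Proposition~\ref{prop:simb} directly gives $O(1/\delta)$-spectral independence and $\Omega(1)$-bounded marginals. When $\Delta \to \infty$, Proposition~\ref{prop:Dobrushin} yields $\|A\| \le 1 - \delta/2$, and combining with Proposition~\ref{prop:D2SI} produces $\eta \le 4/\delta$; the marginal bound follows because under uniqueness $|\beta|\Delta = O(1)$ (since $\beta_u(\Delta), |\bar{\beta}_u(\Delta)| = O(1/\Delta)$), so for every boundary condition $\tau_{N(v)}$ we have $\ising(\sigma_v = \pm 1 \mid \tau_{N(v)}) \ge (1+e^{2|\beta|\Delta})^{-1} = \Omega(1)$. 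For the ferromagnetic Potts model with $0 < \beta \le 2(1-\delta)/\Delta$, the argument from the proof of Corollary~\ref{cor:SW} shows $\|A\| \le \beta\Delta/2 \le 1-\delta$, so Proposition~\ref{prop:D2SI} gives $\eta = 2/\delta$; the same proof shows $b \ge e^{-\beta\Delta}/q \ge 1/(qe^2) = \Omega(1)$ when $q$ is treated as a constant.

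With $\eta = O(1/\delta)$ and $b = \Omega(1)$ in hand, I plug into Theorem~\ref{thm:GBF}: $b^{-6\kappa} = O(1)^{\kappa}$, $(\eta+1)^{5\kappa} = \delta^{-O(1/\delta)}$, and the final expression becomes
$$T_{mix}(P_{\mathcal{B},\alpha}) = O\!\left(\frac{\chi}{\alphamin}\right) \cdot O\!\left(\frac{\Delta}{\delta}\right)^{\kappa} \cdot (\log n)^{\kappa+1}$$
with $\kappa = 2 + O(1/\delta)$. This matches the stated bound $O(\Delta/\delta)^{2+O(1/\delta)} (\log n)^{3+O(1/\delta)}$ once all polynomial-in-$\delta^{-1}$ factors (from $(\eta+1)^{5\kappa}$ and from the constant $C$) are absorbed into the $O(1/\delta)$ in the exponent of $\Delta/\delta$. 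The only ``obstacle'' is therefore purely bookkeeping, namely aligning the exponents of $\Delta/\delta$ and $\log n$ after folding in the $b^{-6\kappa}$ and $(\eta+1)^{5\kappa}$ prefactors; there is no new technical content beyond invoking the spectral independence, Dobrushin, and marginal-bound results already established earlier in the paper.
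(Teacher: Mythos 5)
Your proposal is correct and takes essentially the same route as the paper: verify $\eta = O(1/\delta)$ and $b = \Omega(1)$ for both models (via Proposition~\ref{prop:simb} for Ising and the argument in the proof of Corollary~\ref{cor:SW} for Potts), note $\kappa = 2 + O(1/\delta)$, and plug into Theorem~\ref{thm:GBF}. Your extra case split on bounded versus unbounded $\Delta$ for the Ising model is harmless but unnecessary, since Proposition~\ref{prop:simb} already gives the needed bounds for all $\Delta$.
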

\begin{proof}[Proof of Corollary~\ref{cor:GBF}]
    We have shown in the proof of Corollary~\ref{cor:SW} that, for the ferromagnetic $q$-state Potts model
    when $\beta$ is such that
    $0 < \beta \le \frac{2(1-\delta)}{\Delta}$, then $b=O(1)$ and $\eta = O(1/\delta)$. 
    For the Ising model, we achieve the same bound by Proposition~\ref{prop:simb}.
    Now 
    $\kappa = 2 +\lceil\frac{2\eta}{b}\rceil = 2+O(1/\delta)$,
    and the mixing time bound follows from Theorem~\ref{thm:GBF}.
\end{proof}

\bibliographystyle{alpha}
\bibliography{SSM}

\appendix

\section{Proof of the second part of Theorem~\ref{thm:SI2KPF}}
\label{sec:SI2KPF1}
In this appendix, we prove \eqref{eq:SI2KPF1} in Theorem~\ref{thm:SI2KPF}, which begins by extrapolating the proof of Lemma 3.3 in \cite{BCCPSV22} as Lemma~\ref{lemma:proofstep1}.
\begin{lemma} [\cite{BCCPSV22}]
	\label{lemma:proofstep1}
	Let $\theta \in (0,1]$ and $n \ge \frac{2}{\theta} (\frac{4\eta}{b^2} + 1)$.
	Let $G, \mu$, $V_1, \dots, V_k$ be as in the assumption of Theorem~\ref{thm:SI2KPF}.
	Let $S$ be a uniformly generated block of vertices of size $\lceil\theta n\rceil$, 
	and let $S_1, \dots, S_m$ be the connected components of ~$S$.
 Recall that 
 $C_S(v)$ denotes the unique connected component $S_i$ in $S$ that contains $v$ if such a component exists, otherwise set it to be the empty set.
	Suppose further that for $S_i \subseteq S$, 
	$\Gamma(S_i)$ takes the minimum value such that
 the following inequality holds 
 for an arbitrary pinning $\tau\in\Omega_{S_i^c}$ and any function $g:\Omega_{S_i}^\tau\rightarrow \mathbb{R}_{\ge0}$:
 \[
 		\Ent^{\tau}_{S_i}(g)\le \Gamma(S_i) \sum_{j=1}^k \E_{\xi\sim \mu_{S_i \setminus V_j}^\tau}\left[\Ent_{V_j\cap S_i}^{\xi \cup \tau} (g^{\xi}_{S_i\cap V_j})\right].
 \]
	Then, 
	\begin{equation}
		\label{eq:proofstep16}
		\Ent_\mu(f) \le \frac{C_{\mathrm{UBF}}}{\theta} \sum_{j=1}^k \E_{\tau\sim \mu}\left[\Ent_{V_j}^\tau (f) \right] \cdot G_j,
	\end{equation}
	where 
	\[G_j := \max_{W\subset V_j}\max_{v\in W} \E_S\left[\Gamma(C_S(v)) \mid V_j \cap S = W\right]\] 
	and 
	the expectation $\E_S$ is taken over the uniform generation of $S$.
\end{lemma}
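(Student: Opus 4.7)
The plan is to adapt the proof of \eqref{eq:SI2KPF2}, reusing $\lceil\theta n\rceil$-UBF from Theorem~\ref{thm:SI2UBF} and the chain of manipulations in Lemma~\ref{lemma:proofstep3}, but refusing to collapse the per-component constants $\Gamma(S_i)$ into a single worst case $\max_i \Gamma(S_i)$. Instead, I exploit the randomness of $S$ more finely by conditioning on $W:=V_j\cap S$ and invoking the definition of $G_j$ vertex by vertex.

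First I invoke Theorem~\ref{thm:SI2UBF} (valid since $n\ge \tfrac{2}{\theta}(\tfrac{4\eta}{b^2}+1)$) to obtain
\[
\Ent_\mu(f) \le \frac{C_{\mathrm{UBF}}}{\theta}\cdot \E_S\bigl[\E_{\tau\sim\mu_{V\setminus S}}[\Ent^\tau_S(f^\tau)]\bigr].
\]
For each realization of $S$, the measure $\mu^\tau_S$ is a product $\otimes_i\mu^\tau_{S_i}$ over connected components. Combining Lemma~\ref{lemma:cprent} and Lemma~\ref{lemma:lote} (to split the entropy into a sum over components), the hypothesis on $\Gamma(S_i)$, and the same sequence of swaps used to prove Lemma~\ref{lemma:proofstep3}, I obtain for each fixed $j$ a representation
\[
\sum_{i=1}^{m}\Gamma(S_i)\cdot E_{i,j}(S),
\]
where the nonnegative quantities $E_{i,j}(S)$ vanish whenever $V_j\cap S_i=\emptyset$ and satisfy the telescoping bound $\sum_i E_{i,j}(S)\le \E_{\tau\sim\mu_{V\setminus V_j}}[\Ent^\tau_{V_j}(f^\tau)]$ by Corollary~\ref{cor:monent}.

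Next, using the identity $\Gamma(S_i)=\Gamma(C_S(v))$ for every $v\in V_j\cap S_i$, I redistribute the per-component weight to its $V_j$-vertices by writing
\[
\sum_{i=1}^{m}\Gamma(S_i)\,E_{i,j}(S) \;=\; \sum_{v\in V_j\cap S}\Gamma(C_S(v))\,\omega_v(S),\qquad \omega_v(S):=\frac{E_{i(v),j}(S)}{|V_j\cap S_{i(v)}|},
\]
so that $\sum_{v\in V_j\cap S}\omega_v(S)\le \E_\tau[\Ent^\tau_{V_j}(f^\tau)]$. Then, conditioning on $W=V_j\cap S$ and applying the pointwise bound $\E_S[\Gamma(C_S(v))\mid W]\le G_j$ for each $v\in W$, I obtain
\[
\E_S\Bigl[\sum_{v\in W}\Gamma(C_S(v))\,\omega_v(S)\,\Big|\,W\Bigr] \le G_j\cdot \E_\tau[\Ent^\tau_{V_j}(f^\tau)].
\]
Taking the unconditional expectation and substituting back into the UBF bound recovers \eqref{eq:proofstep16}.

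The main obstacle is the last display: the quantities $\Gamma(C_S(v))$ and $\omega_v(S)$ are coupled through the random component structure of $S$, so pulling $\Gamma(C_S(v))$ out of the conditional expectation is not immediate. The resolution is a monotonicity step (Corollary~\ref{cor:monent} applied one vertex at a time) that upper-bounds $\omega_v(S)$ by a quantity measurable with respect to $W$ alone, while preserving the telescoping inequality $\sum_{v\in W}\omega_v\le \E_\tau[\Ent^\tau_{V_j}(f^\tau)]$. Once the weights become $W$-measurable, the definition of $G_j$ applies pointwise and the bound follows. This is the ``slightly different argument'' alluded to in the excerpt, and it essentially carries over the proof of Lemma~3.3 of~\cite{BCCPSV22} while tracking the extra conditioning on $V_j\cap S$.
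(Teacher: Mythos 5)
Your opening steps are fine and simply mirror what the paper already does for \eqref{eq:SI2KPF2}: the UBF step, the decomposition over connected components via Lemma~\ref{lemma:cprent} and Lemma~\ref{lemma:lote}, the application of the $\Gamma(S_i)$ hypothesis, and the telescoping bound $\sum_i E_{i,j}(S)\le \E_{\tau\sim\mu_{V\setminus V_j}}[\Ent^\tau_{V_j}(f^\tau)]$ are all as in the proof of Lemma~\ref{lemma:proofstep3}. But note that the paper does not prove Lemma~\ref{lemma:proofstep1} at all; it imports it from Lemma~3.3 of \cite{BCCPSV22}, and the entire nontrivial content of that lemma is exactly the step you leave open: decoupling the random weights $\Gamma(C_S(v))$ from the entropy terms so that, after conditioning on $W=V_j\cap S$, only the quantity $\E_S[\Gamma(C_S(v))\mid V_j\cap S=W]\le G_j$ appears. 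Declaring that this follows from ``a monotonicity step (Corollary~\ref{cor:monent} applied one vertex at a time)'' is not a proof; it is a restatement of the difficulty.

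Concretely, the fix you sketch does not go through as described. The weights $\omega_v(S)=E_{i(v),j}(S)/|V_j\cap S_{i(v)}|$ depend on $S$ through the grouping of $W$ into components and through the order in which the components are processed: in $E_{i,j}(S)$ the $W$-vertices lying in earlier components are averaged inside $f$ while those in later components are pinned, and this pattern is exactly the extra (non-$W$-measurable) randomness that is correlated with $\Gamma(C_S(v))$. Corollary~\ref{cor:monent} (and Lemma~\ref{lemma:cprent}(2)) only compare nested blocks under compatible conditionings, and they push in one direction only (pinning more can only increase the term); they do not produce per-vertex, $W$-measurable pointwise majorants of an arbitrary equal split of a block entropy. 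Moreover, any such majorization must still keep the constant-$1$ summability $\sum_{v\in W}\tilde\omega_v(W)\le \E_{\tau\sim\mu_{V\setminus V_j}}[\Ent^\tau_{V_j}(f^\tau)]$, which rules out the natural candidates: replacing the per-vertex terms by fully pinned single-site entropies destroys the constant (for a conditionally product measure on $V_j$ the sum of single-site conditional entropies can strictly exceed the block entropy, e.g.\ $f=\1(x_1=x_2)$ on two independent fair bits), while keeping the telescoping terms of a fixed enumeration of $V_j$ forces comparisons that go in the wrong monotone direction whenever the components interleave in that enumeration. So the crucial inequality $\E_S[\sum_{v\in W}\Gamma(C_S(v))\,\omega_v(S)\mid W]\le G_j\,\E_{\tau}[\Ent^\tau_{V_j}(f^\tau)]$ is not established, and with it the lemma; you would need to reproduce (or cite) the actual argument of Lemma~3.3 in \cite{BCCPSV22}, which is designed precisely to make the per-vertex entropy weights independent of the component structure before the conditional expectation of $\Gamma(C_S(v))$ is taken.
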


\begin{proof} [Proof of \eqref{eq:SI2KPF1} in Theorem~\ref{thm:SI2KPF}]
In the same way that we prove \eqref{eq:SI2KPF2}, if $\Delta^2 > \frac{b^4n}{10e(4\eta+b^2)}$ then it follows from Lemma~\ref{lemma:CGSVgap} and Lemma~\ref{lemma:gap2KPF} that
\[
C_{\mathrm{KPF}}
\le \frac{3(\lceil 2\eta \rceil + 2)^{4\kappa}}{(2b^{4})^{\kappa}} \cdot   \big(\frac{10e(4\eta + b^2)}{b^2}\big)^{\kappa} \cdot 
\Delta^{2\kappa} \le \frac{(240e)^{4\kappa} \cdot (\lceil \eta \rceil + 1)^{5\kappa} \cdot \Delta^{2\kappa}}{b^{6\kappa}}.
\]
Now we assume $\Delta^2 \le \frac{b^2n}{10e(4\eta+b^2)}$.
Take $\theta = \frac{1}{5e\Delta^2} \ge \frac{2(4\eta + b^2)}{b^2 n} = \frac{2}{n} \cdot (\frac{4\eta}{b^2} + 1)$.
Theorem \ref{thm:SI2UBF} implies that 
\begin{equation}\label{eq:UBF3}
C_{\mathrm{UBF}}=\left(\frac{e}{\theta}\right)^{\lceil\frac{2\eta}{b}\rceil} = 
\Big(5e^2\Delta^2\Big)^{\lceil\frac{2\eta}{b}\rceil}. 
\end{equation}
Given Lemma~\ref{lemma:proofstep1}, 
to show \eqref{eq:SI2KPF1} it remains to provide an upper bound 
$G_j$ for each $j$.
There are two main steps for proving this bound.
First, we upper bound $G_j$ in terms of the size of connected components in $S$.
Under the assumptions of Theorem \ref{thm:SI2KPF}, 
$\mu$ is $\eta$-spectrally independent and $b$-marginally bounded.
These properties by definition preserve under any pinning.
In particular, 
for any $S_i \subseteq S$ and an arbitrary pinning $\tau\in\Omega_{V\setminus S_i}$,
$\mu_{S_i}^\tau$ is still $\eta$-spectrally independent and $b$-marginally bounded.
	Thus, 
 Lemma~\ref{lemma:CGSVgap} and Lemma~\ref{lemma:gap2KPF} imply that 
   \[
    \Gamma(S_i) \le \frac{3(\lceil 2\eta \rceil + 2)^{4\kappa}}{(2b^{4})^{\kappa}} \cdot |S_i|^\kappa,
    \] 
	and letting $\tilde{b} :=  \frac{3(\lceil 2\eta \rceil + 2)^{4\kappa}}{(2b^{4})^{\kappa}}$ we have
	\begin{equation}
		\label{eq:Gj}
		G_j	\le \tilde{b}\max_{W\subset V_j}\max_{v\in W} \E_S\left[ |C_S(v)|^{\kappa}\mid V_j \cap S = W\right].
	\end{equation}

	The second part of this proof analyzes the conditional expectation term above on the right-hand side of \eqref{eq:Gj}.
	We fix $v\in V$ (and hence fix $V_j$) and fix a feasible $W$ such that $v\in W \subseteq V_j$ and $|W| \le \lceil\theta n\rceil$.
	We say a set $T\subseteq V\setminus V_j$ is \emph{$W$-connected} if $T\cup W$ is connected in $G$, and we denote by
	$S'(v)$ the unique $W$-connected vertex-set in $S$ that is adjacent to $v$, if such set exists, otherwise an empty set.
         Clearly if $S'(v) = \emptyset$, then $C_S(v) = \{v\}$.
        Suppose $S'(v) \neq \emptyset$.
	Observe that $C_S(v) = S'(v) \cup (C_S(v) \cap W)$.
        Since $(C_S(v) \cap W)$ must be adjacent to 
        $S'(v)$ if $S'(v) \neq \emptyset$, 
        $ |C_S(v) \cap W| \le \Delta \cdot |S'(v)|$.
        Hence, $|C_S(v)|\le (\Delta + 1)|S'(v)|$.
        
	Furthermore, let $G_2 := (V, E\cup E_2)$, where $E_2$ is the set of pairs of vertices that are of distance at most $2$ in $G$.
 Note that the degree of any vertex in $G_2$ is at most $\Delta^2$. 
Let $C_{S_2}(v)$ be the unique connected component in $G_2[S]$ that contains $v$.
Notice that the set $S'(v)$ is always a subset of $C_{S_2}(v)$, regardless of the specific set $W$ we choose to fix.
        Hence, for any $x$, 
        \[
            {\Pr}_S\left[ |C_S(v)| \ge x \mid V_j \cap S = W  \right] \le 
            {\Pr}_S\left[ |S'(v)| \ge \frac{x}{\Delta+1} \mid V_j \cap S = W \right] \le
            {\Pr}_{S}\left[ |C_{S_2}(v)| \ge \frac{x}{\Delta+1} \right].
        \]
	Now we apply Lemma~\ref{lemma:g1} to estimate the last probability.
 For $\theta < \frac{1}{4e\Delta^2}$,
 \begin{align*}
 {\Pr}_{S}\left[ |C_{S_2}(v)| \ge \frac{x}{\Delta+1} \right] &\le 
 \frac{\lceil \theta n \rceil}{n}\sum_{k=0}^\infty (2e\Delta^2 \theta)^{\left\lfloor \frac{x}{\Delta+1}\right\rfloor + k - 1}\\
 &\le \frac{1}{2e\Delta^2} \left(\frac{1}{2}\right)^{\left\lfloor \frac{x}{\Delta+1}\right\rfloor} \cdot\sum_{k=0}^\infty \frac{1}{2} \\
 &\le \frac{1}{\Delta^2} \cdot 2^{-\frac{x}{\Delta+1}}.
\end{align*}
 Hence, we obtain 
\begin{align*}
    \E_S\left[ |C_S(v)|^{\kappa}\mid V_j \cap S = W\right] 
    &\le \sum_{x=1}^n x^{\kappa }  {\Pr}_S\left[ |C_S(v)| \ge x \mid V_j \cap S = W  \right]\\
    &\le \sum_{x=1}^n x^{\kappa }  {\Pr}_{S}\left[ |C_{S_2}(v)| \ge \frac{x}{\Delta+1} \right]\\
    &\le  \sum_{x=1}^n x^{\kappa} \cdot  \frac{1}{\Delta^2} \cdot 2^{-\frac{x}{\Delta+1}}\\
    &\le 4 \Delta^{2\kappa}.
\end{align*}
	Therefore, 
	$
		G_j \le 4\tilde{b} \Delta^{2\kappa}.
	$
	This bound on $G_j$ together with \eqref{eq:proofstep16} and \eqref{eq:UBF3} implies 
	\[C_{\mathrm{KPF}}\le  4\tilde{b} \Delta^{2\kappa}\cdot\Big(5e^2\Delta^2\Big)^{\kappa}
    =  \frac{12(\lceil 2\eta \rceil + 2)^{4\kappa}}{(2b^{4})^{\kappa}}\cdot\Big(5e^2\Delta^2\Big)^{\kappa}
    \cdot \Delta^{2\kappa}
 ,\] 
	concluding the proof.
\end{proof}

\section{Additional proofs}
\label{sec:addproofs}
\begin{proof} [Proof of Lemma~\ref{lemma:CGSVgap}]
Let $\eta_0,\eta_1,\dots,\eta_{n-2}$ be a sequence of reals.
We say a distribution $\mu$ is $(\eta_0, \eta_1, \dots, \eta_{n-2})$-spectrally independent if for every $0\le k \le n-2$, 
any $\Lambda \subseteq V$ of size $k$ and any pinning $\tau$ on $\Lambda$,
$
\lambda_1(\Psi^\tau_\mu) \le \eta_k.
$
    Theorem 6 and 8 from \cite{CGSV21}\footnote{Originally these theorems are given for coloring, but their proofs naturally extend to general totally-connected distributions.} state that 
    if $\mu$ is $(\eta_0, \eta_1, \dots, \eta_{n-2})$-spectrally independent,
    then the spectral gap of the Glauber dynamics is at least
\begin{equation}
\label{eq:si2gap}
   \frac{1}{n}\prod_{k=0}^{n-2} \left( 1 - \frac{\eta_k}{n-k-1} \right).
\end{equation}
We complete the proof by establishing suitable bounds for each $\eta_k$.
Per Definition~\ref{def:si}, we have 
$\eta_k \le \eta$ for all $k\in [0, n-2]$.
In addition, we will show that 
\begin{equation}
    \label{eq:crudesi}
        \eta_k \le (n - k - 1)\cdot \left( 1 - \frac{2b^4}{(n-k)^4} \right).
\end{equation}
As such, we will have that $\eta_k \le \min\{\eta, (n - k - 1)\cdot ( 1 - \frac{2b^4}{(n-k)^4} ) \}$,
and we would finish the proof of Lemma~\ref{lemma:CGSVgap} by plugging these bounds for $\eta_k$ into \eqref{eq:si2gap}:
\begin{align*}
   \frac{1}{n}\prod_{k=0}^{n-2} \left( 1 - \frac{\eta_k}{n-k-1} \right)
   &\ge    \frac{1}{n}\prod_{k=0}^{n-2} \left( 1 -  \min\big\{\frac{\eta}{n-k-1},  1 - \frac{2b^4}{(n-k)^4}  \big\} \right) 
   =  \frac{1}{n}\prod_{k=1}^{n-1} \left( 1 -  \min\big\{\frac{\eta}{k},  1 - \frac{2b^4}{(k+1)^4}  \big\} \right) \\
   & \ge  \frac{1}{n} \left(\prod_{k=1}^{\lceil2\eta\rceil+1}\frac{2b^4}{(k+1)^4}  \right)
   \left(\prod_{k=\lceil2\eta\rceil+2}^{n-1} \big(1 - \frac{\eta}{k}\big) \right) 
    \ge  \frac{1}{n} \left(\frac{2b^4}{(\lceil2\eta\rceil + 2)^4}  \right)^{\lceil2\eta\rceil + 1}
   \cdot \exp\left(-\sum_{k=\lceil2\eta\rceil + 2}^{n-1} \frac{2\eta}{k} \right)\\
   &  \ge  \frac{1}{n} \left(\frac{2b^4}{(\lceil2\eta\rceil + 2)^4}  \right)^{\lceil2\eta\rceil + 1}
   \cdot \exp\left(-2\eta \ln n\right) \ge \left(\frac{2b^4}{(\lceil2\eta\rceil +2)^4} \cdot \frac{1}{n} \right)^{1+\lceil2\eta\rceil}.
\end{align*}
Now we provide a proof for \eqref{eq:crudesi}.
Let $\tau$ be a pinning on $\Lambda$ with $|\Lambda| = k$, and let $U = V\setminus \Lambda$.
Theorem 8 of \cite{CGSV21} shows that 
\[
\lambda_1(\Psi^\tau_U) = (n - k -1)\cdot \lambda_2(\hat{P}_\tau),
\]
where $\hat{P}_\tau$ denotes the transition matrix of the \emph{local random walk} on $\mathcal P^\tau:=\{(u, s): u\notin \Lambda, s\in \Omega^\tau_u \}$ whose entries are given by
$\hat{P}_\tau((u,a), (v,b)):=\frac{\mathbbm{1}[u\neq v]}{n-k-1}\cdot\mu^{\tau \cup (u,a)}_{U\setminus \{u\}}(\sigma_v = b)$.
Let $\pi^\tau$ be a distribution on $\mathcal P^\tau$ given by $\pi^\tau(u,s) = \frac{1}{n - k}\cdot  \mu^{\tau}(\sigma_u = s)$. It is straightforward to verify that $\hat{P}_\tau$ is reversible with respect to $\pi^\tau$.
By the standard relationship between conductance and the eigenvalue of a reversible transition matrix in \cite{JS89}, we have
\[
1- \lambda_2(\hat{P}_\tau) \ge \frac{\Phi^2}{2},\] 
where
\[
\Phi:=\min_{S\subseteq \mathcal P^\tau: S\neq \emptyset, \pi^\tau(S)\le 1/2} \Phi_S,~\text{and}\quad
\Phi_S:=\frac{1}{\pi^\tau(S)} \sum_{x\in S} \sum_{y\notin S}\pi^\tau(x) \hat{P}_\tau(x, y).
\]
As $\mu$ is totally-connected, for any $S\subseteq \mathcal P^\tau$ such that $S\neq \emptyset$ and  $\pi^\tau(S)\le 1/2$, there exist $x\in S$ and $y \notin S$
such that $\hat{P}_\tau(x, y) > 0$.
Also, since $\mu$ is $b$-marginally bounded, we have $\pi^\tau(x) \ge b/(n-k)$ and $\hat{P}_\tau(x, y) \ge b/(n-k-1)$.
Hence, 
\[
\Phi \ge 2\min_{S\subseteq \mathcal P^\tau, \pi^\tau(S)\le 1/2} \min_{x\in S, y\notin S: \hat{P}_\tau(x, y) > 0} \pi^\tau(x) \hat{P}_\tau(x, y) \ge 2\cdot\frac{b}{n-k}\cdot \frac{b}{n-k-1} \ge \frac{2b^2}{(n-k)^2}.
\]
It follows that
\[
\frac{\lambda_1(\Psi^\tau_U)}{n-k-1} = 1 - \gap(\hat{P}_\tau) \le 1 - \frac{\Phi^2}{2} \le 1 - \frac{2b^4}{(n-k)^4},
\]
which establishes \eqref{eq:crudesi}.
\end{proof}
\begin{proof} [Proof of Lemma~\ref{lemma:gap2KPF}]
We say that $\mu$ satisfies the log-Sobolev inequality with constant $\rho_1$ if for all 
functions $f : \Omega \rightarrow \mathbb{R}_{\ge0}$, 
\begin{equation*}
\rho_1\Ent_\mu (f) \le  \frac{1}{n}\sum_{v \in V}
\E_{\tau \sim \mu_{V \setminus \{v\}}} 
\left[\var^\tau_{v} (\sqrt{f^\tau}) \right].
\end{equation*}
Recall that $C_{\mathrm{AT}}$ is the least constant such that for all functions $f : \Omega \rightarrow \mathbb{R}_{\ge0}$, 
\begin{equation*}
\Ent_\mu (f) \le C_{\mathrm{AT}} \sum_{v \in V}
\E_{\tau \sim \mu_{V \setminus \{v\}}} 
\left[\Ent^\tau_{v} (f^\tau) \right].
\end{equation*}
    Proposition 1.1 from \cite{CMT15} implies that 
    \begin{equation}
        \label{eq:at2mlsi}
        C_{AT} \le \frac{1}{\rho_1 n} 
    \end{equation}
    Moreover, \cite{DS96} shows that
    \begin{equation}
        \label{eq:lsi2gap}
        \frac{1-2\mu_{min}}{\log(1/\mu_{min} - 1)} \cdot \gamma
	\le \rho_1.
    \end{equation}
    If $\mu_{min} > 1/3$, then $\mu$ is a trivial distribution and $C_{AT} \le 1$.
    Thus, we may assume that $\mu_{min} \le 1/3$.
    Since $\mu$ is $b$-marginally bounded, we have
    \begin{equation}
    \label{eq:mlsi2gapb}
            \frac{1-2\mu_{min}}{\log(1/\mu_{min} - 1)} \ge \frac{1}{3\log(1/\mu_{min})}
            \ge \frac{1}{3n \log(b^{-1})}.
    \end{equation}
    It follows from \eqref{eq:at2mlsi}, \eqref{eq:lsi2gap} and \eqref{eq:mlsi2gapb} that 
    \begin{equation}
        \label{eq:at2gapb}
            C_{AT} \le \frac{3\log (b^{-1})}{\gamma}.
    \end{equation}
	 Observe that by Corollary~\ref{cor:monent}, if $v\in B$, then
	 $$\E_{\tau\sim \mu_{V\setminus\{v\}}} 
	\left[\Ent_{v}^{\tau} (f^{\tau}) \right] \le \E_{\tau\sim \mu_{ V\setminus B}} 
	\left[\Ent_{B}^{\tau} (f^\tau) \right].$$
 Hence, given $k$ disjoint independent sets $ U_1, \dots, U_k$ of $V$ such that
$\bigcup_{i=1}^k U_i = V$, we have 
		\begin{align}
			 \sum_{v \in V}
\E_{\tau \sim \mu_{V \setminus \{v\}}} 
\left[\Ent^\tau_{v} (f^\tau) \right]
            = \nonumber
			\sum_{j=1}^k \sum_{v\in U_j} \E_{\tau \sim \mu_{V \setminus \{v\}}} 
\left[\Ent^\tau_{v} (f^\tau) \right]
			\le n	\sum_{j=1}^k \E_{\tau \sim \mu_{V \setminus U_j}} 
\left[\Ent^\tau_{v} (f^\tau) \right].
		\end{align}
Equivalently, we obtain that 
\begin{equation}
    \label{eq:AT2KPF}
    C_{KPF} \le n \cdot C_{AT}.
\end{equation}
By \eqref{eq:at2gapb} and \eqref{eq:AT2KPF}, we establish the lemma.
\end{proof}

\end{document}